\numberwithin{equation}{section}
\theoremstyle{plain}
\newtheorem{theorem}{Theorem}
\newtheorem{corollary}[theorem]{Corollary}
\newtheorem{lemma}[theorem]{Lemma}
\newtheorem{proposition}[theorem]{Proposition}
\newtheorem{remark}[theorem]{Remark}
\newtheorem{assumption}[theorem]{Assumption}
\newcommand{\R}{\mathbb{R}}
\newcommand{\beq}{\begin{eqnarray}}
\newcommand{\eeq}{\end{eqnarray}}
\newcommand{\beqs}{\begin{eqnarray*}}
\newcommand{\eeqs}{\end{eqnarray*}}
\newcommand{\bigO}{\mathcal{O}}
\newcommand{\E}{\mathbb{E}}
\def\munderbar#1{\underline{\sbox\tw@{$#1$}\dp\tw@\z@\box\tw@}}
\newcommand{\Tr}{\mathcal{T}_{\text{rec}}}
\newcommand{\Te}{\mathcal{T}_{\text{esc}}}
\title{Breaking Reversibility Accelerates Langevin Dynamics \\for Global Non-Convex Optimization}
\author{{Xuefeng
  Gao}\footnote{The authors are in alphabetical order.}\,\,\,\footnote{Department of Systems
    Engineering and Engineering Management, The Chinese University of Hong Kong, Shatin, N.T. Hong Kong;
    xfgao@se.cuhk.edu.hk.},
    {Mert G\"{u}rb\"{u}zbalaban}$^{\ast}$\,\footnote{Department of Management Science
and Information Systems and the DIMACS Institute, Rutgers University, Piscataway, NJ-08854, United States of America;
    mg1366@rutgers.edu.},
  Lingjiong Zhu$^{\ast}$\,\footnote{Department of Mathematics, Florida State University, 1017 Academic Way, Tallahassee, FL-32306, United States of America; zhu@math.fsu.edu.
  } }
\date{}
\begin{document}

\maketitle
\begin{abstract} 
Langevin dynamics (LD) has been proven to be a powerful technique for optimizing a non-convex objective as an efficient algorithm to find local minima while eventually visiting a global minimum on longer time-scales. LD is based on the first-order Langevin diffusion which is reversible in time. We study two variants that are based on non-reversible Langevin diffusions: the underdamped Langevin dynamics (ULD) and the Langevin dynamics with a non-symmetric drift (NLD). 
Adopting the techniques of Tzen et al. (2018) for LD to non-reversible diffusions, we show that for a given local minimum that is within an arbitrary distance from the initialization, 
with high probability, either the ULD trajectory ends up somewhere outside a small neighborhood of this local minimum within a recurrence time which depends on the smallest eigenvalue of the Hessian at the local minimum or they enter this neighborhood by the recurrence time and stay there for a potentially exponentially long escape time. The ULD algorithm improves upon 
the recurrence time obtained for LD in Tzen et al. (2018) with respect to the dependency
on the smallest eigenvalue of the Hessian at the local minimum. 
Similar results and improvements are obtained for the NLD algorithm.
We also show that non-reversible variants can exit the basin of attraction of a local minimum faster in discrete time when the objective has two local minima separated by a saddle point and quantify the amount of improvement. Our analysis suggests that non-reversible Langevin algorithms are more efficient to locate a local minimum as well as exploring the state space.
\end{abstract}

\author{}

\section{Introduction}
Consider the stochastic optimization problem:
\begin{equation*}
\min_{x\in\mathbb{R}^{d}}\overline{F}(x):=\mathbb{E}_{Z\sim P}[f(x,Z)]= \int_{\mathcal{Z}}  f(x,z)P(dz),
\end{equation*}
where $f:\mathbb{R}^{d}\times\mathcal{Z}\rightarrow\mathbb{R}$ 
is a real-valued, smooth, possibly non-convex objective function with two inputs, 
the decision vector $x\in\R^d$ and a random vector $Z$ with probability distribution $P$ defined on a set $\mathcal{Z}$. A standard approach 
for solving stochastic optimization problems is to approximate the expectation as an average over 
independent observations $z =(z_1,z_2,\dots,z_n) \in \mathcal{Z}^n$ and to solve:
\begin{equation}
\min_{x\in\mathbb{R}^{d}}F(x):= \frac{1}{n} \sum_{i=1}^n f(x,z_i).\label{pbm-finite-sum}
\end{equation}
Such problems with finite-sum structure arise in many applications, e.g. data analysis and machine learning (\cite{goodfellow2016deep}). In this work, our primary focus will be non-convex objectives.

%

%
%

First-order methods such as gradient descent and stochastic gradient descent
and their variants with momentum have been popular for solving such optimization problems (see e.g. \cite{Bertsekas2015a,Bubeck2014}). These first-order methods admit some theoretical guarantees to locate a local minimizer, however their convergence depends strongly on the initialization and they do not have guarantees to visit a global minimum. 
The Langevin Dynamics (LD) is a variant of gradient descent where a properly scaled Gaussian noise is added to the gradients:
\begin{equation*}
X_{k+1}=X_{k}-\eta\nabla F(X_{k})+\sqrt{2\eta\beta^{-1}}\xi_{k},
\end{equation*}
where $\eta>0$ is the stepsize, $\xi_{k}$ is a $d$-dimensional {isotropic} Gaussian noise with distribution $\mathcal{N}(0,I)$ where for every $k$, the noise $\xi_{k}$ is independent of the (filtration) past up to time $k$ and $\beta>0$ is called the \emph{inverse temperature} parameter. With proper choice of parameters and under mild assumptions, LD algorithm converges to a stationary distribution that concentrates around a global minimum (see e.g. \cite{Borkar,gelfand1991recursive}) from an arbitrary initial point. Therefore, LD algorithm has a milder dependency on the initialization, visiting a global minimum eventually. 
The analysis of the convergence behavior of LD is often based on viewing LD as a discretization of the associated stochastic differential equation (SDE), known as the \emph{overdamped Langevin} diffusion or the \emph{first-order Langevin} diffusion, 
\begin{equation}\label{eqn:overdamped}
dX(t)=-\nabla F(X(t))dt+\sqrt{2\beta^{-1}}dB_{t},
\end{equation}
where $B_{t}$ is a $d$-dimensional standard Brownian motion (see e.g. \cite{gelfand1991recursive}). Under some mild assumptions on $F$, this SDE admits the following unique stationary distribution: 
\begin{equation}\label{overdamped:stationary}
\pi(dx) = \Gamma^{-1}e^{-\beta F(x)}dx,
\end{equation}
where $\Gamma>0$ is a normalizing constant.
Note that overdamped Langevin diffusion is \emph{reversible} in the sense that if $X(0)$ is distributed according to the stationary measure $\pi$, then $(X_t)_{0\leq t \leq T}$ and $(X_{T-t})_{0\leq t\leq T}$ have the same law. 
It is known that the reversible Langevin algorithm converges to a local minimum in time polynomial 
with respect to parameters $\beta$ and $d$, 
the intuition being that the expectation of the iterates follows the gradient descent dynamics 
which converges to a local minimum (see e.g. \cite{zhang-sgld,fang1997annealing}). 
It is also known that once Langevin algorithms arrive to a neighborhood of a local optimum, 
they can spend exponentially many iterations in dimension to escape from the basin of attraction of this local minimum. This behavior is known as ``metastability" and has been studied well (see e.g. \cite{bovier2005metastability,Bovier2004,berglund2013kramers}).

Recently, \cite{pmlr-v75-tzen18a} provided a finer characterization of 
this metastability phenomenon. They showed that for a given local optimum $x_{\ast}$, with high probability and arbitrary initialization, either LD iterates arrive at a point outside an $\varepsilon$-neighborhood of this local minimum within a recurrence time $\Tr = \mathcal{O}\left(\frac{1}{m}\log(\frac{1}{\varepsilon})\right)$, where $m$ is smallest eigenvalue of the Hessian $\nabla^2 F(x_{\ast})$ at the local minimum or they enter this $\varepsilon$-neighborhood by the recurrence time and stay there until a potentially exponentially long escape time $\Te$. The escape time $\Te$ measures how quickly the LD algorithm can get away from a given neighborhood around a local minimum, therefore it can be viewed as a measure of the effectiveness of LD for the search of a global minimizer, whereas the recurrence time $\Tr$ can be viewed as the order of the time-scale for which search for local minima in the basin of attraction of that minimum happens.

One popular non-reversible variant of overdamped Langevin that can improve its performance in practice for a variety of applications (Section 4 in \cite{Lelievre-optdrift}) 
is based on the \emph{underdamped Langevin} diffusion, also known as the \emph{second-order Langevin} diffusion (\cite{kramers1940brownian}), 
\begin{align}
&dV(t)=-\gamma V(t)dt- \nabla F(X(t))dt+\sqrt{2\gamma \beta^{-1}}dB_{t},
\label{eq:VL}
\\
&dX(t)=V(t)dt,\label{eq:XL}
\end{align}
where $X(t), V(t) \in \mathbb{R}^d$, and $\gamma>0$ is known as the \emph{friction coefficient}.  
It is known that under mild assumption on $F$, the Markov process $(X, V)$ is ergodic and have a unique stationary distribution 
\begin{equation}
\pi(dx, dv)= \Gamma_{U}^{-1} e^{-\beta(\frac{1}{2} \Vert v\Vert^2 + F(x))} dx dv,\label{eq-stat-dist}
\end{equation}
where $\Gamma_{U}>0$ is a normalizing constant. Hence, the marginal distribution in $X$ of the Gibbs distribution $\pi(dx, dv)$ is the same as the invariant distribution \eqref{overdamped:stationary} of the overdamped Langevin dynamics \eqref{eqn:overdamped}.  {We refer the readers to \cite{Bakry,Cheng,cheng-nonconvex,dalalyan2018kinetic,Eberle,Ma2019,mattingly2002ergodicity,Wu2001,Villani2009} for more on underdamped Langevin diffusions.} 
The Euler discretization of the underdamped Langevin diffusion
is known as the inertial Langevin dynamics or the Hamiltonian Langevin Monte Carlo algorithm {\cite{duane1987hybrid,neal2010mcmc}.}
\cite{Cheng} introduced a more accurate discretization of
underdamped Langevin diffusion,
where for any $k\in\mathbb{N}$ and any $k\eta<t\leq(k+1)\eta$,
\begin{equation*}
d\tilde{V}(t)=-\gamma\tilde{V}(t)dt-\nabla F(\tilde{X}(k\eta))dt+\sqrt{2\gamma\beta^{-1}}dW_{t},
\qquad
d\tilde{X}(t)=\tilde{V}(t)dt.
\end{equation*}
Note that when $t$ is between $k\eta$ and $(k+1)\eta$, the above diffusion process
is an Ornstein-Uhlenbeck process, which is a Gaussian process with explicit mean and covariance.
\cite{Cheng} showed that $(V_{k},X_{k})$ 
have the same distribution as $(\tilde{V}(k\eta),\tilde{X}(k\eta))$,
where the discrete iterates $(V_{k},X_{k})$,
called the \emph{underdamped Langevin dynamics} (ULD)\footnote{This algorithm is also known as the kinetic Langevin Monte Carlo algorithm.}
are generated as follows:
\begin{align}
&V_{k+1}=\psi_{0}(\eta)V_{k}-\psi_{1}(\eta)\nabla F(X_{k})+\sqrt{2\gamma\beta^{-1}}\xi_{k+1},
\label{eq:V-iterate}
\\
&X_{k+1}=X_{k}+\psi_{1}(\eta)V_{k}-\psi_{2}(\eta)\nabla F(X_{k})+\sqrt{2\gamma\beta^{-1}}\xi'_{k+1},
\label{eq:X-iterate}
\end{align}
where $(\xi_{k+1},\xi'_{k+1})$ is a $2d$-dimensional centered Gaussian vector
so that $(\xi_{j},\xi'_{j})$ are i.i.d. and independent of the initial condition,
and for any fixed $j$, the random vectors $((\xi_{j})_{1},(\xi'_{j})_{1})$, 
$((\xi_{j})_{2},(\xi'_{j})_{2})$, $\ldots$, $((\xi_{j})_{d},(\xi'_{j})_{d})$
are i.i.d. with the covariance matrix:
$$C(\eta)=\int_{0}^{\eta}[\psi_{0}(t),\psi_{1}(t)]^{T}[\psi_{0}(t),\psi_{1}(t)]dt,$$
where $\psi_{0}(t)=e^{-\gamma t}$ and $\psi_{k+1}(t)=\int_{0}^{t}\psi_{k}(s)ds$ for every $k\geq 0$.
Recent work \cite{GGZ} showed that ULD admits better non-asymptotic performance guarantees compared to known guarantees for LD in the context of non-convex optimization when the objective satisfies a dissipativity condition. Recent work also showed that ULD or alternative discretizations of the underdamped diffusion can sample from the Gibbs distribution more efficiently than LD when $F$ is globally strongly convex (see e.g. \cite{Cheng,dalalyan2018kinetic,Mangoubi-Smith17}) or strongly convex outside a compact set (see e.g. \cite{cheng-nonconvex}). 

The second non-reversible variant of overdamped Langevin involves adding a drift term: 
\begin{equation}\label{eqn:nonreversible1}
dX(t)=-A_{J}(\nabla F(X(t)))dt+\sqrt{2\beta^{-1}}dB_{t}, 
\qquad
A_{J}:=I+J,
\end{equation} 
where $J\neq 0$ is a $d\times d$ anti-symmetric matrix, i.e. $J^{T}=-J$ and $I$ is the $d\times d$ identity matrix,
and $B_{t}$ is a standard $d$-dimensional Brownian motion.
It can be shown that such a drift preserves the stationary distribution \eqref{overdamped:stationary} 
(Gibbs distribution) of 
the overdamped Langevin dynamics, and it can lead to a faster convergence to the stationary distribution than the reversible Langevin diffusion (the case $J=0$), see e.g. \cite{HHS93, HHS05,Lelievre-optdrift,pavliotis2014stochastic,guillin2016} for details. 
{Algorithms based on \eqref{eqn:nonreversible1} have been applied to sampling, see e.g. \cite{Futami,rey2016improving,reyGraphs,DLP2016,DPZ17},
and non-convex optimization, see e.g. \cite{HWGGZ20}.}
The {Euler} discretization of \eqref{eqn:nonreversible1} leads to 
\begin{equation}\label{eqn:nonreversible:discrete}
X_{k+1}=X_{k}-\eta A_{J}(\nabla F(X_{k}))+\sqrt{2\eta\beta^{-1}}\xi_{k},
\end{equation}
which we refer to as the \emph{non-reversible Langevin dynamics} (NLD).

\textbf{Contributions.} 
%
In this paper, we investigate the metastability behavior of non-reversible Langevin algorithms for non-convex objectives.  We extend the results of \cite{pmlr-v75-tzen18a}
to non-reversible Langevin dynamics and show that for a given local minimum 
that is within an arbitrary distance $r$ from the initialization, with high probability, either ULD trajectory ends up somewhere outside an $\varepsilon$-neighborhood of this local minimum within a recurrence time $\mathcal{T}_{\text{rec}}^{U} = \bigO\left(\frac{|\log(m)|}{\sqrt{m}}\log(r/\varepsilon)\right)$ or they enter this neighborhood by the recurrence time and stay there for a  potentially exponentially long escape time. The analogous result shown in \cite{pmlr-v75-tzen18a} for reversible LD requires a recurrence time of $\mathcal{T}_{\text{rec}} = \bigO\left(\frac{1}{m}\log(r/\varepsilon)\right)$. This shows that underdamped dynamics requires a smaller recurrence time by a square root factor in $m$ (ignoring a $\log(m)$ factor). The difference is significant as the smallest eigenvalue $m$ of the Hessian matrix at a local optimum can be very small in a number of applications, including deep learning (see e.g. \cite{Chaudhari,sagun2016eigenvalues}). Since the recurrence time can be viewed as a measure of the efficiency of the search of a local minimum \cite{pmlr-v75-tzen18a}, our results suggest that ULD operates on a faster time-scale to locate a local minimum. Similar results are obtained for NLD. In order to obtain the results, we first give a refined characterization of the dynamics around a local minimum by linearizing the gradients. The analysis here is more complicated compared to the LD case
in \cite{pmlr-v75-tzen18a} due to non-reversibility,
and requires us to develop new estimates, e.g. Lemma~\ref{lem:keylemma}, 
where the eigenvalue and the norm estimates require a significant amount of work
because the forward iterations correspond to non-symmetric matrices $H_\gamma$ {(defined in \eqref{def-Hgamma0})} and achieving the
acceleration behavior requires careful estimates.
The analysis here also requires us to establish novel uniform $L^2$ bounds for NLD in both continuous and discrete times. 

In addition, we consider the mean exit time from the basin of attraction of a local minimum for non-reversible algorithms. We focus on the double-well example which has been the recent focus of the literature \cite{BR2016, Landim2017} as it is the simplest non-convex function that gives intuition about the more general case and for which mean exit time has been studied in continuous time.
Our analysis shows that non-reversible dynamics can exit the basin of attraction of a local minimum faster under some conditions and characterizes the improvement for both ULD and NLD compared to LD when the parameters of these algorithms are chosen appropriately. These results support the numerical evidence that non-reversible algorithms can explore the state space more efficiently  \cite{carin-2015-langevin-integrators,emilyfox-sghmc,guillin2016} and bridges a gap between the theory and practice of Langevin algorithms.

\textbf{Other related work.} Langevin dynamics has been studied under simulated annealing algorithms in the optimization, physics and statistics literature and its asymptotic convergence guarantees are well known (see e.g. \cite{gidas1985nonstationary, hajek1985tutorial,gelfand1991recursive,kirkpatrick1983optimization,bertsimas1993simulated,Belloni,Borkar}). However, finite-time performance guarantees for LD have not been studied until more recently {(see e.g. \cite{Dalalyan,DM2017}).} 
 {Non-asymptotic performance guarantees for stochastic gradient versions have also been studied.}
 See also e.g. \cite{Raginsky, zhang-sgld,CDT2020} for related results. \cite{xu2018global} shows that it suffices to have $\mathcal{O}(nd/(\lambda \varepsilon))$ gradient evaluations or $\mathcal{O}(d^7/(\lambda^5 \varepsilon^5))$ stochastic gradient evaluations to compute an almost minimizer where $\lambda$ is a spectral gap parameter that is exponentially small in the dimension $d$ and $\varepsilon$ is the target accuracy. These results improve upon the existing results from the seminal work of \cite{Raginsky}. \cite{erdogdu18} also considered Euler discretization of general dissipative diffusions in the non-convex setting, proved a $1/\varepsilon^2$ convergence rate, showing that different diffusions are suitable for minimizing different convex/non-convex objective functions $f$. Their expected suboptimality bound also generalizes the results in \cite{Raginsky}. See also \cite{nguyen19} for non-asymptotic guarantees for non-convex optimization using L\'evy-driven Langevin dynamics {proposed in \cite{FLMC}}.

\textbf{Notations.}
Throughout the paper, for any $x,y\in\mathbb{R}$, we use
the notation $x\wedge y$ to denote $\min\{x,y\}$ 
and $x\vee y$ to denote $\max\{x,y\}$.
For any $n\times n$ matrix $A$, we use
$\lambda_{i}(A)$, $1\leq i\leq n$,
to denote the $n$ eigenvalues of $A$.
We also assume that $H$ is the Hessian matrix $\nabla^{2}F$
evaluated at the local minimum $x_{\ast}$, and is positive definite. The norm $\|\cdot\|$ denotes the 2-norm of a vector and the (spectral norm) 2-norm of a matrix.




\vspace{-0.1in}
\section{Main results}\label{sec:rec:escape}
\vspace{-0.05in}


In this section, we will study the recurrence time 
$\mathcal{T}_{\text{rec}}^U$ 
of underdamped Langevin dynamics (ULD), 
and the corresponding time-scale $\mathcal{T}_{\text{rec}}^J$ 
for non-reversible Langevin dynamics (NLD). 
We will show that recurrence time of underdamped and non-reversible Langevin algorithms will improve upon that of reversible Langevin 
algorithms in terms of its dependency to the smallest eigenvalue of the Hessian at a local minimum. For non-convex optimizations, our results suggest that for ULD and NLD, searching for a local minimum happens on a faster time scale compared to the reversible LD. 
For the rest of the paper, we impose the following assumptions.
\begin{assumption}\label{assumptions}
We impose the following assumptions.  
\begin{itemize}
\item [$(i)$]
The functions $f(\cdot,z)$ are twice continuously differentiable, non-negative valued, and
$|f(0,z)|\leq A$,
$\left\Vert\nabla f(0,z)\right\Vert\leq B$,
and $\left\Vert\nabla^{2}f(0,z)\right\Vert\leq C$,
uniformly in $z\in\mathcal{Z}$ for some $A,B,C>0$.
\item [$(ii)$]
$f(\cdot,z)$ have Lipschitz-continuous gradients and Hessians,
uniformly in $z\in\mathcal{Z}$, there exist constants $L,M>0$
so that for all $x,y\in\mathbb{R}^{d}$,
\begin{equation}
\left\Vert\nabla f(x,z)-\nabla f(y,z)\right\Vert
\leq M\Vert x-y\Vert,\label{eq:hessian-L}
\qquad
\left\Vert\nabla^{2}f(x,z)-\nabla^{2}f(y,z)\right\Vert\leq L\Vert x-y\Vert. 
\end{equation}
\item [$(iii)$]
The empirical risk $F(\cdot)$ is $(m,b)$-dissipative:
$\langle x,\nabla F(x)\rangle
\geq m\Vert x\Vert^{2}-b$.
\item [$(iv)$] The initialization satisfies $\Vert X_{0}\Vert\leq R:=\sqrt{b/m}$.
\end{itemize}
\end{assumption}


For the Hessian $H$ at a fixed local minimum of $F$ defined in \eqref{pbm-finite-sum}, it is a $d\times d$ symmetric positive definite matrix with eigenvalues $\{\lambda_i\}_{i=1}^d$ in increasing order, i.e. 
{\begin{equation}\label{defn:m:M}
m=\lambda_1 \leq \lambda_2 \leq \dots \leq \lambda_d=M.
\end{equation}}

\vspace{-0.15in}
\subsection{Underdamped Langevin dynamics}\label{sec:underdamped}
Recall the underdamped Langevin \eqref{eq:V-iterate}-\eqref{eq:X-iterate}.
Define
\begin{eqnarray} \label{def-Hgamma0}
H_{\gamma}:=
\left[
\begin{array}{cc}
\gamma I & H
\\
-I & 0
\end{array}
\right],
\end{eqnarray}
{where we recall that $H$ is the Hessian matrix $\nabla^{2}F$ evaluated at the local minimum $x_{\ast}$.}
In the first lemma, we provide an estimate on $\| e^{-tH_\gamma}\|$. This is the key result that allows the underdamped dynamics to achieve faster rate compared with overdamped dynamics. 
\begin{lemma}[Estimate on $\| e^{-tH_\gamma}\|$]\label{lem:keylemma}
(i) If $\gamma \in (0,2\sqrt{m}$), then
$\left\| e^{-t H_\gamma}\right\| \leq C_{\hat{\varepsilon}} e^{-\sqrt{m}(1-\hat{\varepsilon})t}$,
where $C_{\hat{\varepsilon}}:=\frac{1+M}{\sqrt{m(1-(1-\hat{\varepsilon})^{2})}}$,
and $\hat{\varepsilon}:=1-\frac{\gamma}{2\sqrt{m}} \in (0,1)$, 
{where $m,M$ are defined in \eqref{defn:m:M}}. 
(ii) If $\gamma=2\sqrt{m}$, then we have
$\left\| e^{-t H_\gamma}\right\| \leq \sqrt{C_{H}+2+ (m+1)^2 t^2 }\cdot e^{-\sqrt{m}t}$,
where $C_H := \max_{i: \lambda_i > m}\frac{(1+\lambda_i)^{2}}{\lambda_i-m}$.  
\end{lemma}

We investigate the behavior around local minima for 
the underdamped Langevin dynamics \eqref{eq:V-iterate}-\eqref{eq:X-iterate}
by studying recurrence and escape times
with the choice of the friction coefficient $\gamma=2\sqrt{m}$
which is optimal for $\Vert e^{-tH_{\gamma}}\Vert$. 
We first recall the Lambert W function $W(x)$ {which} is defined via
the solution of the algebraic equation $W(x)e^{W(x)}=x$.
When $-e^{-1}\leq x<0$, $W(x)$ has two branches, the upper branch $W_{0}(x)$ and
the lower branch $W_{-1}(x)$, see e.g. \cite{Corless}.

%


\begin{theorem}\label{MainThm1}
Fix $\gamma=2\sqrt{m}$, $\delta\in(0,1)$ and $r>0$. For a given $\varepsilon$ satisfying 
$
0<\varepsilon<\overline{\varepsilon}^{U}=
\min\left\{\mathcal{O}(r),\mathcal{O}(m)\right\},
$ we define the recurrence time 
\begin{equation}\label{rec:time:U}
\mathcal{T}_{\text{rec}}^{U}
=-\frac{1}{\sqrt{m}}
W_{-1}\left(\frac{-\varepsilon^{2}\sqrt{m}}{8r^{2}\sqrt{C_{H}+2+(m+1)^{2}}}\right)
=\mathcal{O}\left(\frac{|\log(m)|}{\sqrt{m}}\log\left(\frac{r}{\varepsilon}\right)\right),
\nonumber
\end{equation}
and the escape time $\mathcal{T}_{\text{esc}}^{U}:=\mathcal{T}_{\text{rec}}^{U}+\mathcal{T}$,
for any arbitrary $\mathcal{T}>0$. Consider an arbitrary initial point $x$ for the underdamped Langevin dynamics and a local minimum $x_{\ast}$ at a distance at most $r$. 
Assume that the stepsize $\eta$ satisfies
\begin{align*}
\eta\leq\overline{\eta}^{U}&=
\min\Bigg\{\mathcal{O}(\varepsilon),
\mathcal{O}\left(\frac{m^{2}\beta\delta\varepsilon^{2}}
{(md+\beta)\mathcal{T}_{\text{rec}}^{U}}\right),
\mathcal{O}\left(\frac{m^{5/4}\delta}{(d+\beta)^{1/2}(\mathcal{T}_{\text{esc}}^{U})^{1/2}}\right),
\mathcal{O}(m^{5/2})\Bigg\}
\,,
\end{align*}
and $\beta$ satisfies
\begin{align*}
\beta\geq
\underline{\beta}^{U}
&=\max\Bigg\{\Omega\left(\frac{d+\log((\mathcal{T}+1)/\delta)}{m\varepsilon^{2}}\right),
\Omega\left(\frac{d\eta m^{1/2}\log(\delta^{-1}\mathcal{T}_{\text{rec}}^{U}/\eta)}{\varepsilon^{2}}\right)\Bigg\},
\end{align*}  
for any realization of training data $z$, with probability at least $1-\delta$
with respect to the Gaussian noise, at least one of the following events will occur:
(1) $\Vert X_{k}-x_{\ast}\Vert\geq\frac{1}{2}\left(\varepsilon+re^{-\sqrt{m}k\eta}\right)$
for some $k\leq\eta^{-1}\mathcal{T}_{\text{rec}}^{U}$;
(2) $\Vert X_{k}-x_{\ast}\Vert\leq\varepsilon+re^{-\sqrt{m}k\eta}$
for every $\eta^{-1}\mathcal{T}_{\text{rec}}^{U}\leq k\leq\eta^{-1}\mathcal{T}_{\text{esc}}^{U}$.
\end{theorem}

The expressions of technical constants in the statement of Theorem~\ref{MainThm1}, including $\overline{\varepsilon}^{U}, \overline{\eta}^{U}$ and $\underline{\beta}^{U}$,
can be found in the proof of Theorem~\ref{MainThm1} in the Supplementary File.


In many applications, the eigenvalues of the Hessian at local extrema often concentrate around zero and the magnitude of the smallest eigenvalue $m$ of the Hessian can be very small (see e.g. \cite{Chaudhari,sagun2016eigenvalues}).
In \cite{pmlr-v75-tzen18a}, the overdamped Langevin algorithm is analyzed
and the recurrence time 
$\mathcal{T}_{\text{rec}}=\mathcal{O}\left(\frac{1}{m}\log(\frac{r}{\varepsilon})\right)$,
while our recurrence time 
$\mathcal{T}_{\text{rec}}^{U}=\mathcal{O}\left(\frac{|\log(m)|}{\sqrt{m}}\log(\frac{r}{\varepsilon})\right)$
for the underdamped Langevin algorithm, 
which has a square root factor improvement. Since the recurrence time can be viewed as a measure of the efficiency of the search of a local minimum, our result suggests that ULD require smaller recurrence time, so they operate on a faster time scale to locate a local minimum.


\subsection{Non-reversible Langevin dynamics}\label{sec:anti}
We investigate the behavior around local minima for 
the non-reversible Langevin dynamics \eqref{eqn:nonreversible:discrete}
by studying recurrence and escape times. 
One can expect that the convergence behavior of the non-reversible Langevin diffusion is controlled by the decay of $\left\| e^{-tA_{J}H}\right\|$ in time $t$, which is related to the real part of the eigenvalues
    $\lambda^J_{i}:=\text{Re}\left(\lambda_{i}(A_{J}H)\right)$
indexed with increasing order and their multiplicity. It has been {shown} that
for any anti-symmetric matrix $J$, we have $ m = \lambda_1 \leq \lambda_1^J \leq \lambda_d^J \leq \lambda_d = M, $
and $m = \lambda_1 = \lambda_1^J$ if a very special condition holds. See Theorem 3.3. in \cite{HHS93} for details. This suggests generically non-reversible Langevin leads to a faster exponential decay compared to reversible Langevin, i.e $\lambda_1^J > \lambda_1$.
In addition, we have the {following} estimates: there exists a positive constant $C_J$ that depends on $J$ such that 
\begin{equation} \label{eq:norm_J}
\left\| e^{-tA_J H}\right\| \leq C_J (1 + t^{n_1-1}) e^{-t \lambda_1^J},
\end{equation}
where $n_1$ is the maximal size of a Jordan block of $A_J H$ corresponding to the eigenvalue $\lambda_1^J$. It follows that 
for any $\tilde{\varepsilon}>0$, there exist some constant $C_{J}(\tilde{\varepsilon})$ that depends on $\tilde{\varepsilon}$ and $J$ such that for every $t\geq 0$,
\begin{equation}\label{CJ:mJ1}
\Vert e^{-tA_{J}H}\Vert\leq C_{J}(\tilde{\varepsilon})e^{-tm_{J}(\tilde{\varepsilon})},
\qquad m_{J}(\tilde{\varepsilon}):=\lambda_{1}^{J}-\tilde{\varepsilon}.
\end{equation}

Now we state the main result on the metastability of non-reversible Langevin dynamics \eqref{eqn:nonreversible:discrete}.
\begin{theorem}\label{MainThm1:Q} 
Fix $\delta\in(0,1)$ and $r>0$. For a given $\varepsilon$ satisfying
$0<\varepsilon
<\overline{\varepsilon}^{J}=\min\left\{\mathcal{O}\left(\frac{m_{J}(\tilde{\varepsilon})}{C_{J}(\tilde{\varepsilon})}\right),\mathcal{O}(rC_{J}(\tilde{\varepsilon}))\right\}$,
we define the recurrence time
\begin{align}\label{rec:time:J}
\mathcal{T}_{\text{rec}}^{J}
&:=\frac{2}{m_{J}(\tilde{\varepsilon})}\log\left(\frac{8r}{C_{J}(\tilde{\varepsilon})\varepsilon}\right)
=\mathcal{O}\left(\frac{1}{m_{J}(\tilde{\varepsilon})}\log\left(\frac{r}{C_{J}(\tilde{\varepsilon})\varepsilon}\right)\right),
\nonumber
\end{align}
and the escape time
$\mathcal{T}_{\text{esc}}^{J}:=\mathcal{T}_{\text{rec}}^{J}+\mathcal{T}$ 
for any arbitrary $\mathcal{T}>0$.
For any initial point $x$ 
and a local minimum $x_{\ast}$ at a distance at most $r$.
Assume the stepsize
\begin{align*}
\eta\leq
\overline{\eta}^{J}
&=\min\Bigg\{\mathcal{O}\left(\varepsilon\right),
\mathcal{O}\left(\frac{\delta\varepsilon^{2}m^{3}}{(m+\beta^{-1}d)\mathcal{T}_{\text{rec}}^{J}}\right),
\mathcal{O}\left(\frac{\delta^{2}m^{3}}{(d+m\beta+dm^{3})\mathcal{T}_{\text{esc}}^{J}}\right)
\Bigg\}\,,
\end{align*}
and $\beta$ satisfies
\begin{align*}
\beta\geq
\underline{\beta}^{J}
&=
\max
\Bigg\{\Omega\left(\frac{C_{J}(\tilde{\varepsilon})^{2}}{m_{J}(\tilde{\varepsilon})\varepsilon^{2}}\left(d
+\log\left(\frac{\mathcal{T}+1}{\delta}\right)\right)\right),
\Omega\left(\frac{d\eta\log(\delta^{-1}\mathcal{T}_{\text{rec}}^{J}/\eta)}
{\varepsilon^{2}}\right)
\Bigg\},
\end{align*}
for any realization of training data $z$, with probability at least $1-\delta$
with respect to the Gaussian noise, at least one of the following events will occur:
(1) $\Vert X_{k}-x_{\ast}\Vert\geq\frac{1}{2}\left(\varepsilon+re^{-m_{J}(\tilde{\varepsilon}) k\eta}\right)$
for some $k\leq\eta^{-1}\mathcal{T}_{\text{rec}}^{J}$;
(2) $\Vert X_{k}-x_{\ast}\Vert\leq\varepsilon+re^{-m_{J}(\tilde{\varepsilon}) k\eta}$
for every $\eta^{-1}\mathcal{T}_{\text{rec}}^{J}\leq k\leq\eta^{-1}\mathcal{T}_{\text{esc}}^{J}$.
\end{theorem}

The expressions of technical constants in the statement of Theorem~\ref{MainThm1:Q}, 
including $\overline{\varepsilon}^{J}, \overline{\eta}^{J}$ and $\underline{\beta}^{J}$, 
can be found in the proof of Theorem~\ref{MainThm1:Q} in the Supplementary File.


In \cite{pmlr-v75-tzen18a}, the overdamped Langevin algorithm is used
and the recurrence time 
$\mathcal{T}_{\text{rec}}=\mathcal{O}\left(\frac{1}{m}\log(\frac{r}{\varepsilon})\right)$,
while our recurrence time 
$\mathcal{T}_{\text{rec}}^{J}=\mathcal{O}\left(\frac{1}{m_{J}(\tilde{\varepsilon})}
\log(\frac{r}{C_{J}(\tilde{\varepsilon})\varepsilon})\right)$
for the non-reversible Langevin algorithm, {and  
$\mathcal{T}_{\text{rec}}^{J}=\mathcal{O}\left(\frac{1}{m_{J}(\tilde{\varepsilon})}
\log(\frac{r}{\varepsilon})\right)$ when $C_{J}(\tilde{\varepsilon})=\mathcal{O}(1)$, which has the improvement over the overdamped
Langevin algorithm since $m_{J}(\tilde{\varepsilon})>m$ in general. }

One could also ask what is the choice of the matrix $J$ in NLD. A natural idea is to 
 maximize the exponent $\lambda_1^J$ that appears in Equation~\eqref{eq:norm_J}, i.e., let $J_{opt} := \arg \max_{J = -J^T}  \lambda_1^J\,. $
A formula for $J_{opt}$ and an algorithm to compute it is known (see Fig. 1 in \cite{Lelievre-optdrift}), however this is not practical to compute for optimization purposes as it requires the knowledge of the eigenvectors and eigenvalues of the matrix $H$ which is generally unknown in practice. Nevertheless, $J_{opt}$ gives information about the extent of acceleration that can be obtained. It is known that $\lambda_1^{J_{opt}} = \frac{\text{Tr}(H)}{d}$,
as well as a characterization of the constants $C_{J_{opt}}$ and $n_1$ arising in Equation~\eqref{eq:norm_J} when $J=J_{opt}$ (see Equation (46) in \cite{Lelievre-optdrift}). We see that $md \leq \text{Tr}(H) \leq M(d-1) + m$ as the smallest and the largest eigenvalue of $H$ is $m$ and $M$. Therefore, we have
\begin{equation*}
1 \leq \frac{\lambda_1^{J_{opt}}}{\lambda_1} \leq \frac{M(d-1) + m}{md}.
\end{equation*}
The acceleration is not possible (the ratio above is $1$) if and only if all the eigenvalues of $H$ are the same and are equal to $m$; i.e. when $M=m$ and $\text{Tr}(H)=md$. Otherwise, $J_{opt}$ can accelerate by a factor of $\frac{M(d-1) + m}{md}$ which is on the order of the condition number $\kappa:=M/m$ up to a constant $\frac{d-1}{d}$ which is close to one for $d$ large.
 In practice, one can also use some easily constructed choices of $J$ as suggested in the literature (see e.g. \cite{HHS93}), and run the NLD algorithm using $\alpha J$ (which is still anti-symmetric), where $\alpha$ is a constant that can be tuned and it represents the magnitude of non-reversible purturbations. For example, one can choose
$J$ to be a circulant matrix, e.g.,
$$-J \nabla F(x) = \left( \partial_{x_{d}} F(x) - \partial_{x_{2}} F(x),  \partial_{x_{1}} F(x) -  \partial_{x_{3}} F(x), \ldots,
\partial_{x_{d-1}} F(x) - \partial_{x_{1}} F(x) \right)\,,$$
and this product is easy to implement by shifting the gradient vector in the memory by one unit to the left and one unit to the right and then taking the difference.

\section{Exit time for non-reversible Langevin dynamics} \label{sec:global}

For convergence to a small neighborhood of the global minimum, Langevin trajectory needs to not only escape from the neighborhood of a local optimum but also exit the basin of attraction of the current minimum and transit to the basin of attraction of other local minima including the global minima. In particular, the convergence rate to a global minimum is controlled by the mean \emph{exit time}  from the basin of attraction of a local minima in a potential landscape $F(\cdot)$ in \eqref{eqn:overdamped}. In this section we will show that non-reversible Langevin dynamics can lead to faster (smaller) exit times.

Throughout this section, we consider a double-well potential 
$F:\mathbb{R}^d \rightarrow \mathbb{R}$, which has two local minima $a_1, a_2$ with $F(a_2) < F(a_1)$. 
The two minima are separated by a saddle point $\sigma$. See Figure~\ref{fig:double-well} in
the Supplementary File.
In addition to Assumption~\ref{assumptions} (i)-(iii), 
we make generic assumptions that $F \in C^3$, the Hessian of $F$ at each of the local minima is positive definite, 
and that the Hessian of $F$ at the saddle point $\sigma$ has exactly one strictly negative eigenvalue 
(denoted as $-\mu^{\ast}(\sigma)<0$) and other eigenvalues are all positive.
For the overdamped Langevin diffusion, it is known (\cite{Bovier2004, berglund2013kramers}) that
the expected time of the process starting from $a_1$ and hitting a small neighborhood of $a_2$ is:
\begin{equation} \label{eq:exit-time-over}
 \mathbb{E}\left[\theta_{a_1 \rightarrow a_2}^{\beta}\right]
= [1+ o_{\beta}(1)] \cdot\frac{2 \pi}{\mu^{\ast}(\sigma)}\cdot e^{\beta [F(\sigma)- F (a_1)]}\cdot\sqrt{ \frac{|\det\mbox{Hess }F (\sigma)|}{\det\mbox{Hess }F(a_1)}}.
\end{equation}
Here, $o_{\beta}(1) \rightarrow 0$ as $\beta \rightarrow \infty,$ $\det \mbox{Hess } F(x)$ stands for the determinant of the Hessian of $F$ at $x$, and
$-\mu^{\ast}(\sigma)$ is the unique negative eigenvalue of the Hessian of $F$ at the saddle point $\sigma$. This formula is known as the Eyring-Kramers formula for reversible diffusions.
Its rigorous proof was first obtained by \cite{Bovier2004} 
by a potential analysis approach, and then by \cite{Helffer} 
through Witten Laplacian analysis. We refer
to \cite{berglund2013kramers} for a survey on mathematical approaches to the Eyring-Kramers formula.
We note that in many practical applications, for instance in the training of neural networks, the eigenvalues of the Hessian at local extrema concentrate around zero and the
magnitude of the eigenvalues $m$ and $\mu^{\ast}(\sigma)$ can often be very small (see e.g. \cite{sagun2016eigenvalues,Chaudhari}).


\subsection{Underdamped Langevin dynamics}




Denote $\Theta_{a_1 \rightarrow a_2}^{\beta}$ as the first time of that 
the underdamped diffusion \eqref{eq:VL}--\eqref{eq:XL}
starting from $a_1$ and hitting a small neighborhood of $a_2$. 
\cite{BR2016} (Remark 5.2) suggests that the expected exit time is:
\begin{equation*} 
\mathbb{E}\left[\Theta_{a_1 \rightarrow a_2}^{\beta}\right] 
= [1+ o_{\beta}(1)] \cdot\frac{2 \pi}{\mu_{\ast}}\cdot e^{\beta [F(\sigma)- F (a_1)]}\cdot\sqrt{ \frac{|\det\mbox{Hess }F (\sigma)|}{\det\mbox{Hess }F(a_1)}},
\end{equation*}
where $\mu_*$ is the unique positive eigenvalue of the matrix 
$
\hat H_{\gamma} (\sigma)=
\left[
\begin{array}{cc}
- \gamma I & - \mathbb{L}^{\sigma}
\\
I & 0
\end{array}
\right],
$
where $\mathbb{L}^{\sigma}$ is the Hessian matrix of $F$ at the saddle point $\sigma$. 
 One can readily show that $\mu_*$ is given by the positive eigenvalue of the $2 \times 2$ matrix
$
\left[\begin{array}{cc}
-\gamma & \mu^{\ast}(\sigma)
\\
1  & 0 
\end{array} 
\right],
$
which implies that
$\mu_* = \frac{1}{2} \cdot \left(\sqrt{\gamma^2 + 4 \mu^{\ast}(\sigma)} - \gamma \right).$
So if $\gamma + \mu^{\ast}(\sigma) <1$, then we have $\mu_{\ast} > \mu^{\ast}(\sigma)$ and therefore
\begin{equation}\label{eq-exit-acceleration}
\lim_{\beta \rightarrow \infty} \mathbb{E}\left[\Theta_{a_1 \rightarrow a_2}^{\beta}\right]\Big/\mathbb{E}\left[\theta_{a_1 \rightarrow a_2}^{\beta}\right] = \mu^{\ast}(\sigma)/\mu_{\ast} < 1.
\end{equation}
That is, the mean exit time for the underdamped diffusion is smaller compared with that of the overdamped diffusion. Roughly speaking, the condition $\gamma + \mu^{\ast}(\sigma) <1$ says that if the curvature of the saddle point in the negative descent direction is not too steep (i.e. if $\mu^{\ast}(\sigma) < 1$), we can choose $\gamma$ small enough to accelerate the exit time of the reversible Langevin dynamics. Intuitively speaking, it can be argued that the underdamped process can climb hills and explore the state space faster as it is less likely to go back to the recent states visited due to the momentum term 
(see e.g. \cite{BR2016}). For the discrete time dynamics, it is intuitive to expect that the exit time of the underdamped discrete dynamics is close to that of the continuous time diffusion when the step size is small \cite{Gobet2017}, and hence a similar result as \eqref{eq-exit-acceleration} will hold for the discrete dynamics when $\gamma + \mu^{\ast}(\sigma) <1$. 

To apply the results in  \cite{Gobet2017}, we consider a sequence of bounded domains $D_n$ indexed by $n$ so that the following conditions hold: first, the region $D_n$ contains $a_1, a_2, \sigma$ for large $n$; second, as $n$ increases, $D_n$ increases to the set $D_{\infty}= O^c(a_2):= \mathbb{R}^{d}\backslash O(a_{2})$, 
where $O(a_{2})$ denotes a small neighborhood of $a_{2}$; third, the underdamped SDE (diffusion) is non-degenerate along the normal direction to the boundary of $D_n$ with probability one. 
Fix the parameters $\beta$ and $\gamma$ in the underdamped Langevin dynamics. 
Denote $ \hat \Theta_{a_1 \rightarrow a_2}^{\beta, n}$ be the exit time of $X_k$ (from the ULD dynamics) starting from $a_1$ and exiting domain $D_n$. Fix $\epsilon>0$. One can choose a sufficiently large $n$ and choose a constant $\tilde \eta(\epsilon, n, \gamma, \beta)$ so that 
for stepsize $\eta \le \tilde \eta(\epsilon, n, \gamma, \beta)$, we have
\begin{equation} \label{eq:bound}
\left|  \mathbb{E}\left[ \hat \Theta_{a_1 \rightarrow a_2}^{\beta, n} \right] - \mathbb{E}\left[\Theta_{a_1 \rightarrow a_2}^{\beta}\right] \right| < 2 \epsilon.
\end{equation}
To see this, we use Theorems 3.9 and 3.11 in \cite{Gobet2017}. 
Write $\hat \theta_{a_1 \rightarrow a_2}^{\beta, n}$ as the exit time of $X_k$ (from the overdamped discrete dynamics) starting from $a_1$ and exiting the domain $D_n$. Then one can also expect that when $n$ is large and the step size is small, the mean of $\hat \theta_{a_1 \rightarrow a_2}^{\beta, n}$ will be close to $\mathbb{E}\left[\theta_{a_1 \rightarrow a_2}^{\beta}\right]$, the continuous exit time of the overdamped diffusion given in \eqref{eq:exit-time-over}. See Proposition~\ref{prop:anti-disc-cont} in the next section. 
It then follows from \eqref{eq-exit-acceleration} and \eqref{eq:bound} that for large enough $\beta, n$, and sufficiently small stepsize $\eta$, we obtain the acceleration in discrete time:
\begin{equation*}
\mathbb{E}\left[\hat \Theta_{a_1 \rightarrow a_2}^{\beta, n}\right]
\Big/\mathbb{E}\left[\hat \theta_{a_1 \rightarrow a_2}^{\beta, n}\right]  
= \mathcal{O}\left(\sqrt{\mu^{\ast}(\sigma)}\right) < 1.
\end{equation*}

\subsection{Non-reversible Langevin dynamics}




\cite{Landim2017} (Theorem 5.2) showed that the expected time of the non-reversible diffusion $X(t)$ in \eqref{eqn:nonreversible1} starting from $a_1$ and hitting a small neighborhood of $a_2$ is given by 
\begin{equation} \label{eq:exit-time}
\mathbb{E}\left[\tau_{a_1 \rightarrow a_2}^{\beta}\right] 
= [1+ o_{\beta}(1)] \cdot\frac{2 \pi}{\mu_J^{\ast}}\cdot e^{\beta [F(\sigma)- F (a_1)]}\cdot\sqrt{ \frac{|\det\mbox{Hess }F (\sigma)|}{\det\mbox{Hess }F(a_1)}}.
\end{equation}
Here, 
$-\mu_J^{\ast}$ is the unique negative eigenvalue of the matrix $A_J \cdot \mathbb{L}^{\sigma}$, where $\mathbb{L}^{\sigma} := \mbox{Hess }F (\sigma)$, the Hessian of $F$ at the saddle point $\sigma$. 
We denote $u$ for the corresponding eigenvector of $A_{J} \mathbb{L}^{\sigma}$ for the eigenvalue $- \mu_J^{\ast}<0$.  
and $\mathbb{L}^{\sigma}=S^{T}DS$,
for a real orthogonal matrix $S$, where $D= \text{diag} (\mu_1, \mu_2, \ldots, \mu_d)$ 
with $\mu_1< 0 < \mu_2 < \ldots < \mu_d$ being the eigenvalues of $\mathbb{L}^{\sigma}$. 


\begin{proposition}\label{prop:eigenvalue-compare}
We have $\mu_J^{\ast} \ge \mu^{\ast}(\sigma)$. As a consequence, 
\begin{equation}\label{eq:exit-compare-cont}
\lim_{\beta \rightarrow \infty} \mathbb{E}\left[\tau_{a_1 \rightarrow a_2}^{\beta}\right]\Big/\mathbb{E}\left[\tau_{a_1 \rightarrow a_2}^{\beta}\right]_{J=0} = \mu^{\ast}(\sigma)/\mu_J^{\ast} \le 1.
\end{equation}
The equality is attained if and only if $u$ is a singular vector of $J$ satisfying $Ju = 0$.
\end{proposition}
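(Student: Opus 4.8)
The plan is to argue directly with the real eigenvector $u$ from \eqref{def-eigvec-u} (we may take $u$ real since $-\mu_J^{\ast}$ is a real eigenvalue), using that $\mathbb{L}^{\sigma}$ is invertible (no zero eigenvalue, by the standing assumption on the saddle) and that $\mu_J^{\ast}>0$. The one idea needed is to pair the eigen-relation $A_J\mathbb{L}^{\sigma}u=-\mu_J^{\ast}u$ not with $u$ (which would leave the uncontrolled term $u^{T}J\mathbb{L}^{\sigma}u$) but with $w:=\mathbb{L}^{\sigma}u$: since $J$ is anti-symmetric, $w^{T}Jw=0$, so
\[
\|\mathbb{L}^{\sigma}u\|^{2}=\langle A_J\mathbb{L}^{\sigma}u,\mathbb{L}^{\sigma}u\rangle=-\mu_J^{\ast}\,u^{T}\mathbb{L}^{\sigma}u .
\]
As $\mathbb{L}^{\sigma}$ is invertible and $u\neq 0$, the left side is strictly positive, so $u^{T}\mathbb{L}^{\sigma}u<0$, and the displayed identity can be written $\|\mathbb{L}^{\sigma}u\|^{2}+\mu_J^{\ast}\,u^{T}\mathbb{L}^{\sigma}u=0$.

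Next I would diagonalize via \eqref{eqn:S}: writing $y:=Su$ with $D=\mathrm{diag}(\mu_1,\dots,\mu_d)$, $\mu_1<0<\mu_2<\dots<\mu_d$, the two relations become $\sum_{i}\mu_i(\mu_i+\mu_J^{\ast})y_i^{2}=0$ and $\sum_{i}\mu_i y_i^{2}<0$. The second forces $y_1\neq0$ (all other $\mu_i>0$). Isolating the $i=1$ term of the first,
\[
\mu_1(\mu_1+\mu_J^{\ast})y_1^{2}=-\sum_{i=2}^{d}\mu_i(\mu_i+\mu_J^{\ast})y_i^{2}\le 0,
\]
and since $\mu_1<0$ and $y_1^{2}>0$, this gives $\mu_1+\mu_J^{\ast}\ge 0$, i.e. $\mu_J^{\ast}\ge-\mu_1=\mu^{\ast}(\sigma)$. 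The ratio statement \eqref{eq:exit-compare-cont} then follows by substituting this into the Eyring--Kramers formula \eqref{eq:exit-time} (the $J=0$ case yielding $\mu^{\ast}(\sigma)$ in place of $\mu_J^{\ast}$).

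For the equality case I would run a tight chain of equivalences. First, $\mu_J^{\ast}=\mu^{\ast}(\sigma)\iff\mu_1+\mu_J^{\ast}=0\iff$ the left side of the last display vanishes (as $\mu_1 y_1^{2}\neq0$) $\iff$ its right side vanishes $\iff y_i=0$ for all $i\ge2$ (each summand is nonnegative with strictly positive coefficient $\mu_i(\mu_i+\mu_J^{\ast})$); and $y_i=0$ for $i\ge2$ is exactly $(Su)_i=0$ for $i=2,\dots,d$. Second, I would identify this with $Ju=0$: if $(Su)_i=0$ for $i\ge2$ then $u$ is an eigenvector of $\mathbb{L}^{\sigma}$ with eigenvalue $\mu_1$, so $-\mu_J^{\ast}u=A_J\mathbb{L}^{\sigma}u=\mu_1 u+\mu_1 Ju$, and since we are in the equality case $\mu_J^{\ast}=-\mu_1$, hence $\mu_1 Ju=0$ and $Ju=0$; conversely, if $Ju=0$ then $u^{T}J\mathbb{L}^{\sigma}u=-(Ju)^{T}\mathbb{L}^{\sigma}u=0$, so pairing the eigen-relation with $u$ gives $\sum_i(\mu_i+\mu_J^{\ast})y_i^{2}=0$ with all coefficients nonnegative (using $\mu_1+\mu_J^{\ast}\ge0$ already shown) and those with $i\ge2$ strictly positive, forcing $y_i=0$ for $i\ge2$. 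Finally I would note $Ju=0$ means precisely that $u$ is a singular vector of $J$ with singular value $0$.

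I do not expect a serious obstacle: the computation is short. The only genuine idea is the choice of test vector $\mathbb{L}^{\sigma}u$ (rather than $u$) so that anti-symmetry of $J$ kills the cross term, and the only place requiring care is to keep the equality analysis as an honest chain of "if and only if"s so that the two stated characterizations of equality (through $(Su)_i=0$ and through $Ju=0$) both come out exactly.
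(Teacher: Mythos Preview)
Your proposal is correct and follows essentially the same line as the paper's proof. Both hinge on the same key step: pairing the eigen-relation $A_J\mathbb{L}^{\sigma}u=-\mu_J^{\ast}u$ with $\mathbb{L}^{\sigma}u$ so that the anti-symmetry of $J$ kills the cross term, yielding $\|\mathbb{L}^{\sigma}u\|^{2}=-\mu_J^{\ast}\,\langle u,\mathbb{L}^{\sigma}u\rangle$, and then diagonalizing via $\mathbb{L}^{\sigma}=S^{T}DS$. The only cosmetic differences are that you take $u$ real from the outset (legitimate, since $A_J\mathbb{L}^{\sigma}$ is a real matrix with real eigenvalue $-\mu_J^{\ast}$), whereas the paper keeps $u$ possibly complex and argues that $u^{\ast}(\mathbb{L}^{\sigma})^{T}J\mathbb{L}^{\sigma}u$ is purely imaginary and hence zero; and after diagonalizing you rearrange as $\sum_i\mu_i(\mu_i+\mu_J^{\ast})y_i^{2}=0$ and isolate the $i=1$ term, while the paper writes the ratio $\mu_J^{\ast}=\big(\sum_i\mu_i^{2}y_i^{2}\big)\big/\big(-\sum_i\mu_i y_i^{2}\big)$ and bounds it below by $|\mu_1|$ --- these are the same inequality. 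Your treatment of the equality case is somewhat more explicit than the paper's but reaches the same characterization.
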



Proposition \ref{prop:eigenvalue-compare} shows that if $J$ is not singular, the non-reversible dynamics is generically faster than the reversible dynamics in the sense of smaller mean exit times. This holds for the discrete dynamics as well, since the exit time for the discrete dynamics is close to that of the continuous dynamics for sufficiently small stepsizes, see e.g. \cite{Gobet2005}.




Next let us discuss the discrete dynamics \eqref{eqn:nonreversible:discrete}. Unlike the underdamped diffusion which has a non-invertible diffusion matrix, the non-reversible Langevin diffusion in \eqref{eqn:nonreversible1} is uniformly elliptic. So to show the discrete exit time is close to the continuous exit time for non-reversible Langevin dynamics, we can apply the results in \cite{Gobet2005} and proceed as follows. Let $B_n$ be the ball centered at zero with radius $n$ in $\mathbb{R}^d$. 
For $n$ sufficiently large, we always have $a_1, a_2, \sigma \in B_n$. Let $\bar D_n= B_n \setminus O(a_2) $, 
where $O(a_2)$ denotes a small neighborhood of $a_2$. 
It follows that the set $\bar D_n$ is bounded for each $n$, and it increases to the set $O^c(a_2)$ as $n$ is sent to infinity.
Write $\hat \tau_{a_1 \rightarrow a_2}^{\beta, n}$ for the first time that the discrete-time dynamics 
starting from $a_1$ and exit the region $\bar D_n$. Then we can obtain from \cite{Gobet2005} the following result, which implies the exit times of non-reversible Langevin dynamics is smaller compared with that of reversible Langevin dynamics. 

\begin{proposition}\label{prop:anti-disc-cont}
Fix the antisymmetric matrix $J$, the temperature parameter $\beta$, and $\epsilon>0$. One can choose a sufficiently large $n$ and a constant $\bar \eta(\epsilon, n, \beta)$
so that for stepsize $\eta \le \bar \eta(\epsilon, n, \beta)$, we have
\begin{equation*}
\left|  \mathbb{E}\left[ \hat \tau_{a_1 \rightarrow a_2}^{\beta, n} \right] - \mathbb{E}\left[\tau_{a_1 \rightarrow a_2}^{\beta}\right] \right| < 2 \epsilon.
\end{equation*}
It then follows from Proposition~\ref{prop:eigenvalue-compare} that for large $\beta$ we have 
\begin{equation}\label{eq:exit-compare-disc}
\mathbb{E}\left[ \hat \tau_{a_1 \rightarrow a_2}^{\beta, n} \right]\Big/\mathbb{E}\left[\hat \tau_{a_1 \rightarrow a_2}^{\beta, n}\right]_{J=0}< 1,
\end{equation}
provided that $(Su)_i \ne 0$ for some $i \in \{2, \ldots, d\}$ which occurs if and only if $u$ is a singular vector of $J$ satisfying $Ju = 0$.
\end{proposition}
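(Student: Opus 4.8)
The plan is to bound the difference between the discrete and continuous exit times by inserting, as an intermediate quantity, the continuous-time exit time from the truncated domain $\bar D_n=B_n\setminus O(a_2)$, and then to control the two resulting gaps separately. Write $\tau^{\beta}_{\bar D_n}$ for the exit time of the continuous-time diffusion \eqref{eqn:nonreversible} started at $a_1$ from $\bar D_n$. By the triangle inequality it suffices to make both $|\mathbb{E}[\tau^{\beta}_{\bar D_n}]-\mathbb{E}[\tau_{a_1 \rightarrow a_2}^{\beta}]|$ and $|\mathbb{E}[\hat \tau_{a_1 \rightarrow a_2}^{\beta, n}]-\mathbb{E}[\tau^{\beta}_{\bar D_n}]|$ smaller than $\epsilon$: the former by choosing $n$ large, the latter by choosing $\eta$ small, and then combining.

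For the first gap I would argue by monotone convergence. Since $\bar D_m\subseteq\bar D_n$ for $m\le n$, the exit time $\tau^{\beta}_{\bar D_n}$ is nondecreasing in $n$, and since $O(a_2)$ is removed from every $\bar D_n$ we have $\tau^{\beta}_{\bar D_n}\le\tau_{a_1 \rightarrow a_2}^{\beta}$. On almost every path the trajectory $\{X_s\}_{s\in[0,\tau_{a_1 \rightarrow a_2}^{\beta}]}$ is compact and avoids $O(a_2)$ before $\tau_{a_1 \rightarrow a_2}^{\beta}$, so for $n$ large enough (depending on the path) $\tau^{\beta}_{\bar D_n}=\tau_{a_1 \rightarrow a_2}^{\beta}$; hence $\tau^{\beta}_{\bar D_n}\uparrow\tau_{a_1 \rightarrow a_2}^{\beta}$ almost surely. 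Because $\mathbb{E}[\tau_{a_1 \rightarrow a_2}^{\beta}]<\infty$ by \eqref{eq:exit-time}, monotone convergence gives $\mathbb{E}[\tau^{\beta}_{\bar D_n}]\uparrow\mathbb{E}[\tau_{a_1 \rightarrow a_2}^{\beta}]$, and I fix $n$ so that this gap is below $\epsilon$.

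For the second gap, with $n$ now fixed, I would invoke \cite{Gobet2005} on the weak approximation of exit times of diffusions by their Euler schemes. The diffusion \eqref{eqn:nonreversible} has constant, non-degenerate diffusion coefficient $\sqrt{2\beta^{-1}}I$ and a $C^2$ drift $-A_J\nabla F$ (recall $F\in C^3$); if $O(a_2)$ is taken to be a ball of small radius contained in $B_n$ then $\bar D_n$ has a smooth boundary $\partial B_n\cup\partial O(a_2)$ along which the diffusion is uniformly elliptic, so in particular the non-characteristic boundary condition required in \cite{Gobet2005} holds. Their estimates then yield $\mathbb{E}[\hat \tau_{a_1 \rightarrow a_2}^{\beta, n}]\to\mathbb{E}[\tau^{\beta}_{\bar D_n}]$ as $\eta\to0$, from which I would read off the threshold $\bar \eta(\epsilon, n, \beta)$. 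Combining with the first step via the triangle inequality gives $|\mathbb{E}[\hat \tau_{a_1 \rightarrow a_2}^{\beta, n}]-\mathbb{E}[\tau_{a_1 \rightarrow a_2}^{\beta}]|<2\epsilon$, which is the asserted bound.

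For the final ratio, I would apply the bound just obtained both for the given $J$ and for $J=0$, and combine with Proposition~\ref{prop:eigenvalue-compare} and \eqref{eq:exit-time}, which give
\begin{equation*}
\frac{\mathbb{E}[\tau_{a_1 \rightarrow a_2}^{\beta}]}{\mathbb{E}[\tau_{a_1 \rightarrow a_2}^{\beta}]_{J=0}}=\frac{\mu^{\ast}(\sigma)}{\mu_J^{\ast}}\,[1+o_{\beta}(1)],
\end{equation*}
which is strictly below $1$ for all large $\beta$ precisely when $(Su)_i\ne0$ for some $i\in\{2,\dots,d\}$ (equivalently $\mu_J^{\ast}>\mu^{\ast}(\sigma)$, equivalently $u$ is not a singular vector of $J$ with $Ju=0$). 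Since both continuous exit times are of order $e^{\beta[F(\sigma)-F(a_1)]}$ while their difference is of that same order with the strictly positive constant $2\pi(1/\mu^{\ast}(\sigma)-1/\mu_J^{\ast})\sqrt{|\det\mbox{Hess}\,F(\sigma)|/\det\mbox{Hess}\,F(a_1)}$ in front, I would choose $\epsilon$ to be a small enough fraction of that difference that the $2\epsilon$-perturbations cannot bridge it, forcing $\mathbb{E}[\hat \tau_{a_1 \rightarrow a_2}^{\beta, n}]/\mathbb{E}[\hat \tau_{a_1 \rightarrow a_2}^{\beta, n}]_{J=0}<1$. The hardest part is the second step: verifying, for the two-component boundary of $\bar D_n$, the precise smoothness and non-characteristic-boundary hypotheses under which \cite{Gobet2005} yields a quantitative convergence rate for the Euler-scheme exit time, and tracking that the constants there — hence $\bar \eta$ — are allowed to depend on $n$ (which is harmless since $n$ is fixed before $\eta$); the dependence on $\beta$ is also harmless because $\beta$ is fixed at the outset.
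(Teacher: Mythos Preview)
Your proof is correct and follows essentially the same approach as the paper: introduce the continuous-time exit from the truncated domain as an intermediate quantity, control the first gap by monotone convergence and the second by invoking \cite{Gobet2005}, then combine via the triangle inequality. Your treatment is in fact slightly more detailed than the paper's in spelling out the monotone-convergence step and the final ratio argument; the only cosmetic difference is that the paper verifies the hypotheses of \cite[Theorem~4.2]{Gobet2005} via the exterior cone condition on $\bar D_n$, whereas you note smoothness of the boundary, which is a stronger but equally valid route.
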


\section{Numerical Illustrations}
\paragraph{{Choice of algorithm parameters.}} 

\begin{wrapfigure}{r}{0.33\columnwidth}
    \centering
     \vspace{-7pt}
    \includegraphics[width=.32\columnwidth]{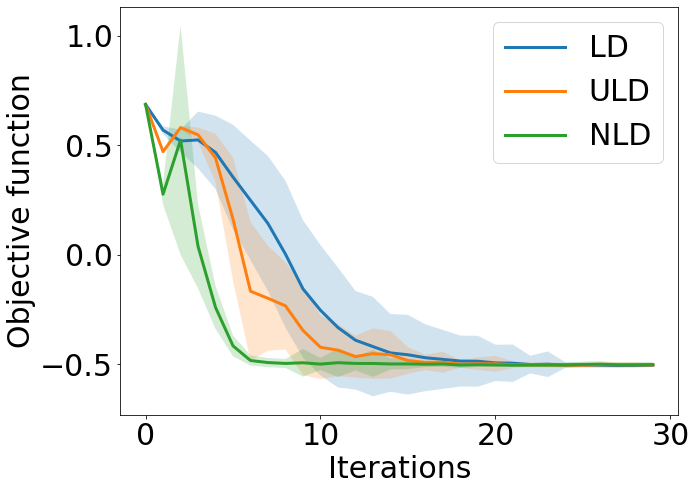}
     \includegraphics[width=.32\columnwidth]{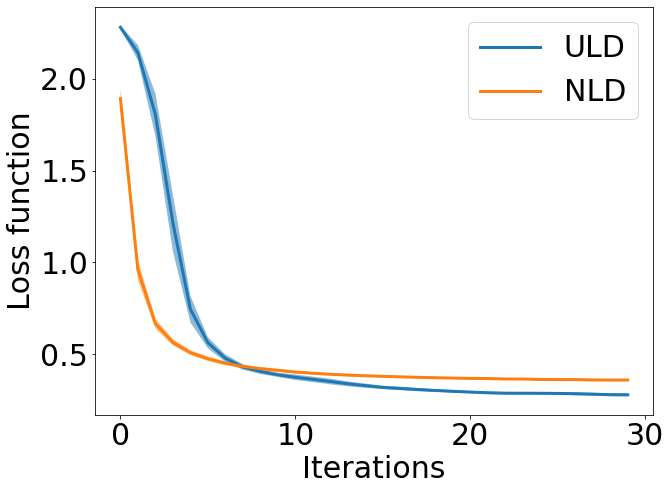}
   \vspace{-5pt}
    \caption{Choice of algorithm parameters and comparing ULD and NLD.}
   \vspace{-4pt}
    \label{fig2}
\end{wrapfigure}

In practice, for the NLD algorithm, the matrix $J$ can be chosen as a random anti-symmetric matrix. For quadratic objectives, there is a formula for optimal $J$ matrix; see e.g. \cite{Lelievre-optdrift}. 
For the ULD algorithm, we can take the parameter $\gamma = 2\sqrt{m}$ as predicted by our theory (Lemma~\ref{lem:keylemma}) for quadratics. On the top panel of Figure \ref{fig2}, we compare ULD and NLD to LD for the double well example with random initialization over 100 runs where $J$ is chosen randomly and $\gamma = 2\sqrt{m}$. In this simple example, we observe NLD and ULD have smaller mean exit times (from a barrier) compared to LD.

\paragraph{{Comparing ULD and NLD algorithms.}}

In general, it is not easy to compare 
the theoretical performance of ULD and NLD algorithms.
However, in some regimes, our theory predicts one
is better than the other.
For example, when the smallest eigenvalue $m$ is close
to the largest eigenvalue $M$, NLD will not improve much
upon LD, but ULD will improve upon LD if $m$ is small and ULD will be faster than NLD.
In this case, ULD has better performance than NLD. 
On the bottom panel of Figure \ref{fig2}, we provide an example for training fully-connected neural networks on MNIST where ULD was faster when both methods were tuned.

\vspace{-0.05in}
\section{Conclusion}
\vspace{-0.05in}
{Langevin Monte Carlo are powerful tools for sampling from a target distribution 
as well as for optimizing a non-convex objective. 
The classic Langevin dynamics (LD) is based on the first-order Langevin diffusion which is reversible in time. 
We studied the two variants that are based on non-reversible Langevin diffusions: 
the underdamped Langevin dynamics (ULD) and the Langevin dynamics with a non-symmetric
drift (NLD). We showed that both ULD and NLD can improve upon the recurrence
time for LD in \cite{pmlr-v75-tzen18a}
and discussed the amount of improvement. We also showed that non-reversible variants
can exit the basin of attraction of a local minimimum faster when
the objective has two local minima separated by a saddle point and discussed the amount of improvement. 
By breaking the reversibility in the Langevin dynamics, our results
quantify the improvement in performance and 
fill a gap between the theory and practice of non-reversible Langevin algorithms.}

\section*{Acknowledgements and Disclosure of Funding}

{The authors thank four anonymous referees for helpful suggestions.}
The authors are very grateful to Emmanuel Gobet, Claudio Landim, Insuk Seo and Gabriel Stoltz for helpful comments and discussions. 
{The authors also thank Yuanhan Hu for the help with numerical experiments.}
Xuefeng Gao acknowledges support from Hong Kong RGC Grants 24207015 and 14201117. 
Mert G\"{u}rb\"{u}zbalaban's research is supported in part by the grants NSF DMS-1723085 and NSF CCF-1814888. 
Lingjiong Zhu is grateful to the support from the grant NSF DMS-1613164.

\bibliographystyle{alpha}
\bibliography{langevin}


\appendix

\section{Figures}\label{sec:fig}

\begin{figure}[htb]
\begin{center}
\includegraphics[scale=0.4]{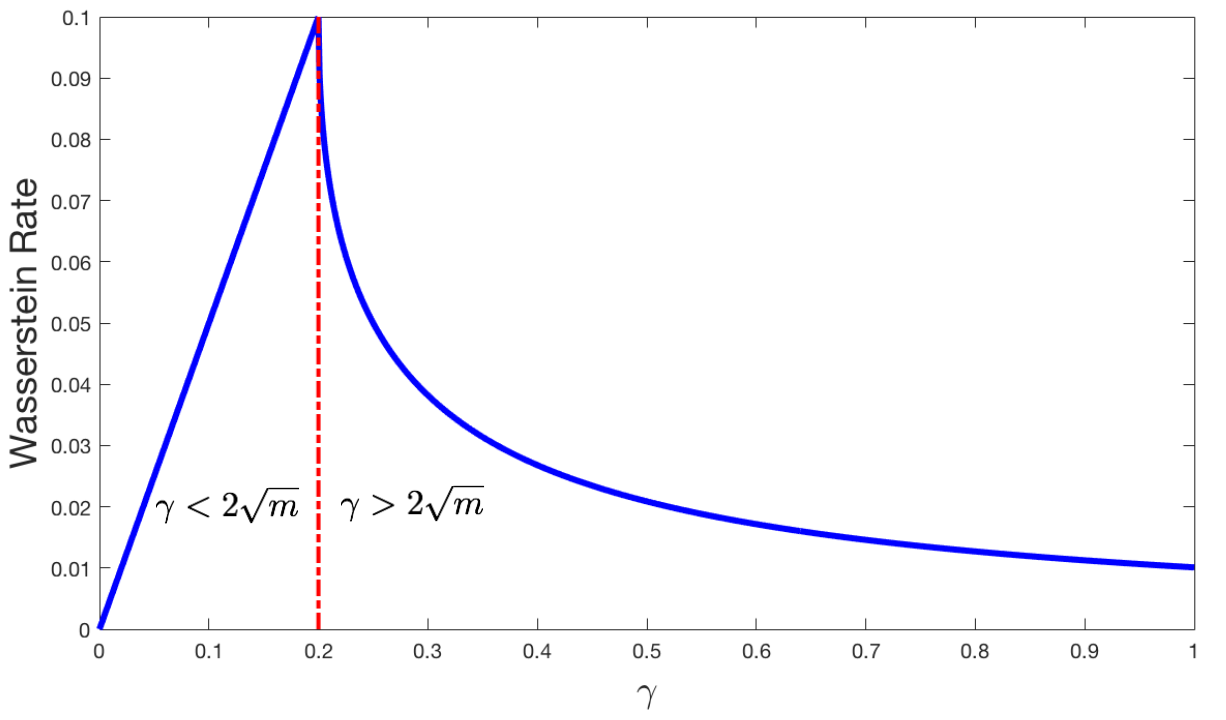}
\caption{\label{fig-2}
The norm $\Vert e^{-tH_{\gamma}}\Vert$ is optimized for the choice of $\gamma = 2\sqrt{m}$. 
This is illustrated in the figure for $m = 0.01$.}
\end{center}
\end{figure} 


\begin{figure}[htb] 
\begin{center}
\includegraphics[scale=0.40]{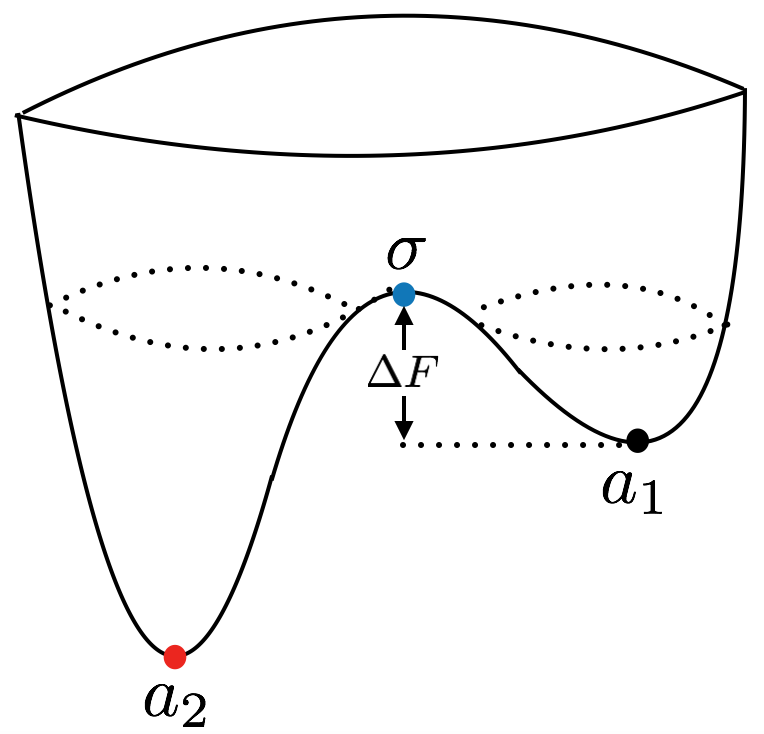}
\caption{\label{fig-1}A double-well example. 
Here, $\Delta F=F(\sigma)-F(a_{1})$.
There are exactly two local minima $a_1$ and $a_2$ which are separated with a saddle point $\sigma$.}
\end{center} \label{fig:double-well}
\end{figure}

\section{Proof of results in Section~\ref{sec:rec:escape}} \label{sec:proof-underdamped}

\subsection{Proof of Lemma~\ref{lem:keylemma}}
\begin{proof}
Let $H$ be a symmetric positive definite matrix with eigenvalue decomposition
$H= Q D Q^T$,
where $D$ is diagonal with eigenvalues in increasing order  
$m:=\lambda_{1}\leq\lambda_{2}\leq\cdots\leq\lambda_{d}=:M$ 
of the matrix $H$.
Recall $H_{\gamma}$ from \eqref{def-Hgamma0}.
Note that 
\begin{equation*}
H_\gamma = \left[
\begin{array}{cc}
Q & 0
\\
0 & Q
\end{array}
\right]
G_\gamma   
\left[
\begin{array}{cc}
Q^T & 0
\\
0 & Q^T
\end{array}
\right], \quad G_\gamma : = \left[
\begin{array}{cc}
\gamma I & D
\\
-I & 0
\end{array}
\right].
\end{equation*}
Therefore $H_\gamma$ and $G_\gamma$ have the same eigenvalues. 
Due to the structure of $G_\gamma$, it can be seen that there exists a permutation matrix $P$ such that 
\begin{equation}\label{eq-perm-diagonalize}
T_\gamma : = P G_\gamma P^T  = \left[
\begin{array}{cccc}
T_1(\gamma) & 0 & 0 & 0 
\\
0 & T_2(\gamma) & 0  & 0 
\\
\vdots & \cdots & \ddots & \vdots
\\
0 & 0 & 0 & T_d(\gamma) 
\end{array}
\right]\,,
\quad
\text{where}
\quad
T_i(\gamma)  := \left[\begin{array}{cc}
\gamma & \lambda_i 
\\
-1  & 0 
\end{array} 
\right]\,,
\end{equation}
with $i=1,2,\ldots,d$, and $T_{i}(\gamma)$ are $2\times 2$ block matrices with the eigenvalues: 
\begin{equation}\label{mu:i:pm}
\mu_{i,\pm} := \frac{\gamma \pm \sqrt{\gamma^2 - 4\lambda_i}}{2}\,.\qquad i=1,2,\ldots,d\,. 
\end{equation}
We observe that $T_\gamma$ and $G_\gamma$ (and therefore $H_\gamma$) have the same eigenvalues
and the eigenvalues of 
$T_\gamma$ are determined by the eigenvalues of the $2\times 2$ block matrices $T_i(\gamma)$.

Since
$H_\gamma$ is unitarily equivalent to the matrix $T_\gamma$, i.e. there exists a unitary matrix $U$ such that $H_\gamma = U T_\gamma U^*$, we have 
 $ \left\| e^{-tH_\gamma} \right\| = \left\| U e^{-t T_\gamma} U^*\right\| = \left\| e^{-t T_\gamma}\right\|$.
Since $T_\gamma $ is a block diagonal matrix with $2\times 2$ blocks $T_i(\gamma)$
we have 
$\left\| e^{-t T_\gamma}\right\| = \max_{1\leq i\leq d} \left\| e^{-t T_i(\gamma) }\right\|$. 
Assume that $\gamma^2 - 4 \lambda_1 =\gamma^{2}-4m\leq 0$ so that the eigenvalues $\mu_{i,\pm}$ of $T_i(\gamma)$
(see Eqn. \eqref{mu:i:pm}) 
are real when $\gamma = 2\sqrt{m}$ and complex when $\lambda < 2\sqrt{m}$. 
Note that
\begin{equation}\label{eq-exp-in-t}
\left\| e^{-t T_i(\gamma)}\right\|  = e^{-t \gamma / 2} \left\|e^{-t\tilde{T}_i(\gamma)}\right\|\,,  
\qquad
\text{where}\quad \tilde{T}_i(\gamma) := T_i(\gamma) - \frac{\gamma}{2} I,\qquad 1\leq i\leq d.
\end{equation}

We consider $\gamma \in (0,2\sqrt{m}]$. Depending on the value of $\lambda_i$ and $\gamma$, there are two cases: 

\textbf{Case 1}. If $\gamma < 2\sqrt{m}$ or ($\lambda_i > m$ and $\gamma = 2\sqrt{m}$), then $\tilde{T}_i(\gamma)$ has purely imaginary eigenvalues that are complex 
conjugates which we denote by 
$\tilde{\mu}_{i,\pm} = \pm i \frac{\sqrt{4\lambda_i - \gamma^2}}{2}$, $1\leq i\leq d$.
We will show that the last term in \eqref{eq-exp-in-t} stays bounded due to the imaginariness of the eigenvalues of  $\tilde{T}_i(\gamma)$. It is easy to check that $2\times 2$ matrix $\tilde{T}_i(\gamma)$ have the eigenvectors $v_{i,\pm} = [{\mu}_{i,\pm}, -1]^T $. 
If we set
	$ G_i := \begin{bmatrix} v_{i,+} & v_{i,-} 
	\end{bmatrix}  \in \mathbb{C}^{2\times 2},
    $ 
the eigenvalue decomposition of $\tilde{T}_i(\gamma)$  is given by 

	$$ \tilde{T}_i(\gamma) = G_i  \begin{bmatrix} \tilde{\mu}_{i,+} & 0 \\ 
																	0            &  \tilde{\mu}_{i,-} 
										 \end{bmatrix}G_i^{-1}, \quad \mbox{where} \quad 
										 G_i^{-1} = \frac{1}{\det G_i}\begin{bmatrix} -1 & - \mu_{i,-} \\
										 											 1 & \mu_{i,+}
										 \end{bmatrix}\,,
    $$
and $\det G_i = i \sqrt{4\lambda_i - \gamma^2}$.
We can compute that 
	\begin{align*} 
	e^{-t \tilde{T}_i(\gamma)} &= G_i  \begin{bmatrix}  e^{-it \sqrt{4\lambda_i - \gamma^2}/2} & 0 \\ 0 &  e^{it \sqrt{4\lambda_i - \gamma^2}/2} \end{bmatrix}  G_i^{-1} \\
				 &= \frac{1}{\det G_i} \begin{bmatrix}
				 			\mu_{i,+} & \mu_{i,-} \\
				 			     -1       &       -1
				 \end{bmatrix} \begin{bmatrix} - e^{-it \sqrt{4\lambda_i - \gamma^2}/2} & - \mu_{i,-}  e^{-it \sqrt{4\lambda_i - \gamma^2}/2} \\ 
				 e^{it \sqrt{4\lambda_i - \gamma^2}/2} & \mu_{i,+}e^{it \sqrt{4\lambda_i - \gamma^2}/2} 
				 \end{bmatrix}  \\
				 &= \frac{1}{i\sqrt{4\lambda_i - \gamma^2}} 
				 \begin{bmatrix}
				 		2 \mbox{Imag} \left( \mu_{i,-}  e^{it \sqrt{4\lambda_i - \gamma^2}/2}\right)  &  2i |\mu_{i,+}|^2 \sin\left(t \sqrt{4\lambda_i - \gamma^2} /2\right) \\
				 		-2i \sin \left(t\sqrt{4\lambda_i - \gamma^2}/2\right)  & 2 \mbox{Imag}\left(\mu_{i,+} e^{it \sqrt{4\lambda_i - \gamma^2}/2}\right)\nonumber
				 \end{bmatrix}\,,
	\end{align*}
where $\mbox{Imag}(a+ib) := ib$ denotes the imaginary part of a complex number.	As a consequence, by taking componentwise absolute values  
\begin{align}
  \left\| e^{- t \tilde{T}_i(\gamma)} \right\| &\leq \frac{1}{\sqrt{4\lambda_i - \gamma^2}} \left \| \begin{bmatrix} 
  								2 | \mu_{i,-}|  & 2 | \mu_{i,+}|^2 \\
  								2                      & 2 | \mu_{i,+} |
  							\end{bmatrix} \right\| 
							 =   \frac{1}{\sqrt{4\lambda_i - \gamma^2}} \left \| \begin{bmatrix} 
  								2 \sqrt{\lambda_i}  & 2 \lambda_i \\
  								2                      & 2 \sqrt{\lambda_i}
  							\end{bmatrix} \right\|  \nonumber  \\
  							&= \frac{1}{\sqrt{4\lambda_i - \gamma^2}} \left \| \begin{bmatrix} 
  								2 \sqrt{\lambda_i} \nonumber \\
  								2                     
  							\end{bmatrix}  
  							\begin{bmatrix} 
  							1 & \sqrt{\lambda_i}
  							\end{bmatrix} \right \|
  	                                   = \frac{1}{\sqrt{4\lambda_i - \gamma^2}} \left \| \begin{bmatrix} 
  								2 \sqrt{\lambda_i}  \nonumber \\
  								2                     
  							\end{bmatrix} \right \| 
  							\left\| \begin{bmatrix} 
  							1 & \sqrt{\lambda_i}
  							\end{bmatrix} \right \| \nonumber \\ 
  							&= \frac{2(1+\lambda_i)}{\sqrt{4\lambda_i - \gamma^2}}\,, \label{ineq-resolvent-bound}
\end{align} 	
where the second from last equality used the fact that the 2-norm of a rank-one matrix is equal to its Frobenius norm. 
\footnote{The 2-norm of a rank-one matrix $R = uv^*$ should be exactly equal to $\sigma = \| u\| \| v\|$.
This follows from the fact that we can write $R = \sigma \tilde{u} \tilde{v}^T$ where $\tilde{u}$ and $\tilde{v}$ have unit norm. This would be a singular value decomposition of $R$, showing that all the singular values are zero except a singular value at $\sigma$. Because the 2-norm is equal to the largest singular value, the 2-norm of $R$ is equal to $\sigma$.}
Then, it follows from \eqref{eq-exp-in-t} that
$\left\| e^{-t T_i(\gamma)}\right\|  = e^{-t \gamma / 2} \left\|e^{-t\tilde{T}_i(\gamma)}\right\| \leq  \frac{2(1+\lambda_i)}{\sqrt{4\lambda_i - \gamma^2}}
e^{-t \gamma / 2}$,
which implies 
$\left\| e^{-t H_\gamma}\right\| =\left\| e^{-t T_\gamma}\right\| 
\leq \max_{1\leq i\leq d}\left\| e^{-t T_i(\gamma)}\right\|  \leq \frac{2 (1+M)}{\sqrt{4m-
\gamma^2}}  e^{-t \gamma / 2}$,
provided that $\gamma^2 - 4m < 0$. 
In particular, if we choose $\hat{\varepsilon}=1-\frac{\gamma}{2\sqrt{m}}$ for any $\hat{\varepsilon}> 0$, we obtain
\begin{equation*}
\left\| e^{-t H_\gamma}\right\| \leq \frac{1+M}{\sqrt{m(1-(1-\hat{\varepsilon})^{2})}} e^{-\sqrt{m}(1-\hat{\varepsilon})t}. 
\end{equation*}
The proof for \textbf{Case 1} is complete.

\textbf{Case 2}. If $\gamma = 2\sqrt{m}$ and $\lambda_i = m$, then $\tilde{T}_i(\gamma)$ has double eigenvalues at zero and is not diagonalizable. It admits the Jordan decomposition
$$\tilde{T}_i(\gamma) = G_i  \begin{bmatrix} 0 & 1 \\ 
0 & 0
\end{bmatrix}G_i^{-1} \quad \mbox{with} \quad 
G_i = \begin{bmatrix} 
    \sqrt{m} & 1 \\
    -1 & 0 
\end{bmatrix} \quad \mbox{and} \quad G_i^{-1} = \begin{bmatrix} 
    0 & -1 \\
    1 & \sqrt{m} 
\end{bmatrix}.
$$
By a direct computation, we obtain
$$e^{-t\tilde{T}_i(\gamma)} = G_i  \begin{bmatrix} 1 & -t \\ 
0 & 1
\end{bmatrix}G_i^{-1} = \begin{bmatrix}
1-t\sqrt{m} & -tm \\
t & 1+t\sqrt{m}
\end{bmatrix}.
$$
A simple computation reveals 
\begin{equation}\label{eq:exp-T-tilde}
\left\| e^{-t\tilde{T}_i(\gamma)}\right\|  \leq \sqrt{\mbox{Tr}\left(e^{-t\tilde{T}_i(\gamma)} e^{-t\tilde{T}_i(\gamma)^T}\right)}=\sqrt{2 + (m+1)^2 t^2 }.
\end{equation}
To finish the proof of \textbf{Case 2}, let $\gamma = 2\sqrt{m}$. We compute 
\begin{align*}
\max_{1\leq i \leq d} \left\| e^{-t\tilde{T}_i(\gamma)}\right\|  &= \max\left\{ \max_{i: \lambda_i = m} \left\| e^{-t\tilde{T}_i(\gamma)}\right\|  , \max_{i: \lambda_i > m} \left\| e^{-t\tilde{T}_i(\gamma)}\right\| \right\}\\
   &\leq  \max\left\{ \sqrt{2 + (m+1)^2 t^2 }, \max_{i: \lambda_i > m}  \frac{(1+\lambda_i)}{\sqrt{\lambda_i-m}}\right\},
\end{align*}
where we used \eqref{ineq-resolvent-bound} and \eqref{eq:exp-T-tilde} in the last inequality. We conclude from \eqref{eq-exp-in-t} for \textbf{Case 2}. 
\end{proof}


\subsection{Proof of Theorem \ref{MainThm1}}

The main result we use to prove Theorem \ref{MainThm1} is the following proposition. 
The proof of the following result will be presented later in Section \ref{proof:two:results}.

\begin{proposition}\label{prop:key}
Assume $\gamma=2\sqrt{m}$.
Fix any $r>0$ and 
\begin{equation*}
0<\varepsilon
<\min\left\{\overline{\varepsilon}_{1}^{U},\overline{\varepsilon}_{2}^{U},
\overline{\varepsilon}_{3}^{U}\right\},
\end{equation*}
where
\begin{align}
&\overline{\varepsilon}_{1}^{U}:=\sqrt{\frac{C_{H}+2+(m+1)^{2}}{(C_{H}+2)m+(m+1)^{2}}}r,\label{eqn:e1}
\\
&\overline{\varepsilon}_{2}^{U}:=2\sqrt{2}\left(C_{H}+2+(m+1)^{2}\right)^{1/4}\frac{e^{-1/2}r}{m^{1/4}},\label{eqn:e2}
\\
&\overline{\varepsilon}_{3}^{U}:=\frac{\sqrt{m}}{4L\left(\sqrt{C_{H}+2}+\frac{m+1}{\sqrt{m}}
+\frac{\sqrt{(C_{H}+2)m}+(m+1)}{8\sqrt{C_{H}+2+(m+1)^{2}}}\right)}.\label{eqn:e3}
\end{align}
Consider the stopping time:
\begin{equation*}
\tau:=\inf\left\{t\geq 0:\Vert X(t)-x_{\ast}\Vert\geq\varepsilon+re^{-\sqrt{m}t}\right\}.
\end{equation*}
For any initial point $X(0)=x$ with $\Vert x-x_{\ast}\Vert \le r$, and 
\begin{equation*}
\beta\geq\frac{256(2C_{H}m+4m+(m+1)^{2})}{m\varepsilon^{2}}
\left(d\log(2)+\log\left(\frac{2\Vert H_{2\sqrt{m}}\Vert \mathcal{T}+1}{\delta}\right)\right),
\end{equation*}
we have
\begin{equation*}
\mathbb{P}_{x}\left(\tau\in[\mathcal{T}_{\text{rec}}^{U},\mathcal{T}_{\text{esc}}^{U}]\right)\leq\delta.
\end{equation*}
\end{proposition}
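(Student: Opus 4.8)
The plan is to work with the shifted process $Y(t):=X(t)-x_{\ast}$ together with $V(t)$, treating the underdamped diffusion near $x_{\ast}$ as the Ornstein--Uhlenbeck process driven by $H_{\gamma}$ (with $\gamma=2\sqrt{m}$, $H=\nabla^{2}F(x_{\ast})$) plus a controlled nonlinear perturbation. Since $\nabla F(x_{\ast})=0$ and the Hessian is $L$-Lipschitz (Assumption~\ref{assumptions}(ii)), the Taylor remainder $G(t):=\nabla F(X(t))-HY(t)$ satisfies $\|G(t)\|\le\frac{L}{2}\|Y(t)\|^{2}$, so the augmented process $Z(t):=[V(t),Y(t)]^{T}$ obeys $dZ(t)=-H_{\gamma}Z(t)\,dt-[G(t),0]^{T}dt+\sqrt{2\gamma\beta^{-1}}I^{(2)}dB^{(2)}_{t}$, and Duhamel's formula yields
\begin{equation*}
Z(t)=\underbrace{e^{-tH_{\gamma}}Z(0)}_{Z_{\mathrm{tr}}(t)}\;-\;\underbrace{\int_{0}^{t}e^{(s-t)H_{\gamma}}[G(s),0]^{T}\,ds}_{Z_{\mathrm{nl}}(t)}\;+\;\underbrace{\sqrt{2\gamma\beta^{-1}}\int_{0}^{t}e^{(s-t)H_{\gamma}}I^{(2)}\,dB^{(2)}_{s}}_{M(t)}.
\end{equation*}
Since $\|Y(t)\|\le\|Z(t)\|$ and $\|Z(0)\|\le r$ (the velocity initialized at rest), each piece is estimated through the operator-norm bound $\|e^{-uH_{\gamma}}\|\le\sqrt{C_{H}+2+(m+1)^{2}u^{2}}\,e^{-\sqrt{m}u}$ of Lemma~\ref{lem:keylemma}(ii), which is the quantitative engine of the argument.

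The core is a stopping-time (Gr\"onwall-type) closure performed on a favorable event for the noise. By definition of $\tau$, on $\{s<\tau\}$ we have $\|Y(s)\|<\varepsilon+re^{-\sqrt{m}s}$, hence $\|G(s)\|\le\frac{L}{2}(\varepsilon+re^{-\sqrt{m}s})^{2}\le L\varepsilon^{2}+Lr^{2}e^{-2\sqrt{m}s}$. Inserting this into $Z_{\mathrm{nl}}$ and integrating the explicit kernel $\sqrt{C_{H}+2+(m+1)^{2}u^{2}}\,e^{-\sqrt{m}u}$, the $\varepsilon^{2}$-piece is bounded by a constant multiple of $\frac{L\varepsilon^{2}}{\sqrt{m}}(\sqrt{C_{H}+2}+\frac{m+1}{\sqrt{m}})$, while for $t\ge\mathcal{T}_{\text{rec}}^{U}$ the $r^{2}$-piece is of the same order, using that $u\mapsto ue^{-\sqrt{m}u}$ is decreasing past $1/\sqrt{m}\le\mathcal{T}_{\text{rec}}^{U}$ together with the defining identity $\mathcal{T}_{\text{rec}}^{U}e^{-\sqrt{m}\mathcal{T}_{\text{rec}}^{U}}=\varepsilon^{2}/(8r^{2}\sqrt{C_{H}+2+(m+1)^{2}})$. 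The constraint $\varepsilon<\overline{\varepsilon}_{3}^{U}$ is exactly what forces $\|Z_{\mathrm{nl}}(t)\|\le\frac12\varepsilon$ for $t\ge\mathcal{T}_{\text{rec}}^{U}$; similarly $\|Z_{\mathrm{tr}}(t)\|\le r\sqrt{C_{H}+2+(m+1)^{2}t^{2}}\,e^{-\sqrt{m}t}\le\frac14\varepsilon$ for $t\ge\mathcal{T}_{\text{rec}}^{U}$ by the same monotonicity, the definition of $\mathcal{T}_{\text{rec}}^{U}$, and $\varepsilon<\overline{\varepsilon}_{1}^{U}$, while $\varepsilon<\overline{\varepsilon}_{2}^{U}$ guarantees that the Lambert-$W$ argument lies in $[-e^{-1},0)$ so that $\mathcal{T}_{\text{rec}}^{U}\ge1/\sqrt{m}$ is well defined.

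For the stochastic term, I would fix a grid of $N=\lceil2\|H_{2\sqrt{m}}\|\,\mathcal{T}\rceil$ points on the window $[\mathcal{T}_{\text{rec}}^{U},\mathcal{T}_{\text{esc}}^{U}]$ (of length $\mathcal{T}$), observe that for each $t$ the $Y$-block $M_{Y}(t)$ is centered Gaussian with covariance $\preceq\beta^{-1}H^{-1}$ (because $\Sigma_{\infty}-\Sigma_{t}\succeq0$), so $\mathbb{E}\|M_{Y}(t)\|^{2}\le d/(\beta m)$ and its covariance has spectral norm $\le1/(\beta m)$, apply a standard sub-Gaussian tail bound at each grid point, union-bound over the $N$ points, and finally control $\|M(t)-M(t_{j})\|$ between grid points (using $\|e^{-uH_{\gamma}}-I\|\le u\|H_{\gamma}\|e^{u\|H_{\gamma}\|}$ and $\mathcal{T}/N\le1/(2\|H_{\gamma}\|)$) to pass from the grid to the full supremum. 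The stated lower bound on $\beta$ is calibrated precisely so that $A:=\{\sup_{\mathcal{T}_{\text{rec}}^{U}\le t\le\mathcal{T}_{\text{esc}}^{U}}\|M_{Y}(t)\|\le\varepsilon/4\}$ has probability at least $1-\delta$, the $d\log2$ arising from a net on the sphere, the $\log((2\|H_{2\sqrt{m}}\|\mathcal{T}+1)/\delta)$ from the union bound, and the factor $2C_{H}m+4m+(m+1)^{2}$ absorbing the kernel constants. On $A$, if $\tau\in[\mathcal{T}_{\text{rec}}^{U},\mathcal{T}_{\text{esc}}^{U}]$ then at the stopping time $\|Y(\tau)\|\le\|Z_{\mathrm{tr}}(\tau)\|+\|Z_{\mathrm{nl}}(\tau)\|+\|M_{Y}(\tau)\|<\varepsilon+re^{-\sqrt{m}\tau}$, contradicting $\|Y(\tau)\|=\varepsilon+re^{-\sqrt{m}\tau}$ (valid by path-continuity when $\tau<\infty$; if $\tau=\infty$ the window is automatically avoided). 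Hence $\{\tau\in[\mathcal{T}_{\text{rec}}^{U},\mathcal{T}_{\text{esc}}^{U}]\}\subseteq A^{c}$ and $\mathbb{P}_{x}(\tau\in[\mathcal{T}_{\text{rec}}^{U},\mathcal{T}_{\text{esc}}^{U}])\le\mathbb{P}(A^{c})\le\delta$.

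The step I expect to be the main obstacle is closing the nonlinear estimate with the advertised constants: because $\|Z_{\mathrm{nl}}\|$ is controlled only through $\|Y\|^{2}$, i.e.\ through the very tube radius being tested, every inequality must be tight enough that the three contributions sum to something strictly below $\varepsilon+re^{-\sqrt{m}t}$, which is what pins down the precise forms of $\overline{\varepsilon}_{1}^{U},\overline{\varepsilon}_{2}^{U},\overline{\varepsilon}_{3}^{U}$ and the Lambert-$W$ definition of $\mathcal{T}_{\text{rec}}^{U}$. The secondary difficulty is the uniform-in-$t$ Gaussian concentration of $M_{Y}$ over a horizon that may be exponentially long, which must cost only logarithmically in $\mathcal{T}$ and dictates the shape of the lower bound on $\beta$.
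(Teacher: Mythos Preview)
Your Duhamel decomposition into transient, nonlinear, and stochastic pieces, together with your deterministic control of $Z_{\mathrm{tr}}$ and $Z_{\mathrm{nl}}$ via Lemma~\ref{lem:keylemma}(ii) and the Lambert-$W$ definition of $\mathcal{T}_{\text{rec}}^{U}$, tracks the paper's argument closely and correctly identifies how the thresholds $\overline{\varepsilon}_{1}^{U},\overline{\varepsilon}_{2}^{U},\overline{\varepsilon}_{3}^{U}$ enter.

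The substantive difference is in the supremum control of the Gaussian piece. The paper does \emph{not} work with $M(t)$ at grid points plus increments. On each short sub-interval $[t_{0},t_{1}]$ with $t_{1}-t_{0}\le 1/(2\|H_{\gamma}\|)$ it premultiplies by $Q_{t_{0}}(t_{1}):=e^{(t_{0}-t_{1})H_{\gamma}}$ and observes that $t\mapsto Q_{t_{0}}(t_{1})Z^{0}_{t}$ (where $Z^{0}_{t}$ combines your $Z_{\mathrm{tr}}$ and $M$ after pulling out $e^{(t-t_{0})H_{\gamma}}$) is an exact martingale; Doob's maximal inequality applied to $\exp\bigl(\tfrac{\beta\theta}{2}\|Q_{t_{0}}(t_{1})Z^{0}_{t}\|^{2}\bigr)$ then controls the supremum over the whole sub-interval in one stroke (Lemma~\ref{lem:ineq}), with the transient mean $\mu_{t_{1}}$ absorbed into the Gaussian MGF rather than split off separately. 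The constant $2C_{H}m+4m+(m+1)^{2}$ in the $\beta$ bound arises from the estimate $\|\Sigma_{t_{1}}\|\le\gamma\beta^{-1}(2C_{H}m+4m+(m+1)^{2})/(2m\sqrt{m})$ obtained by integrating $\|e^{-sH_{\gamma}}\|^{2}$, not from the stationary covariance $\beta^{-1}H^{-1}$ you invoked.

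Your grid-plus-increment route can in principle be completed, but the step ``control $\|M(t)-M(t_{j})\|$ using $\|e^{-uH_{\gamma}}-I\|\le u\|H_{\gamma}\|e^{u\|H_{\gamma}\|}$'' is a gap as written: that operator inequality only handles the piece $\bigl(e^{-(t-t_{j})H_{\gamma}}-I\bigr)M(t_{j})$, whereas the fresh Brownian contribution $\sqrt{2\gamma\beta^{-1}}\int_{t_{j}}^{t}e^{(s-t)H_{\gamma}}I^{(2)}\,dB^{(2)}_{s}$ is random and still needs a maximal inequality on each sub-interval. Supplying one essentially rebuilds the Doob step, so the paper's martingale transformation is the cleaner device here.
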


We are now ready to complete the proof of Theorem \ref{MainThm1}.

\subsubsection{Completing the proof of Theorem \ref{MainThm1}}\label{sec:complete:proof}

Assume that $\gamma=2\sqrt{m}$. 
Let us compare the discrete dynamics \eqref{eq:V-iterate}-\eqref{eq:X-iterate} 
and the continuous dynamics \eqref{eq:VL}-\eqref{eq:XL}.
Define: 
\begin{align}
&\tilde{V}(t)=V_0-\int_{0}^{t}\gamma\tilde{V}(s)ds
-\int_{0}^{t}\nabla F\left(\tilde{X}\left(\lfloor s/\eta\rfloor\eta\right)\right)ds+\sqrt{2\gamma \beta^{-1}}\int_{0}^{t}dB_{s},\label{eqn:tilde:1}
\\
&\tilde{X}(t)=X_0+\int_{0}^{t}\tilde{V}(s)ds.\label{eqn:tilde:2}
\end{align}
The process $(\tilde{V}, \tilde{X})$ defined in \eqref{eqn:tilde:1} and \eqref{eqn:tilde:2} 
is the continuous-time interpolation of the iterates $\{(V_{k},X_{k}) \}$. In particular, 
the joint distribution of $\{(V_{k},X_{k}): k = 1, 2, \ldots, K\}$ is the same as $\{(\tilde{V}(t),\tilde{X}(t)): t = \eta, 2 \eta, \ldots, K \eta\}$ for any positive integer $K.$ 

{We can follow the the proof of Lemma 18 in \cite{GGZ} 
to provide an upper bound on the relative entropy $D(\cdot\Vert\cdot)$
between 
the law $\tilde{\mathbb{P}}^{K\eta}$ of $((\tilde{V}(t),\tilde{X}(t)):t\leq K\eta)$
and the law $\mathbb{P}^{K\eta}$ of $((V(t),X(t)):t\leq K\eta)$
as follows.
For the sake of convenience of the readers, we provide
a self-contained derivation here.
Let $\mathbb{P}$ be the probability measure associated with the underdamped Langevin diffusion $(X(t),V(t))$ in \eqref{eq:VL}--\eqref{eq:XL}
and $\tilde{\mathbb{P}}$ be the probability measure associated with the $(\tilde{X}(t),\tilde{V}(t))$ process defined in \eqref{eqn:tilde:1}-\eqref{eqn:tilde:2}.
Let $\mathcal{F}_{t}$ be the natural filtration up to time $t$.
Then, the Radon-Nikodym derivative of $\mathbb{P}$ w.r.t. $\tilde{\mathbb{P}}$
is given by the Girsanov formula (see e.g. Section 7.6 in \cite{liptser2013statistics}):
\begin{equation*}
\frac{d\mathbb{P}}{d\tilde{\mathbb{P}}}
\bigg|_{\mathcal{F}_{t}}
=e^{-\sqrt{\frac{\beta}{2\gamma}}\int_{0}^{t}(\nabla F(\tilde{X}(s))-\nabla F(\tilde{X}(\lfloor s/\eta\rfloor\eta)))\cdot dB(s)
-\frac{\beta}{4\gamma}\int_{0}^{t}\Vert\nabla F(\tilde{X}(s))-\nabla F(\tilde{X}(\lfloor s/\eta\rfloor\eta))\Vert^{2}ds}.
\end{equation*}
Then by writing $\mathbb{P}_{t}$ and $\tilde{\mathbb{P}}_{t}$
as the probability measures $\mathbb{P}$ and $\tilde{\mathbb{P}}$ conditional on the filtration $\mathcal{F}_{t}$, 
\begin{align*}
D(\tilde{\mathbb{P}}_{t}\Vert\mathbb{P}_{t})
:=-\int d\tilde{\mathbb{P}}_{t}\log\frac{d\mathbb{P}_{t}}{d\tilde{\mathbb{P}}_{t}}
=\frac{\beta}{4\gamma}\int_{0}^{t}\mathbb{E}\left\Vert\nabla F(\tilde{X}(s))-\nabla F(\tilde{X}(\lfloor s/\eta\rfloor\eta))\right\Vert^{2}ds.
\end{align*}
Then, we get
\begin{align}
D(\tilde{\mathbb{P}}_{k\eta}\Vert\mathbb{P}_{k\eta})
=\frac{\beta}{4\gamma}\sum_{j=0}^{k-1}
\int_{j\eta}^{(j+1)\eta}\mathbb{E}\left\Vert\nabla F(\tilde{X}(s))-\nabla F(\tilde{X}(\lfloor s/\eta\rfloor\eta))\right\Vert^{2}ds. 
\end{align}
Notice that for any $k\eta\leq s<(k+1)\eta$,
\begin{equation}
\tilde{X}(s)=\tilde{X}_{k}+\psi_{1}(s-k\eta)\tilde{V}_{k}-\psi_{2}(s-k\eta)\nabla F(\tilde{X}_{k})
+\sqrt{2\gamma\beta^{-1}}\xi'_{k+1,s-k\eta},
\end{equation}
in distribution, where $\xi'_{k+1,s-k\eta}$
is centered Gaussian independent of $\mathcal{F}_{k}$ and 
$\mathbb{E}\Vert\xi'_{k+1,s-k\eta}\Vert^{2}\leq\frac{d}{3}(s-k\eta)^{3}\leq\frac{d}{3}\eta^{3}$.
Moreover, $\psi_{1}(s-k\eta)=\int_{0}^{s-k\eta}e^{-\gamma t}dt\leq(s-k\eta)\leq\eta$,
and $\psi_{2}(s-k\eta)=\int_{0}^{s-k\eta}\psi_{1}(t)dt\leq\int_{0}^{s-k\eta}tdt\leq\eta^{2}$.
Therefore, we can compute that
\begin{align*}
&\frac{\beta}{4\gamma}\sum_{j=0}^{k-1}
\int_{j\eta}^{(j+1)\eta}\mathbb{E}\left\Vert\nabla F(\tilde{X}(s))-\nabla F(\tilde{X}(\lfloor s/\eta\rfloor\eta))\right\Vert^{2}ds
\\
&
\leq
\frac{\beta M^{2}}{4\gamma}\sum_{j=0}^{k-1}
\int_{j\eta}^{(j+1)\eta}\mathbb{E}\left\Vert\tilde{X}(s)-\tilde{X}(\lfloor s/\eta\rfloor\eta)\right\Vert^{2}ds
\\
&
=\frac{\beta M^{2}}{4\gamma}\sum_{j=0}^{k-1}
\int_{j\eta}^{(j+1)\eta}\mathbb{E}\left\Vert\psi_{1}(s-j\eta)\tilde{V}_{j}-\psi_{2}(s-j\eta)\nabla F(\tilde{X}_{j})
+\sqrt{2\gamma\beta^{-1}}\xi'_{j+1,s-j\eta}\right\Vert^{2}ds
\\
&\leq
\frac{3\beta M^{2}}{4\gamma}\sum_{j=0}^{k-1}
\int_{j\eta}^{(j+1)\eta}\bigg(\mathbb{E}\left\Vert\psi_{1}(s-j\eta)\tilde{V}_{j}\right\Vert^{2}
+\mathbb{E}\left\Vert\psi_{2}(s-j\eta)\nabla F(\tilde{X}_{j})\right\Vert^{2}
\\
&\qquad\qquad\qquad\qquad\qquad\qquad
+\mathbb{E}\left\Vert\sqrt{2\gamma\beta^{-1}}\xi'_{j+1,s-j\eta}\right\Vert^{2}\bigg)ds
\\
&\leq
\frac{3\beta M^{2}}{4\gamma}(k\eta)
\bigg(\eta^{2}\sup_{j\geq 0}\mathbb{E}\Vert\tilde{V}_{j}\Vert^{2}
+\eta^{4}\left(2M^{2}\sup_{j\geq 0}\mathbb{E}\Vert\tilde{X}_{j}\Vert^{2}
+2B^{2}\right)
+\frac{d\eta^{3}}{3}2\gamma\beta^{-1}\bigg)
\\
&\leq
\frac{3\beta M^{2}}{4\gamma}(k\eta)\eta^{2}
\bigg(C_{v}^{d}+\left(2M^{2}C_{x}^{d}+2B^{2}\right)+\frac{2d\gamma\beta^{-1}}{3}\bigg),
\end{align*}
provided that $\eta\leq\min\left\{1,\frac{\gamma}{\hat{K}_{2}}(d/\beta+\overline{A}/\beta),\frac{\gamma\lambda}{2\hat{K}_{1}},\frac{2}{\gamma\lambda}\right\}$,
with $C_{v}^{d}$ defined in Lemma \ref{lem:L2bound}, 
where we applied Lemma~\ref{lem:gradient-bound}.
Hence, we have
\begin{equation*}
D\left(\tilde{\mathbb{P}}^{K\eta}\Big\Vert\mathbb{P}^{K\eta}\right)
\leq\frac{3\beta M^{2}}{4\gamma}K\eta^{3}
\left(C_{v}^{d}+2M^{2}C_{x}^{d}+2B^{2}+\frac{2d\gamma\beta^{-1}}{3}\right).
\end{equation*}
Using Pinsker's inequality, we obtain an upper bound on the total variation $\Vert\cdot\Vert_{TV}$: 
\begin{equation*}
\left\Vert\tilde{\mathbb{P}}^{K\eta}-\mathbb{P}^{K\eta}\right\Vert_{TV}^{2}
\leq\frac{3\beta M^{2}}{8\gamma}K\eta^{3}
\left(C_{v}^{d}+2M^{2}C_{x}^{d}+2B^{2}+\frac{2d\gamma\beta^{-1}}{3}\right).
\end{equation*}}
Using a result about an optimal coupling (Theorem 5.2., \cite{Lindvall}),
that is, given any two random elements $\mathcal{X}, \mathcal{Y}$ of a common standard Borel space,
there exists a coupling $\mathcal{P}$ of $\mathcal{X}$ and $\mathcal{Y}$ such that
\begin{equation*}
\mathcal{P}(\mathcal{X}\neq \mathcal{Y})\leq\Vert\mathcal{L}(\mathcal{X})-\mathcal{L}( \mathcal{Y})\Vert_{TV}.
\end{equation*}
Hence, given any $\beta>0$ and $K \eta\leq\mathcal{T}_{\text{esc}}^{U}$,
we can choose 
\begin{equation}\label{eq:chooseeta}
\eta^{2}\leq\frac{{8\gamma\delta^{2}}}{3\beta M^{2}(C_{v}^{d}+2M^{2}C_{x}^{d}+2B^{2}+\frac{2d\gamma\beta^{-1}}{3})\mathcal{T}_{\text{esc}}^{U}},
\end{equation}
so that there is a coupling of $\left\{(V(k\eta),X(k\eta)):k=1,2,\ldots,K\right\}$
and $\left\{(V_{k},X_{k}):k=1,2,\ldots,K\right\}$ such that 
\begin{equation}\label{eq:chooseeta2}
\mathcal{P}(((V(\eta),X(\eta)),\ldots,(V(K\eta),X(K\eta)))
\neq
((V_{1},X_{1}),\ldots,(V_{K},X_{K}))
\leq\delta.
\end{equation}
It follows that
\begin{equation*}
\mathbb{P}(((V_{1},X_{1}),\ldots,(V_{K},X_{K}))\in\cdot)
\leq\mathbb{P}(((V(\eta),X(\eta)),\ldots,(V(K\eta),X(K\eta)))\in\cdot)+\delta.
\end{equation*}

Let us now complete the proof of Theorem \ref{MainThm1}.
We need to show that
\begin{equation*}
\mathbb{P}\left(\left(X_{1},\ldots,X_{K}\right)\in\mathcal{A}\right)\leq\delta,
\end{equation*}
where $K=\lfloor\eta^{-1}\mathcal{T}_{\text{esc}}^{U}\rfloor$
and $\mathcal{A}:=\mathcal{A}_{1}\cap\mathcal{A}_{2}$, where 
\begin{align*}
&\mathcal{A}_{1}:=\left\{(x_{1},\ldots,x_{K})\in(\mathbb{R}^{d})^{K}:\max_{k\leq\eta^{-1}\mathcal{T}_{\text{rec}}^{U}}
\frac{\Vert x_{k}-x_{\ast}\Vert}{\varepsilon+re^{-\sqrt{m} k\eta}}\leq\frac{1}{2}\right\},
\\
&\mathcal{A}_{2}:=\left\{(x_{1},\ldots,x_{K})\in(\mathbb{R}^{d})^{K}:\max_{\eta^{-1}\mathcal{T}_{\text{rec}}^{U}\leq k\leq K}
\frac{\Vert x_{k}-x_{\ast}\Vert}{\varepsilon+re^{-\sqrt{m} k\eta}}\geq 1\right\}.
\end{align*}

We can choose $\beta$ sufficiently large so that with probability at least $1-\delta/3$,
we have either $\Vert X(t)-x_{\ast}\Vert\geq\varepsilon+re^{-\sqrt{m} t}$
for some $t\leq\mathcal{T}_{\text{rec}}^{U}$
or $\Vert X(t)-x_{\ast}\Vert\leq\varepsilon+re^{-\sqrt{m} t}$
for all $t\leq\mathcal{T}_{\text{esc}}^{U}$. 
Moreover, for any $K,\eta$ and $\beta$ satisfying the conditions
of the theorem, there exists a coupling of $(X(\eta),\ldots,X(K\eta))$
and $(X_{1},\ldots,X_{K})$ so that
with probability $1-\delta/3$, $X_{k}=X(k\eta)$ for all $k=1,2,\ldots,K$.
Then, by \eqref{eq:chooseeta} and \eqref{eq:chooseeta2}, we get
\begin{equation}\label{in:view:1}
\mathbb{P}((X_{1},\ldots,X_{K})\in\mathcal{A})
\leq\mathbb{P}((X(\eta),\ldots,X(K\eta))\in\mathcal{A})+\frac{\delta}{3},
\end{equation}
provided that
\begin{equation}\label{eqn:eta3}
\eta\leq\overline{\eta}_{3}^{U}:=\frac{{2\sqrt{2}}\gamma^{1/2}\delta}{3\sqrt{3\beta} M(C_{v}^{d}+2M^{2}C_{x}^{d}+2B^{2}+\frac{2d\gamma\beta^{-1}}{3})^{1/2}(\mathcal{T}_{\text{esc}}^{U})^{1/2}}.
\end{equation} 
It remains to estimate the probability of $\mathbb{P}((X(\eta),\ldots,X(K\eta))\in\mathcal{A}_{1}\cap\mathcal{A}_{2})$ 
for the underdamped Langevin diffusion.
Partition the interval $[0,\mathcal{T}_{\text{rec}}^{U}]$
using the points $0=t_{1}<t_{1}<\cdots<t_{\lceil\eta^{-1}\mathcal{T}_{\text{rec}}^{U}\rceil}=\mathcal{T}_{\text{rec}}^{U}$
with $t_{k}=k\eta$ for $k=0,1,\ldots,\lceil\eta^{-1}\mathcal{T}_{\text{rec}}^{U}\rceil-1$,
and consider the event: 
\begin{equation*}
\mathcal{B}:=\left\{\max_{0\leq k\leq \lceil\eta^{-1}\mathcal{T}_{\text{rec}}^{U}\rceil-1}\max_{t\in[t_{k},t_{k+1}]}\Vert X(t)-X(t_{k+1})\Vert\leq\frac{\varepsilon}{2}\right\}.
\end{equation*}
On the event $\left\{(X(\eta),\ldots,X(K\eta))\in\mathcal{A}_{1}\right\}\cap\mathcal{B}$,
\begin{align*}
\sup_{t\in[0,\mathcal{T}_{\text{rec}}^{U}]}
\frac{\Vert X(t)-x_{\ast}\Vert}{\varepsilon+re^{-\sqrt{m} t}}
&=\max_{0\leq k\leq \lceil\eta^{-1}\mathcal{T}_{\text{rec}}^{U}\rceil-1}\sup_{t\in[t_{k},t_{k+1}]}\frac{\Vert X(t)-x_{\ast}\Vert}{\varepsilon+re^{-\sqrt{m} t}}
\\
&\leq\frac{1}{2}+\max_{0\leq k\leq \lceil\eta^{-1}\mathcal{T}_{\text{rec}}^{U}\rceil-1}\max_{t\in[t_{k},t_{k+1}]}\frac{1}{\varepsilon}\Vert X(t)-X(t_{k+1})\Vert<1,
\end{align*}
and thus
\begin{align}
\mathbb{P}((X(\eta),\cdots,X(K\eta))\in\mathcal{A})
&\leq\mathbb{P}(\left\{(X(\eta),\cdots,X(K\eta))\in\mathcal{A}\right\}\cap\mathcal{B})
+\mathbb{P}(B^{c})
\nonumber
\\
&\leq
\mathbb{P}(\tau\in[\mathcal{T}_{\text{rec}}^{U},\mathcal{T}_{\text{esc}}^{U}])
+\mathbb{P}(\mathcal{B}^{c})
\nonumber
\\
&\leq
\frac{\delta}{3}+\mathbb{P}(\mathcal{B}^{c})\,,\label{in:view:2}
\end{align}
provided that (by applying Proposition \ref{prop:key} and Lemma \ref{lem:H:gamma}) (with $\gamma=2\sqrt{m}$): 
\begin{equation}\label{eqn:beta1}
\beta\geq\underline{\beta}_{1}^{U}:=
\frac{256(2C_{H}m+4m+(m+1)^{2})}{m\varepsilon^{2}}
\left(d\log(2)+\log\left(\frac{6\sqrt{4m+M^{2}+1}\mathcal{T}+3}{\delta}\right)\right).
\end{equation}

To complete the proof, we need to show that
$\mathbb{P}(\mathcal{B}^{c})\leq\frac{\delta}{3}$ in view of \eqref{in:view:1} and \eqref{in:view:2}.
For any $t\in[t_{k},t_{k+1}]$, where $t_{k+1} - t_k = \eta$, we have
\begin{equation}\label{addingtwo:1}
\Vert X(t)-X(t_{k+1})\Vert
\leq
\int_{t}^{t_{k+1}}\Vert V(s)\Vert ds
\leq\eta\Vert V(t_{k+1})\Vert
+\int_{t}^{t_{k+1}}\Vert V(s)-V(t_{k+1})\Vert ds,
\end{equation}
and
\begin{align}
&\Vert V(t)-V(t_{k+1})\Vert
\nonumber
\\
&\leq
\gamma\int_{t}^{t_{k+1}}\Vert V(s)\Vert ds
+\int_{t}^{t_{k+1}}\Vert\nabla F(X(s))\Vert ds
+\sqrt{2\gamma\beta^{-1}}\Vert B_{t}-B_{t_{k+1}}\Vert
\nonumber
\\
&\leq
\gamma\eta\Vert V(t_{k+1})\Vert
+\gamma\int_{t}^{t_{k+1}}\Vert V(s)-V(t_{k+1})\Vert ds
\nonumber
\\
&\qquad
+M\int_{t}^{t_{k+1}}\Vert X(s)-X(t_{k+1})\Vert ds
+\eta\Vert\nabla F(X(t_{k+1}))\Vert
+\sqrt{2\gamma\beta^{-1}}\Vert B_{t}-B_{t_{k+1}}\Vert
\nonumber
\\
&\leq
\gamma\eta\Vert V(t_{k+1})\Vert
+\gamma\int_{t}^{t_{k+1}}\Vert V(s)-V(t_{k+1})\Vert ds
\nonumber
\\
&\qquad
+M\int_{t}^{t_{k+1}}\Vert X(s)-X(t_{k+1})\Vert ds
+M\eta\Vert X(t_{k+1})\Vert+B\eta
+\sqrt{2\gamma\beta^{-1}}\Vert B_{t}-B_{t_{k+1}}\Vert\,,
\label{addingtwo:2}
\end{align}
where the second inequality above used $M$-Lipschitz property of $\nabla F$
and the last inequality above used Lemma \ref{lem:gradient-bound}.
By adding the above two inequalities \eqref{addingtwo:1} and \eqref{addingtwo:2} together, we get
\begin{align*}
&\Vert X(t)-X(t_{k+1})\Vert
+\Vert V(t)-V(t_{k+1})\Vert
\\
&\leq
(1+\gamma)\eta\Vert V(t_{k+1})\Vert
+(1+\gamma)\int_{t}^{t_{k+1}}\Vert V(s)-V(t_{k+1})\Vert ds
\\
&\qquad
+M\int_{t}^{t_{k+1}}\Vert X(s)-X(t_{k+1})\Vert ds
+M\eta\Vert X(t_{k+1})\Vert+B\eta
+\sqrt{2\gamma\beta^{-1}}\Vert B_{t}-B_{t_{k+1}}\Vert
\\
&\leq
(1+\gamma+M)\int_{t}^{t_{k+1}}\left(\Vert V(s)-V(t_{k+1})\Vert+\Vert X(s)-X(t_{k+1})\Vert\right)ds
\\
&\qquad
+(1+\gamma)\eta\Vert V(t_{k+1})\Vert+M\eta\Vert X(t_{k+1})\Vert
+B\eta
+\sqrt{2\gamma\beta^{-1}}\sup_{t\in[t_{k},t_{k+1}]}\Vert B_{t}-B_{t_{k+1}}\Vert.
\end{align*}
By applying Gronwall's inequality, we get
\begin{align}
&\sup_{t\in[t_{k},t_{k+1}]}\left[\Vert X(t)-X(t_{k+1})\Vert+\Vert V(t)-V(t_{k+1})\Vert\right]\nonumber
\\
&\leq
e^{(1+\gamma+M)\eta}
\left[(1+\gamma)\eta\Vert V(t_{k+1})\Vert+M\eta\Vert X(t_{k+1})\Vert+B\eta
+\sqrt{2\gamma\beta^{-1}}\sup_{t\in[t_{k},t_{k+1}]}\Vert B_{t}-B_{t_{k+1}}\Vert\right]. \label{eq:gronwall}
\end{align}
We have from Lemma~\ref{lem:L2bound} that for any $u>0$,
\begin{equation} 
\mathbb{P}(\Vert V(t_{k+1})\Vert\geq u)
\leq\frac{\sup_{t>0}\mathbb{E}\Vert V(t)\Vert^{2}}{u^{2}}
\leq\frac{C_{v}^{c}}{u^{2}}, \label{eq:L2-1}
\end{equation}
and
\begin{equation}
\mathbb{P}(\Vert X(t_{k+1})\Vert\geq u)
\leq\frac{\sup_{t>0}\mathbb{E}\Vert X(t)\Vert^{2}}{u^{2}}
\leq\frac{C_{x}^{c}}{u^{2}}, \label{eq:L2-2}
\end{equation}
where $C_{v}^{c}$, $C_{x}^{c}$ are defined in Lemma \ref{lem:L2bound}.
By Lemma \ref{lem:Brownian}, we have
\begin{equation*}
\mathbb{P}\left(\sup_{t\in[t_{k},t_{k+1}]}\Vert B_{t}-B_{t_{k+1}}\Vert\geq u\right)
\leq
2^{1/4}e^{1/4}e^{-\frac{u^{2}}{4d\eta}}.  
\end{equation*}
Therefore, we can infer from \eqref{eq:gronwall} that with $K_{0}:=\lceil\eta^{-1}\mathcal{T}_{\text{rec}}^{U}\rceil$,
\begin{align}
&\mathbb{P}\left(\mathcal{B}^{c}\right)
\nonumber
\\
&\leq
\sum_{k=0}^{K_{0}-1}
\mathbb{P}\left(\Vert X(t_{k+1})\Vert\geq\frac{\varepsilon e^{-(1+\gamma+M)\eta}}{8M\eta}\right)
+\sum_{k=0}^{K_{0}-1}
\mathbb{P}\left(\Vert V(t_{k+1})\Vert\geq\frac{\varepsilon e^{-(1+\gamma+M)\eta}}{8(1+\gamma)\eta}\right)
\nonumber
\\
&\qquad
+\sum_{k=0}^{K_{0}-1}
\mathbb{P}\left(B\geq\frac{\varepsilon e^{-(1+\gamma+M)\eta}}{8\eta}\right)
+\sum_{k=0}^{K_{0}-1}
\mathbb{P}\left(\sup_{t\in[t_{k},t_{k+1}]}\Vert B_{t}-B_{t_{k+1}}\Vert\geq\frac{\varepsilon e^{-(1+\gamma+M)\eta}\sqrt{\beta}}{8\sqrt{2\gamma}}\right)
\nonumber
\\
&\leq
\frac{64K_{0}}{\varepsilon^{2}}
\left(M^{2}C_{x}^{c}+(1+\gamma)^{2}C_{v}^{c}\right)\cdot\eta^{2}e^{2(1+\gamma+M)\eta}
\label{last:line:one}
\\
&\qquad\qquad\qquad
+2^{1/4}e^{1/4}K_{0}\cdot\exp\left(-\frac{1}{4d\eta}\frac{\varepsilon^{2}e^{-2(1+\gamma+M)\eta}\beta}{128\gamma}\right)
\label{last:line:two}
\\
&\qquad\qquad\qquad\qquad\qquad
+K_{0}\mathbb{P}\left(B\geq\frac{\varepsilon e^{-(1+\gamma+M)\eta}}{8\eta}\right)\,,
\label{last:line:three}
\end{align}
where the last inequality follows from \eqref{eq:L2-1}, \eqref{eq:L2-2} and Lemma \ref{lem:Brownian}. 
We can choose $\eta\leq 1$ so that
\begin{equation}\label{eqn:eta2}
\eta\leq
\overline{\eta}_{2}^{U}:=
\frac{\delta\varepsilon^{2}e^{-2(1+\gamma+M)}}{384(M^{2}C_{x}^{c}+(1+\gamma)^{2}C_{v}^{c})\mathcal{T}_{\text{rec}}^{U}}\,,
\end{equation}
so that the term in \eqref{last:line:one} is less than $\delta/6$,
where $C_{v}^{c}$, $C_{x}^{c}$ are defined in Lemma \ref{lem:L2bound},
and then we choose $\beta$ so that
\begin{equation}\label{eqn:beta2}
\beta\geq\underline{\beta}_{2}^{U}:=
\frac{512d\eta\gamma\log(2^{1/4}e^{1/4}6\delta^{-1}\mathcal{T}_{\text{rec}}^{U}/\eta)}
{\varepsilon^{2}e^{-2(1+\gamma+M)\eta}}\,,
\end{equation} 
so that the term in \eqref{last:line:two} is also less than $\delta/6$, and we can choose $\eta$ so that
$\eta\leq 1$ and
\begin{equation}\label{eqn:eta1}
\eta\leq\overline{\eta}_{1}^{U}
:=\frac{\varepsilon e^{-(1+\gamma+M)}}{8B}\,,
\end{equation}
so that the term in \eqref{last:line:three} is zero.

To complete the proof, let us work on the leading orders of the constants.
For the sake of convenience, we hide the dependence on $M$ and $L$
and assume that $M,L=\mathcal{O}(1)$.
We also assume that $C_{H}=\mathcal{O}(1)$.
Recall that $0<\varepsilon\leq\min\{\overline{\varepsilon}_{1}^{U},\overline{\varepsilon}_{2}^{U},\overline{\varepsilon}_{3}^{U}\}$, where it is easy to check that
It is easy to check that
\begin{equation*}
\overline{\varepsilon}_{1}^{U}=\sqrt{\frac{C_{H}+2+(m+1)^{2}}{(C_{H}+2)m+(m+1)^{2}}}r
\geq\Omega\left(\frac{C_{H}^{1/2}r}{C_{H}^{1/2}m^{1/2}+m+1}\right)
\geq\Omega(r),
\end{equation*}
where we used $m\leq M=\mathcal{O}(1)$ and
\begin{equation*}
\overline{\varepsilon}_{2}^{U}=2\sqrt{2}(C_{H}+2+(m+1)^{2})^{1/4}\frac{e^{-1/2}r}{m^{1/4}}
\geq\Omega\left(\frac{(1+C_{H}^{1/4})r}{m^{1/4}}\right)
\geq\Omega\left(\frac{r}{m^{1/4}}\right),
\end{equation*}
and
\begin{equation*}
\overline{\varepsilon}_{3}^{U}=\frac{\sqrt{m}}{4L\left(\sqrt{C_{H}+2}+\frac{m+1}{\sqrt{m}}
+\frac{\sqrt{(C_{H}+2)m}+(m+1)}{8\sqrt{C_{H}+2+(m+1)^{2}}}\right)}
\geq
\Omega\left(\frac{\sqrt{m}}{L\left(1+\frac{m+1}{\sqrt{m}}+\frac{\sqrt{m}}{m+1}\right)}\right)
\geq
\Omega(m),
\end{equation*}
where we used the fact that $m+1\geq 2\sqrt{m}$.
Hence, we can take
\begin{equation*}
\varepsilon\leq\min\left\{\mathcal{O}\left(r\right),
\mathcal{O}\left(\frac{r}{m^{1/4}}\right),\mathcal{O}(m)\right\}.
\end{equation*}
Moreover, $m\leq M=\mathcal{O}(1)$.
Hence, we can take
\begin{equation*}
\varepsilon\leq\min\left\{\mathcal{O}(r),\mathcal{O}(m)\right\}.
\end{equation*}

Next, we recall the recurrence time:
\begin{equation*}
\mathcal{T}_{\text{rec}}^{U}=-\frac{1}{\sqrt{m}}
W_{-1}\left(\frac{-\varepsilon^{2}\sqrt{m}}{8r^{2}\sqrt{C_{H}+2+(m+1)^{2}}}\right),
\end{equation*}
and since $W_{-1}(-x)\sim\log(1/x)$ for $x\rightarrow 0^{+}$, 
and we assume $C_{H}=\mathcal{O}(1)$, we get
\begin{equation*}
\mathcal{T}_{\text{rec}}^{U}=\mathcal{O}\left(\frac{1}{\sqrt{m}}\log\left(\frac{r}{\varepsilon m}\right)\right)
\leq\mathcal{O}\left(\frac{|\log(m)|}{\sqrt{m}}\log\left(\frac{r}{\varepsilon}\right)\right).
\end{equation*}

Next, we recall that stepsize $\eta$ satisfies 
$\eta\leq\min\{1,\overline{\eta}_{1}^{U},\overline{\eta}_{2}^{U},\overline{\eta}_{3}^{U},\overline{\eta}_{4}^{U}\}$
and it is easy to check that
\begin{equation*}
\overline{\eta}_{1}^{U}=\frac{\varepsilon e^{-(1+2\sqrt{m}+M)}}{8B}
\geq\Omega\left(\varepsilon e^{-(2m^{1/2}+M)}\right)
\geq\Omega(\varepsilon), 
\end{equation*}
and
\begin{equation*}
\overline{\eta}_{2}^{U}=\frac{\delta\varepsilon^{2}e^{-2(1+2\sqrt{m}+M)}}
{384(M^{2}C_{x}^{c}+(1+2\sqrt{m})^{2}C_{v}^{c})\mathcal{T}_{\text{rec}}^{U}}
\geq\Omega\left(\frac{\delta\varepsilon^{2}e^{-(4m^{1/2}+2M)}}
{(M^{2}C_{x}^{c}+(1+m)C_{v}^{c})\mathcal{T}_{\text{rec}}^{U}}\right).
\end{equation*}
Moreover, we have (note that $R=\sqrt{b/m}$ in the definition of $C_{x}^{c},C_{v}^{c}$)
\begin{equation*}
C_{x}^{c}
\leq\mathcal{O}\left(\frac{1+\frac{1}{m}+\frac{d}{\beta}}{m}\right),
\qquad
C_{v}^{c}
\leq\mathcal{O}\left(1+\frac{1}{m}+\frac{d}{\beta}\right),
\end{equation*}
together with $m\leq M=\mathcal{O}(1)$ implies that
\begin{equation*}
\overline{\eta}_{2}^{U}=\frac{\delta\varepsilon^{2}e^{-2(1+2\sqrt{m}+M)}}
{384(M^{2}C_{x}^{c}+(1+2\sqrt{m})^{2}C_{v}^{c})\mathcal{T}_{\text{rec}}^{U}}
\geq\Omega\left(\frac{m^{2}\beta\delta\varepsilon^{2}}
{(md+\beta)\mathcal{T}_{\text{rec}}^{U}}\right).
\end{equation*}
Moreover, 
\begin{equation*}
\overline{\eta}_{3}^{U}
=
\frac{{4m^{1/4}\delta}}{3\sqrt{3\beta} M(C_{v}^{d}+2M^{2}C_{x}^{d}+2B^{2}+\frac{4d\sqrt{m}\beta^{-1}}{3})^{1/2}(\mathcal{T}_{\text{esc}}^{U})^{1/2}}
\geq
\Omega\left(\frac{m^{5/4}\delta}{(d+\beta)^{1/2}(\mathcal{T}_{\text{esc}}^{U})^{1/2}}\right),
\end{equation*}
where we used $C_{x}^{d}\leq\mathcal{O}\left(\frac{d+\beta}{\beta m^{2}}\right)$
and $C_{v}^{d}\leq\mathcal{O}\left(\frac{d+\beta}{\beta m}\right)$,
and
\begin{equation*}
\overline{\eta}_{4}^{U}=\min\left\{1,\frac{2\sqrt{m}}{\hat{K}_{2}}\frac{d+\overline{A}}{\beta},
\frac{\sqrt{m}\lambda}{\hat{K}_{1}}\right\}
\geq\min\left\{\Omega\left(\frac{m^{1/2}(d+\beta)}{dm^{1/2}+\beta}\right),\Omega(m^{5/2})\right\}\,,
\end{equation*}
where we used $\lambda=\Omega(m)$, $\overline{A}=\Omega(\beta)$,
$K_{1}=\mathcal{O}(\frac{1}{\beta m})$, 
$K_{2}=\mathcal{O}(1)$, 
$\hat{K}_{1}=\mathcal{O}(\frac{1}{m})$,
$\hat{K}_{2}=\mathcal{O}(1+\frac{d}{\beta}\sqrt{m})$,
and the minimum between $\frac{m^{1/2}(d+\beta)}{dm^{1/2}+\beta}$ and $m^{5/2}$
is $m^{5/2}$. Hence, we can take
\begin{equation*}
\eta\leq\min\left\{\mathcal{O}(\varepsilon),
\mathcal{O}\left(\frac{m^{2}\beta\delta\varepsilon^{2}}
{(md+\beta)\mathcal{T}_{\text{rec}}^{U}}\right),
\mathcal{O}\left(\frac{m^{5/4}\delta}{(d+\beta)^{1/2}(\mathcal{T}_{\text{esc}}^{U})^{1/2}}\right),
\mathcal{O}(m^{5/2})\right\}\,.
\end{equation*}

Finally, $\beta$ satisfies $\beta\geq\max\{\underline{\beta}_{1}^{U},\underline{\beta}_{2}^{U}\}$,
and We have
\begin{align*}
\underline{\beta}_{1}^{U}&=\frac{256(2C_{H}m+4m+(m+1)^{2})}{m\varepsilon^{2}}\left(d\log(2)
+\log\left(\frac{6(4m + M^{2} + 1)^{1/2}\mathcal{T}+3}{\delta}\right)\right)
\\
&\leq
\mathcal{O}\left(\frac{d+\log((\mathcal{T}+1)/\delta)}{m\varepsilon^{2}}\right),
\end{align*}
and
\begin{equation*}
\underline{\beta}_{2}^{U}=\frac{1024d\eta\sqrt{m}\log(2^{1/4}e^{1/4}6\delta^{-1}\mathcal{T}_{\text{rec}}^{U}/\eta)}
{\varepsilon^{2}e^{-2(1+2\sqrt{m}+M)\eta}}
\leq
\mathcal{O}\left(\frac{d\eta m^{1/2}\log(\delta^{-1}\mathcal{T}_{\text{rec}}^{U}/\eta)}{\varepsilon^{2}}\right),
\end{equation*}
where we used $e^{2(1+2\sqrt{m}+M)\eta}=e^{\mathcal{O}(\varepsilon)}=\mathcal{O}(1)$.

Hence, we can take
\begin{equation*}
\beta\geq
\max\left\{\Omega\left(\frac{d+\log((\mathcal{T}+1)/\delta)}{m\varepsilon^{2}}\right),
\Omega\left(\frac{d\eta m^{1/2}\log(\delta^{-1}\mathcal{T}_{\text{rec}}^{U}/\eta)}{\varepsilon^{2}}\right)\right\}.
\end{equation*}

The proof is now complete.

\subsubsection{Proof of Proposition~\ref{prop:key}}\label{proof:two:results}

In this section, we focus on the proof of Proposition \ref{prop:key}.
We adopt some ideas from \cite{Berglund03,pmlr-v75-tzen18a}.
We recall $x_{\ast}$ is a local minimum of $F$ and $H$ is the Hessian matrix: 
$H=\nabla^{2}F(x_{\ast})$, and we write 
\begin{equation*}
X(t)=Y(t)+x_{\ast}.
\end{equation*}
Thus, we have the decomposition
\begin{equation*}
\nabla F(X(t))=HY(t)-\rho(Y(t)),
\end{equation*}
where $\Vert\rho(Y(t))\Vert\leq\frac{1}{2}L\Vert Y(t)\Vert^{2}$
since the Hessian of $F$ is $L$-Lipschitz (Lemma 1.2.4. \cite{Nesterov}).
Then, we have
\begin{align*}
&dV(t)=-\gamma V(t)dt- (H(Y(t))-\rho(Y(t)))dt+\sqrt{2\gamma \beta^{-1}}dB_{t},
\\
&dY(t)=V(t)dt.
\end{align*}
We can write it in terms of matrix form as:
\begin{equation*}
d
\left[
\begin{array}{c}
V(t)
\\
Y(t)
\end{array}
\right]
=
\left[
\begin{array}{cc}
-\gamma I & -H
\\
I & 0
\end{array}
\right]
\left[
\begin{array}{c}
V(t)
\\
Y(t)
\end{array}
\right]
dt
+
\sqrt{2\gamma\beta^{-1}}\left[
\begin{array}{cc}
I & 0
\\
0 & 0
\end{array}
\right]
dB_{t}^{(2)}
+
\left[
\begin{array}{c}
\rho(V(t))
\\
0
\end{array}
\right]dt,
\end{equation*}
where $B_{t}^{(2)}$ is a $2d$-dimensional standard Brownian motion. 
Therefore, we have
\begin{equation*}
\left[
\begin{array}{c}
V(t)
\\
Y(t)
\end{array}
\right]
=e^{-tH_{\gamma}}
\left[
\begin{array}{c}
V(0)
\\
Y(0)
\end{array}
\right]
+\sqrt{2\gamma\beta^{-1}}
\int_{0}^{t}e^{(s-t)H_{\gamma}}I^{(2)}dB_{s}^{(2)}
+\int_{0}^{t}e^{(s-t)H_{\gamma}}
\left[
\begin{array}{c}
\rho(V(s))
\\
0
\end{array}
\right]ds,
\end{equation*}
where
\begin{equation}\label{eqn:H:I}
H_{\gamma}=
\left[
\begin{array}{cc}
\gamma I & H
\\
-I & 0
\end{array}
\right],
\qquad
I^{(2)}=\left[
\begin{array}{cc}
I & 0
\\
0 & 0
\end{array}
\right].
\end{equation}

Given $0\leq t_{0}\leq t_{1}$, 
we define the matrix flow 
\begin{equation}\label{eqn:matrix:flow}
Q_{t_{0}}(t):=e^{(t_{0}-t)H_{\gamma}}
\end{equation}
and we also define
\begin{equation*}
Z(t):=e^{(t-t_{0})H_{\gamma}}\left[
\begin{array}{c}
V(t)
\\
Y(t)
\end{array}
\right]
=Z_{t}^{0}+Z_{t}^{1},
\end{equation*}
where
\begin{align}
&Z_{t}^{0}=e^{-t_{0}H_{\gamma}}
\left[
\begin{array}{c}
V(0)
\\
Y(0)
\end{array}
\right]
+\sqrt{2\gamma\beta^{-1}}
\int_{0}^{t}e^{(s-t_{0})H_{\gamma}}I^{(2)}dB_{s}^{(2)},\label{eqn:Z0}
\\
&Z_{t}^{1}=\int_{0}^{t}e^{(s-t_{0})H_{\gamma}}
\left[
\begin{array}{c}
\rho(V(s))
\\
0
\end{array}
\right]ds.\label{eqn:Z1}
\end{align}
Note that 
\begin{equation*}
Q_{t_{0}}(t_{1})Z_{t}^{0}
=e^{-t_{1}H_{\gamma}}
\left[
\begin{array}{c}
V(0)
\\
Y(0)
\end{array}
\right]
+\sqrt{2\gamma\beta^{-1}}
\int_{0}^{t}e^{(s-t_{1})H_{\gamma}}I^{(2)}dB_{s}^{(2)}
\end{equation*}
is a martingale. 
Before we proceed to the proof of Proposition \ref{prop:key}, 
we state the following lemma, which will be used in the proof of Proposition \ref{prop:key}.

\begin{lemma}\label{lem:ineq}
Assume $\gamma=2\sqrt{m}$. Define:
\begin{align}
&\mu_{t}:=e^{-tH_{\gamma}}(V(0),Y(0))^{T},\label{eqn:mu:t}
\\
&\Sigma_{t}:= 2\gamma\beta^{-1}\int_{0}^{t}e^{(s-t)H_{\gamma}}I^{(2)}e^{(s-t)H_{\gamma}^{T}}ds.\label{eqn:Sigma:t}
\end{align}
For any $\theta \in \left(0,\frac{2m\sqrt{m}}{\gamma(2C_{H}m+4m+(m+1)^{2})}\right)$,
and $h>0$ and any $(V(0),Y(0))$,
\begin{align*}
&\mathbb{P}\left(\sup_{t_{0}\leq t\leq t_{1}}\Vert Q_{t_{0}}(t_{1})Z_{t}^{0}\Vert\geq h\right)
\\
&\leq \left(1-\theta\frac{\gamma(2C_{H}m+4m+(m+1)^{2})}{2m\sqrt{m}}\right)^{-d} 
e^{-\frac{\beta\theta}{2}[h^{2}-\langle\mu_{t_{1}},(I-\beta\theta\Sigma_{t_{1}})^{-1}\mu_{t_{1}}\rangle]}.
\end{align*}
\end{lemma}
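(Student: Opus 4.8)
The plan is to recognize $N_t := Q_{t_0}(t_1) Z_t^0$ as a continuous $\mathbb{R}^{2d}$-valued martingale and to apply a Doob-type maximal inequality to the nonnegative submartingale $e^{\frac{\beta\theta}{2}\|N_t\|^2}$, which reduces everything to a Gaussian moment generating function computation together with a spectral bound on $\Sigma_{t_1}$. First I would compute, using \eqref{eqn:matrix:flow}, \eqref{eqn:Z0} and the fact that $e^{(t_0-t_1)H_\gamma}$ commutes with $e^{(s-t_0)H_\gamma}$ and may be pulled inside the stochastic integral,
\[
N_t := Q_{t_0}(t_1)Z_t^0 = e^{-t_1 H_\gamma}[V(0),Y(0)]^T + \sqrt{2\gamma\beta^{-1}}\int_0^t e^{(s-t_1)H_\gamma}I^{(2)}\,dB_s^{(2)} = \mu_{t_1} + M_t,
\]
where $\mu_{t_1}$ is the deterministic vector in \eqref{eqn:mu:t} and $M_t$ is a continuous $L^2$-martingale on $[0,t_1]$ with $M_0=0$ (its integrand is deterministic and bounded on $[0,t_1]$). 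Since this integrand is deterministic, $N_{t_1}$ is Gaussian with mean $\mu_{t_1}$ and covariance $\mathrm{Cov}(M_{t_1}) = 2\gamma\beta^{-1}\int_0^{t_1} e^{(s-t_1)H_\gamma}I^{(2)}e^{(s-t_1)H_\gamma^T}\,ds = \Sigma_{t_1}$ (using $I^{(2)}(I^{(2)})^T = I^{(2)}$), and more generally $N_t\sim\mathcal{N}(\mu_{t_1},\mathrm{Cov}(M_t))$. The substitution $u=t_1-s$ gives $\mathrm{Cov}(M_t) = 2\gamma\beta^{-1}\int_{t_1-t}^{t_1} e^{-uH_\gamma}I^{(2)}e^{-uH_\gamma^T}\,du \preceq 2\gamma\beta^{-1}\int_0^{t_1}e^{-uH_\gamma}I^{(2)}e^{-uH_\gamma^T}\,du = \Sigma_{t_1}$ for every $t\le t_1$, so $\|\mathrm{Cov}(M_t)\|\le\|\Sigma_{t_1}\|$ throughout.

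Next I would bound $\|\Sigma_{t_1}\|$ using Lemma~\ref{lem:keylemma}(ii) (recall $\gamma=2\sqrt m$ here): since $\|I^{(2)}\|=1$ and $\|e^{-uH_\gamma}\|^2\le (C_H+2+(m+1)^2u^2)e^{-2\sqrt m u}$,
\[
\|\Sigma_{t_1}\|\le 2\gamma\beta^{-1}\int_0^\infty\|e^{-uH_\gamma}\|^2\,du \le 2\gamma\beta^{-1}\left(\frac{C_H+2}{2\sqrt m}+\frac{(m+1)^2}{4m^{3/2}}\right) = \frac{\gamma\,(2C_H m+4m+(m+1)^2)}{2\beta m\sqrt m},
\]
using $\int_0^\infty e^{-2\sqrt m u}du = \tfrac1{2\sqrt m}$ and $\int_0^\infty u^2 e^{-2\sqrt m u}du = \tfrac1{4m^{3/2}}$. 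Hence $\beta\theta\|\Sigma_{t_1}\|\le \theta\,\tfrac{\gamma(2C_H m+4m+(m+1)^2)}{2m\sqrt m} < 1$ by the hypothesis on $\theta$, so $I-\beta\theta\Sigma_{t_1}\succ 0$ and, comparing eigenvalues, $\det(I-\beta\theta\Sigma_{t_1})\ge(1-\beta\theta\|\Sigma_{t_1}\|)^{2d}\ge\bigl(1-\theta\tfrac{\gamma(2C_H m+4m+(m+1)^2)}{2m\sqrt m}\bigr)^{2d}$. Together with $\mathrm{Cov}(M_t)\preceq\Sigma_{t_1}$ this also shows $I-\beta\theta\,\mathrm{Cov}(M_t)\succ 0$ for every $t\le t_1$, hence, by the Gaussian identity
\[
\mathbb{E}\bigl[e^{\lambda\|G\|^2}\bigr] = \bigl(\det(I-2\lambda\Sigma)\bigr)^{-1/2}\exp\!\bigl(\lambda\,\langle\mu,(I-2\lambda\Sigma)^{-1}\mu\rangle\bigr),\qquad G\sim\mathcal{N}(\mu,\Sigma),\ I-2\lambda\Sigma\succ 0,
\]
(obtained by completing the square in the Gaussian integral, applied with $\lambda=\beta\theta/2$), the quantity $\mathbb{E}\bigl[e^{\frac{\beta\theta}{2}\|N_t\|^2}\bigr]$ is finite and uniformly bounded over $t\le t_1$.

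Finally, since $x\mapsto e^{\frac{\beta\theta}{2}\|x\|^2}$ is convex, $Y_t:=e^{\frac{\beta\theta}{2}\|N_t\|^2}$ is integrable for every $t$ (by the previous step) and, by Jensen's inequality applied to the martingale $N_t$, a nonnegative submartingale on $[t_0,t_1]$; Doob's maximal inequality then gives
\[
\mathbb{P}\!\left(\sup_{t_0\le t\le t_1}\|Q_{t_0}(t_1)Z_t^0\|\ge h\right) = \mathbb{P}\!\left(\sup_{t_0\le t\le t_1}Y_t\ge e^{\frac{\beta\theta}{2}h^2}\right)\le e^{-\frac{\beta\theta}{2}h^2}\,\mathbb{E}[Y_{t_1}] = e^{-\frac{\beta\theta}{2}h^2}\,\mathbb{E}\bigl[e^{\frac{\beta\theta}{2}\|N_{t_1}\|^2}\bigr],
\]
and substituting the Gaussian identity with $(\mu,\Sigma)=(\mu_{t_1},\Sigma_{t_1})$ together with the determinant bound yields precisely the claimed estimate. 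I expect the main point requiring care to be the submartingale step — namely the uniform exponential integrability $\sup_{t\le t_1}\mathbb{E}[e^{\frac{\beta\theta}{2}\|N_t\|^2}]<\infty$, which rests entirely on the covariance monotonicity $\mathrm{Cov}(M_t)\preceq\Sigma_{t_1}$ derived above; the remaining ingredients (the Gaussian moment generating function computation and the $L^1$-in-time bound on $\|e^{-uH_\gamma}\|^2$ supplied by Lemma~\ref{lem:keylemma}(ii)) are routine.
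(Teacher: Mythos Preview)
Your proposal is correct and follows essentially the same approach as the paper: apply Doob's maximal inequality to the martingale $Q_{t_0}(t_1)Z_t^0$ via the submartingale $e^{\frac{\beta\theta}{2}\|Q_{t_0}(t_1)Z_t^0\|^2}$, evaluate the resulting Gaussian moment generating function at time $t_1$, and bound $\|\Sigma_{t_1}\|$ through Lemma~\ref{lem:keylemma}(ii) to control the determinant. Your write-up is in fact more careful than the paper's, which simply invokes ``Doob's martingale inequality'' without spelling out the exponential integrability; your covariance monotonicity argument $\mathrm{Cov}(M_t)\preceq\Sigma_{t_1}$ fills exactly that gap.
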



Finally, let us complete the proof of Proposition \ref{prop:key}.

\begin{proof}[Proof of Proposition~\ref{prop:key}]
Since $\Vert Y(0)\Vert=\Vert X(0)-x_{\ast}\Vert\leq r$, we know that $\tau>0$. 
Fix some $\mathcal{T}_{\text{rec}}^{U}\leq t_{0}\leq t_{1}$,
such that $t_{1}-t_{0}\leq\frac{1}{2\Vert H_{\gamma}\Vert}$.
Then, for every $t\in[t_{0},t_{1}]$,
\begin{equation*}
\Vert Y(t)\Vert
\leq\left\Vert e^{(t_{1}-t)H_{\gamma}}Q_{t_{0}}(t_{1})Z_{t}\right\Vert
\leq e^{\frac{1}{2}}\left\Vert Q_{t_{0}}(t_{1})Z_{t}\right\Vert.
\end{equation*}

It follows that (with $e^{-1/2}\geq 1/2$)
\begin{align}
&\mathbb{P}(\tau\in[t_{0},t_{1}])
\nonumber
\\
&=\mathbb{P}\left(\sup_{t_{0}\leq t\leq t_{1}\wedge\tau}\frac{\Vert Y(t)\Vert}{\varepsilon+re^{-\sqrt{m} t}}\geq 1,\tau\geq t_{0}\right)
\nonumber
\\
&\leq\mathbb{P}\left(\sup_{t_{0}\leq t\leq t_{1}\wedge\tau}\frac{\Vert Q_{t_{0}}(t_{1})Z_{t}\Vert}
{\varepsilon+re^{-\sqrt{m} t}}\geq\frac{1}{2},\tau\geq t_{0}\right)
\nonumber
\\
&\leq
\mathbb{P}\left(\sup_{t_{0}\leq t\leq t_{1}\wedge\tau}\frac{\Vert Q_{t_{0}}(t_{1})Z_{t}^{0}\Vert}
{\varepsilon+re^{-\sqrt{m} t}}\geq c_{0},\tau\geq t_{0}\right)
+\mathbb{P}\left(\sup_{t_{0}\leq t\leq t_{1}\wedge\tau}\frac{\Vert Q_{t_{0}}(t_{1})Z_{t}^{1}\Vert}
{\varepsilon+re^{-\sqrt{m} t}}\geq c_{1},\tau\geq t_{0}\right),
\label{bound:two:terms}
\end{align}
where $c_{0}+c_{1}=\frac{1}{2}$ and $c_{0},c_{1}>0$.
We will first bound the second term in \eqref{bound:two:terms} which will turn out
to be zero, and then use Lemma \ref{lem:ineq} to bound the first term in \eqref{bound:two:terms}.

First, notice that $Z_{t}^{1}\equiv 0$ in the quadratic case 
and the second term in \eqref{bound:two:terms} is automatically zero.
In the more general case, we will show that the second term in \eqref{bound:two:terms}
is also zero.
On the event $\tau\in[t_{0},t_{1}]$, for any $0\leq s\leq t_{1}\wedge\tau$,
we have
\begin{equation*}
\Vert\rho(Y(s))\Vert\leq\frac{L}{2}\Vert Y(s)\Vert^{2}
\leq\frac{L}{2}\left(\varepsilon+re^{-\sqrt{m}s}\right)^{2}.
\end{equation*}
Therefore, for any $t\in[t_{0},t_{1}\wedge\tau]$, by Lemma \ref{lem:keylemma}, we get
\begin{align*}
&\left\Vert Q_{t_{0}}(t_{1})Z_{t}^{1}\right\Vert
\\
&\leq\int_{0}^{t}\left\Vert e^{ (s-t_{1}) H_{\gamma}}\right\Vert\cdot\Vert\rho(Y(s))\Vert ds
\\
&\leq\frac{L}{2}\int_{0}^{t}\sqrt{C_{H}+2+(m+1)^{2}(t_{1}-s)^{2}}e^{ (s-t_{1})\sqrt{m}}
\left(\varepsilon+re^{-\sqrt{m} s}\right)^{2}ds
\\
&\leq L\int_{0}^{t}\left(\sqrt{C_{H}+2}+(m+1)(t_{1}-s)\right)e^{(s-t_{1})\sqrt{m}}\left(\varepsilon^{2}+r^{2}e^{-2\sqrt{m}s}\right)ds
\\
&\leq L\int_{0}^{t_{1}}\left(\sqrt{C_{H}+2}+(m+1)(t_{1}-s)\right)e^{(s-t_{1})\sqrt{m}}\left(\varepsilon^{2}+r^{2}e^{-2\sqrt{m}s}\right)ds
\\
&\leq\frac{L}{\sqrt{m}}\left(\left(\sqrt{C_{H}+2}+\frac{m+1}{\sqrt{m}}\right)\varepsilon^{2}
+\sqrt{C_{H}+2}r^{2}e^{-\sqrt{m} t_{1}}\right)
\\
&\qquad\qquad\qquad\qquad
+L(m+1)r^{2}\int_{0}^{t_{1}}(t_{1}-s)e^{(s-t_{1})\sqrt{m}}e^{-2\sqrt{m}s}ds
\\
&\leq\frac{L}{\sqrt{m}}\left(\left(\sqrt{C_{H}+2}+\frac{m+1}{\sqrt{m}}\right)\varepsilon^{2}
+\sqrt{C_{H}+2}r^{2}e^{-\sqrt{m} t_{1}}+(m+1)r^{2}t_{1}e^{-t_{1}\sqrt{m}}\right)
\\
&\leq\frac{L}{\sqrt{m}}\left(\left(\sqrt{C_{H}+2}+\frac{m+1}{\sqrt{m}}\right)\varepsilon^{2}
+\left(\sqrt{(C_{H}+2)m}+(m+1)\right)r^{2}t_{1}e^{-t_{1}\sqrt{m}}\right)
\\
&\leq\frac{L}{\sqrt{m}}\left(\sqrt{C_{H}+2}+\frac{m+1}{\sqrt{m}}
+\frac{\sqrt{(C_{H}+2)m}+(m+1)}{8\sqrt{C_{H}+2+(m+1)^{2}}}\right)\varepsilon^{2}
\end{align*}
where we used $t_{1}\geq t\geq t_{0}\geq\mathcal{T}_{\text{rec}}^{U}\geq\frac{1}{\sqrt{m}}$, 
and $t_{1}e^{-t_{1}\sqrt{m}}\leq\mathcal{T}_{\text{rec}}^{U}e^{-\mathcal{T}_{\text{rec}}^{U}\sqrt{m}}$
and the definition of $\mathcal{T}_{\text{rec}}^{U}$:
\begin{equation*}
\sqrt{C_{H}+2+(m+1)^{2}}\mathcal{T}_{\text{rec}}^{U}e^{-\sqrt{m}\mathcal{T}_{\text{rec}}^{U}}
=\frac{\varepsilon^{2}}{8r^2}.
\end{equation*}
Consequently, if we take $c_{1}=\frac{L}{\sqrt{m}}\left(\sqrt{C_{H}+2}+\frac{m+1}{\sqrt{m}}
+\frac{\sqrt{(C_{H}+2)m}+(m+1)}{8\sqrt{C_{H}+2+(m+1)^{2}}}\right)\varepsilon$, 
then, 
\begin{equation*}
\sup_{t_{0}\leq t\leq t_{1}\wedge\tau}\frac{\Vert Q_{t_{0}}(t_{1})Z_{t}\Vert}{\varepsilon+re^{-\sqrt{m} t}}
\leq\frac{1}{\varepsilon}\sup_{t_{0}\leq t\leq t_{1}\wedge\tau}\Vert Q_{t_{0}}(t_{1})Z_{t}\Vert\leq c_{1},
\end{equation*}
which implies that
\begin{equation*}
\mathbb{P}\left(\sup_{t_{0}\leq t\leq t_{1}\wedge\tau}\frac{\Vert Q_{t_{0}}(t_{1})Z_{t}^{1}\Vert}
{\varepsilon+re^{-\sqrt{m} t}}\geq c_{1},\tau\geq t_{0}\right)=0.
\end{equation*}
Moreover, $c_{0}=\frac{1}{2}-c_{1}=\frac{1}{2}-\frac{L}{\sqrt{m}}\left(\sqrt{C_{H}+2}+\frac{m+1}{\sqrt{m}}
+\frac{\sqrt{(C_{H}+2)m}+(m+1)}{8\sqrt{C_{H}+2+(m+1)^{2}}}\right)\varepsilon>\frac{1}{4}$ since it is assumed 
that $\varepsilon <\frac{\sqrt{m}}{4L\left(\sqrt{C_{H}+2}+\frac{m+1}{\sqrt{m}}
+\frac{\sqrt{(C_{H}+2)m}+(m+1)}{8\sqrt{C_{H}+2+(m+1)^{2}}}\right)}$. 

Second, we will apply Lemma \ref{lem:ineq} to bound the first term in \eqref{bound:two:terms}.
By using $V(0)=0$ and $\Vert Y(0)\Vert\leq r$ and the definition
of $\mu_{t_{1}}$ and $\Sigma_{t_{1}}$ in \eqref{eqn:mu:t} and \eqref{eqn:Sigma:t},
we get
\begin{align*}
&\left\langle\mu_{t_{1}},(I-\beta\theta\Sigma_{t_{1}})^{-1}\mu_{t_{1}}\right\rangle
\\
&=
\left\langle e^{-t_{1}H_{\gamma}}(V(0),Y(0))^{T},(I-\beta\theta\Sigma_{t_{1}})^{-1}e^{-t_{1}H_{\gamma}}(V(0),Y(0))^{T}\right\rangle
\\
&\leq\left(1-\theta\frac{\gamma(2C_{H}m+4m+(m+1)^{2})}{2m\sqrt{m}}\right)^{-1}
\left(C_{H}+2+(m+1)^{2}t_{1}^{2}\right)e^{-2\sqrt{m}t_{1}}r^{2}
\\
&\leq
2\left((C_{H}+2)m+(m+1)^{2}\right)t_{1}^{2}e^{-2\sqrt{m}t_{1}}r^{2}
\\
&\leq\frac{1}{32}\frac{(C_{H}+2)m+(m+1)^{2}}{C_{H}+2+(m+1)^{2}}\frac{\varepsilon^{4}}{r^{2}}
\leq\frac{1}{32}\varepsilon^{2},
\end{align*}
by choosing $\theta=\frac{m\sqrt{m}}{\gamma(2C_{H}m+4m+(m+1)^{2})}$ and $t_{1}\geq\mathcal{T}_{\text{rec}}^{U}\geq\frac{1}{\sqrt{m}}$,
and $t_{1}e^{-t_{1}\sqrt{m}}\leq\mathcal{T}_{\text{rec}}^{U}e^{-\mathcal{T}_{\text{rec}}^{U}}$,
and using the definition
$\sqrt{C_{H}+2+(m+1)^{2}}\mathcal{T}_{\text{rec}}^{U}e^{-\sqrt{m}\mathcal{T}_{\text{rec}}^{U}}
=\frac{\varepsilon^{2}}{8r^{2}}$, and we also used 
$\varepsilon\leq\sqrt{\frac{C_{H}+2+(m+1)^{2}}{(C_{H}+2)m+(m+1)^{2}}}r$.

Then with the choice of $h=(\varepsilon+re^{-\sqrt{m}t_{1}})c_{0}$
and $\theta=\frac{m\sqrt{m}}{\gamma(2C_{H}m+4m+(m+1)^{2})}$ in Lemma \ref{lem:ineq}, and
using the fact that $h=(\varepsilon+re^{-\sqrt{m}t_{1}})c_{0}\geq\varepsilon c_{0}$, we get 
\begin{align*}
&\mathbb{P}\left(\sup_{t_{0}\leq t\leq t_{1}\wedge\tau}\frac{\Vert Q_{t_{0}}(t_{1})Z_{t}^{0}\Vert}
{\varepsilon+re^{-\sqrt{m} t}}\geq c_{0},\tau\geq t_{0}\right)
\\
&\leq
\mathbb{P}\left(\sup_{t_{0}\leq t\leq t_{1}}\Vert Q_{t_{0}}(t_{1})Z_{t}^{0}\Vert
\geq\left(\varepsilon+re^{-\sqrt{m} t_{1}}\right)c_{0}\right)
\\
&\leq\left(1-\theta\frac{\gamma(2C_{H}m+4m+(m+1)^{2})}{2m\sqrt{m}}\right)^{-\frac{2d}{2}} 
\cdot\exp\left(-\frac{\beta\theta}{2}\left[h^{2}-\langle\mu_{t_{1}},(I-\beta\theta\Sigma_{t_{1}})^{-1}\mu_{t_{1}}\rangle\right]\right)
\\
&\leq
2^{d}\cdot\exp\left(-\frac{\beta\gamma^{-1}m\sqrt{m}\varepsilon^{2}}{2(2C_{H}+4m+(m+1)^{2})}
\left(c_{0}^{2}-\frac{1}{32}\right)\right)
\\
&\leq
2^{d}\cdot\exp\left(-\frac{\beta\gamma^{-1}m\sqrt{m}\varepsilon^{2}}{128(2C_{H}+4m+(m+1)^{2})}\right).
\end{align*}
Thus for any $t_{0}\geq\mathcal{T}_{\text{rec}}^{U}$ and $t_{0}\leq t_{1}\leq t_{0}+\frac{1}{2\Vert H_{\gamma}\Vert}$, 
\begin{equation*}
\mathbb{P}(\tau\in[t_{0},t_{1}])
\leq
2^{d}\cdot\exp\left(-\frac{\beta\gamma^{-1}m\sqrt{m}\varepsilon^{2}}{128(2C_{H}m+4m+(m+1)^{2})}
\right).
\end{equation*}
Fix any $\mathcal{T}>0$ and recall the definition of the escape time $\mathcal{T}_{\text{esc}}^{U}=\mathcal{T}+\mathcal{T}_{\text{rec}}^{U}$.
Partition the interval $[\mathcal{T}_{\text{rec}}^{U},\mathcal{T}_{\text{esc}}^{U}]$ using the points
$\mathcal{T}_{\text{rec}}^{U}=t_{0}<t_{1}<\cdots<t_{\lceil 2\Vert H_{\gamma}\Vert \mathcal{T}\rceil}=\mathcal{T}_{\text{esc}}^{U}$
with $t_{j}=j/(2\Vert H_{\gamma}\Vert)$, then we have
\begin{align*}
\mathbb{P}\left(\tau\in\left[\mathcal{T}_{\text{rec}}^{U},\mathcal{T}_{\text{esc}}^{U}\right]\right)
&=\sum_{j=0}^{\lceil 2\Vert H_{\gamma}\Vert \mathcal{T}\rceil}\mathbb{P}(\tau\in[t_{j},t_{j+1}])
\\
&\leq(2\Vert H_{\gamma}\Vert \mathcal{T}+1)\cdot 2^{d}
\cdot\exp\left(-\frac{\beta\gamma^{-1}m\sqrt{m}\varepsilon^{2}}{128(2C_{H}m+4m+(m+1)^{2})}\right)\leq\delta,
\end{align*}
provided that
\begin{equation*}
\beta\geq\frac{128(2C_{H}m+4m+(m+1)^{2})\gamma }{m\sqrt{m}\varepsilon^{2}}\left(d\log(2)+\log\left(\frac{2\Vert H_{\gamma}\Vert \mathcal{T}+1}{\delta}\right)\right).
\end{equation*}
Finally, plugging $\gamma=2\sqrt{m}$ into the above formulas and applying 
the bound on $\Vert H_{\gamma}$ from Lemma \ref{lem:H:gamma}, 
the conclusion follows.
\end{proof}

\subsubsection{Uniform $L^2$ bounds for underdamped Langevin dynamics}

In this section, we state the uniform $L^{2}$ bounds for the continuous time
underdamped Langevin dynamics (\eqref{eq:VL} and \eqref{eq:XL}) 
and the discrete time iterates (\eqref{eq:V-iterate} and \eqref{eq:X-iterate}) 
in Lemma \ref{lem:L2bound}, which is a modification of Lemma 8 in \cite{GGZ}.
The uniform $L^{2}$ bound for the discrete dynamics \eqref{eq:V-iterate}-\eqref{eq:X-iterate} 
is used to derive the relative entropy
to compare the laws of the continuous time dynamics and the discrete time dynamics,
and the uniform $L^{2}$ bound for the continuous dynamics \eqref{eq:VL}-\eqref{eq:XL} 
is used to control
the tail of the continuous dynamics in Section \ref{sec:complete:proof}.

Before we proceed, let us first introduce
the following Lyapunov function (from the paper \cite{Eberle}) 
which will be used 
in the proof the uniform $L^{2}$ boundedness results
for both the continuous and discrete underdamped Langevin dynamics.
We define the Lyapunov function $\mathcal{V}$ as:
\begin{equation} \label{eq:lyapunov}
\mathcal{V}(x,v):=\beta F(x)
+\frac{\beta}{4}\gamma^{2}\left(\Vert x+\gamma^{-1}v\Vert^{2}+\Vert\gamma^{-1}v\Vert^{2}-\lambda\Vert x\Vert^{2}\right)\,,
\end{equation}
and $\lambda$ is a positive constant less than $1/4$ according to \cite{Eberle}.
We will first show in the following lemma
that we can find explicit constants $\lambda\in(0,\min(1/4,m/(M+\gamma^{2}/2)))$ and $\overline{A}\in(0,\infty)$ so that
the drift condition \eqref{eq:drift}
is satisfied. The drift condition
is needed in \cite{Eberle}, 
which is applied to obtain the uniform 
$L^{2}$ bounds in \cite{GGZ} 
that implies
the uniform $L^{2}$ bounds
in our current setting (the following Lemma \ref{lem:L2bound}).

\begin{lemma}\label{lem:lambda:A}
Let us define: 
\begin{align}
&\lambda=\frac{1}{2}\min(1/4,m/(M+\gamma^{2}/2)),\label{eqn:lambda}
\\
&\overline{A}=\frac{\beta}{2}\frac{m}{M+\frac{1}{2}\gamma^{2}}
\left(\frac{B^{2}}{2M+\gamma^{2}}
+\frac{b}{m}\left(M+\frac{1}{2}\gamma^{2}\right)+A\right),
\label{eqn:overline:A}
\end{align}
then the following drift condition holds:
\begin{align}
x\cdot\nabla F(x)
\geq 2\lambda( F(x)+ \gamma^{2}\Vert x\Vert^{2}/4)-2\overline{A}/\beta\,.
\label{eq:drift}
\end{align}
\end{lemma}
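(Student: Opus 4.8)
The plan is to derive the drift condition \eqref{eq:drift} purely from the three structural facts in Assumption~\ref{assumptions}: the $(m,b)$-dissipativity $\langle x,\nabla F(x)\rangle\ge m\Vert x\Vert^{2}-b$, the $M$-Lipschitzness of $\nabla F$, and the pointwise bounds $F(0)\le A$ and $\Vert\nabla F(0)\Vert\le B$, which follow by averaging $|f(0,z)|\le A$ and $\Vert\nabla f(0,z)\Vert\le B$ over $z_{1},\dots,z_{n}$ (together with $F\ge 0$). The argument is just a chain of elementary estimates; there is no conceptual difficulty, only bookkeeping.

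First I would upper bound $F$ by a quadratic. By the descent lemma for $M$-smooth functions applied on the segment from $0$ to $x$, $F(x)\le F(0)+\langle\nabla F(0),x\rangle+\tfrac{M}{2}\Vert x\Vert^{2}\le A+B\Vert x\Vert+\tfrac{M}{2}\Vert x\Vert^{2}$, hence $F(x)+\tfrac{\gamma^{2}}{4}\Vert x\Vert^{2}\le A+B\Vert x\Vert+\big(\tfrac{M}{2}+\tfrac{\gamma^{2}}{4}\big)\Vert x\Vert^{2}$. Multiplying by $2\lambda$ and using $2\lambda=\min\!\big(\tfrac14,\tfrac{m}{M+\gamma^{2}/2}\big)\le \tfrac{m}{M+\gamma^{2}/2}$, the quadratic coefficient becomes $2\lambda\big(\tfrac{M}{2}+\tfrac{\gamma^{2}}{4}\big)=\lambda\big(M+\tfrac{\gamma^{2}}{2}\big)\le \tfrac{m}{2}$, so $2\lambda\big(F(x)+\tfrac{\gamma^{2}}{4}\Vert x\Vert^{2}\big)\le 2\lambda A+2\lambda B\Vert x\Vert+\tfrac{m}{2}\Vert x\Vert^{2}$.

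Next I would absorb the linear term by Young's inequality, $2\lambda B\Vert x\Vert\le \tfrac{m}{2}\Vert x\Vert^{2}+\tfrac{2\lambda^{2}B^{2}}{m}$, which gives $2\lambda\big(F(x)+\tfrac{\gamma^{2}}{4}\Vert x\Vert^{2}\big)\le m\Vert x\Vert^{2}+2\lambda A+\tfrac{2\lambda^{2}B^{2}}{m}$. Combining this with dissipativity in the form $m\Vert x\Vert^{2}\le x\cdot\nabla F(x)+b$, it remains only to verify the constant inequality $b+2\lambda A+\tfrac{2\lambda^{2}B^{2}}{m}\le \tfrac{2\overline{A}}{\beta}$. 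Plugging in the definition of $\overline{A}$ gives $\tfrac{2\overline{A}}{\beta}=\tfrac{m}{M+\gamma^{2}/2}\big(\tfrac{B^{2}}{2M+\gamma^{2}}+\tfrac{b}{m}(M+\tfrac{\gamma^{2}}{2})+A\big)=b+\tfrac{2mA}{2M+\gamma^{2}}+\tfrac{2mB^{2}}{(2M+\gamma^{2})^{2}}$, so the required inequality reduces to $2\lambda A\le \tfrac{2mA}{2M+\gamma^{2}}$ and $\tfrac{2\lambda^{2}B^{2}}{m}\le \tfrac{2mB^{2}}{(2M+\gamma^{2})^{2}}$. Both hold because $2\lambda\le \tfrac{2m}{2M+\gamma^{2}}$, hence $\lambda\le \tfrac{m}{2M+\gamma^{2}}$, i.e. $\lambda^{2}(2M+\gamma^{2})^{2}\le m^{2}$. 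This establishes \eqref{eq:drift}.

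The only point needing care is that $\lambda$ is defined as a minimum, so throughout one must use only the bound $2\lambda\le m/(M+\gamma^{2}/2)$, valid regardless of which branch attains the minimum, and never a specific value; one must also track the factor-of-two discrepancy between the denominators $M+\gamma^{2}/2$ and $2M+\gamma^{2}$ appearing in $\lambda$ and in $\overline{A}$. Note that the branch $\lambda=1/8$ is never actually used in these estimates; it is only present to meet the $\lambda<1/4$ requirement of the Lyapunov function \eqref{eq:lyapunov} inherited from \cite{Eberle}.
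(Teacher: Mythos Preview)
Your proof is correct and follows essentially the same route as the paper: both combine the dissipativity bound $x\cdot\nabla F(x)\ge m\Vert x\Vert^{2}-b$ with the quadratic upper bound $F(x)\le \tfrac{M}{2}\Vert x\Vert^{2}+B\Vert x\Vert+A$ and then absorb the linear $B\Vert x\Vert$ term via an elementary quadratic inequality. The only cosmetic difference is that the paper substitutes the bound $2\lambda\le m/(M+\gamma^{2}/2)$ at the outset and then completes the square exactly, whereas you keep $\lambda$ explicit, split the quadratic budget as $\tfrac{m}{2}+\tfrac{m}{2}$, and apply Young's inequality; the resulting constants coincide.
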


The following lemma provides uniform $L^{2}$ bounds for the continuous-time
underdamped Langevin diffusion process $(X(t),V(t))$ defined in \eqref{eq:VL}-\eqref{eq:XL} 
and discrete-time underdamped Langevin dynamics $(X_{k},V_{k})$ defined in \eqref{eq:V-iterate}-\eqref{eq:X-iterate}.

\begin{lemma}[Uniform $L^{2}$ bounds]\label{lem:L2bound}
Suppose parts $(i)$, $(ii)$, $(iii)$, $(iv)$ of Assumption \ref{assumptions}
and the drift condition \eqref{eq:drift} hold.
$\gamma>0$ is arbitrary and $\lambda$, $\overline{A}$ are defined in \eqref{eqn:lambda} and \eqref{eqn:overline:A}.
\begin{itemize} 
\item [$(i)$] It holds that
\begin{align}
&\sup_{t\geq 0}\mathbb{E}\Vert X(t)\Vert^{2}\leq 
C_{x}^{c}
:=\frac{\left(\frac{\beta M}{2}+\frac{\beta\gamma^{2}(2-\lambda)}{4}\right)R^{2}+\beta BR+\beta A
+\frac{3}{4}\beta\Vert V(0)\Vert^{2}+\frac{d+\overline{A}}{\lambda}}{\frac{1}{8} (1-2 \lambda) \beta \gamma^2}, \label{def-Cx-c}
\\
&\sup_{t\geq 0}\mathbb{E}\Vert V(t)\Vert^{2}\leq C_{v}^{c}
:=\frac{\left(\frac{\beta M}{2}+\frac{\beta\gamma^{2}(2-\lambda)}{4}\right)R^{2}+\beta BR+\beta A
+\frac{3}{4}\beta\Vert V(0)\Vert^{2}+\frac{d+\overline{A}}{\lambda}}{\frac{\beta}{4}(1-2\lambda)},
\label{def-Cxv-c}
\end{align}
\item [$(ii)$] For any stepsize $\eta$ satisfying:
\begin{equation}\label{eqn:eta4}
0<\eta\leq
\overline{\eta}_{4}^{U}
:=\min\left\{1,\frac{\gamma}{\hat{K}_{2}}(d/\beta+\overline{A}/\beta),\frac{\gamma\lambda}{2\hat{K}_{1}},\frac{2}{\gamma\lambda}\right\},
\end{equation}
where
\begin{align}
&\hat{K}_{1}:=K_{1}+Q_{1}\frac{4}{1-2\lambda}+Q_{2}\frac{8}{(1-2\lambda)\gamma^{2}},
\label{def-hat-K1}
\\
&\hat{K}_{2}:=K_{2}+Q_{3},
\label{def-hat-K2}
\end{align}
where
\begin{align}
&K_{1}
:=\max\left\{\frac{32M^{2}\left(\frac{1}{2}+\gamma\right)}{ (1-2 \lambda) \beta \gamma^2 },
\frac{8\left(\frac{1}{2}M+\frac{1}{4}\gamma^{2}-\frac{1}{4}\gamma^{2}\lambda+\gamma\right)}{\beta(1-2\lambda)}\right\},
\label{def-K1}
\\
&K_{2}:=2B^{2}\left(\frac{1}{2}+\gamma\right),\label{def-K2}
\end{align}
and
\begin{align}
&Q_{1}:=\frac{1}{2}c_{0}
\Bigg((5M+4-2\gamma+(c_{0}+\gamma^{2}))
+(1+\gamma)\left(\frac{5}{2}+c_{0}(1+\gamma)\right)
+2\gamma^{2}\lambda\Bigg),\label{def-Q1}
\\
&Q_{2}:=\frac{1}{2}c_{0}
\Bigg[\Bigg((1+\gamma)\left(c_{0}(1+\gamma)+\frac{5}{2}\right)
+c_{0}+2
+\lambda\gamma^{2}
+2(Mc_{0}+M+1)\Bigg)\cdot 2M^{2}\nonumber
\\
&\qquad\qquad\qquad
+\left(2M^{2}+\gamma^{2}\lambda+\frac{3}{2}\gamma^{2}(1+\gamma)\right)\Bigg],\label{def-Q2}
\\
&Q_{3}:=c_{0}
\Bigg((1+\gamma)\left(c_{0}(1+\gamma)+\frac{5}{2}\right)
+c_{0}+2
+\lambda\gamma^{2}
+2(Mc_{0}+M+1)\Bigg)B^{2}+c_{0}B^{2}\nonumber
\\
&\qquad\qquad\qquad
+\frac{1}{2}\gamma^{3}\beta^{-1}c_{22}
+\gamma^{2}\beta^{-1}c_{12}+M\gamma\beta^{-1}c_{22},\label{def-Q3}
\end{align}
where 
\begin{equation}\label{def-c0-c12-c22}
c_{0}:=1+\gamma^{2},\qquad c_{12}:=\frac{d}{2},\qquad c_{22}:=\frac{d}{3},
\end{equation}
we have
\begin{align}
&\sup_{j\geq 0}\mathbb{E}\Vert X_{j}\Vert^{2}
\leq
C_{x}^{d}
:=\frac{\left(\frac{\beta M}{2}+\frac{\beta\gamma^{2}(2-\lambda)}{4}\right)R^{2}+\beta BR+\beta A
+\frac{3}{4}\beta\Vert V(0)\Vert^{2}
+\frac{4(d+\overline{A})}{\lambda}}{\frac{1}{8} (1-2 \lambda) \beta \gamma^2 },\label{def-Cx-d}
\\
&\sup_{j\geq 0}\mathbb{E}\Vert V_{j}\Vert^{2}
\leq
C_{v}^{d}
:=\frac{\left(\frac{\beta M}{2}+\frac{\beta\gamma^{2}(2-\lambda)}{4}\right)R^{2}+\beta BR+\beta A
+\frac{3}{4}\beta\Vert V(0)\Vert^{2}
+\frac{4(d+\overline{A})}{\lambda}}{\frac{\beta}{4}(1-2\lambda)}.\label{def-Cxv-d}
\end{align}
\end{itemize}
\end{lemma}

\subsubsection{Proofs of auxiliary results}

\begin{proof}[Proof of Lemma~\ref{lem:ineq}]
Note that $Q_{t_{0}}(t_{1})Z_{t}^{0}$
is a $2d$-dimensional martingale and by Doob's martingale inequality,
for any $h>0$,
\begin{align}
\mathbb{P}\left(\sup_{t_{0}\leq t\leq t_{1}}\Vert Q_{t_{0}}(t_{1})Z_{t}^{0}\Vert\geq h\right)
&\leq e^{-\beta\theta h^{2}/2}
\mathbb{E}\left[e^{(\beta\theta/2)\Vert Q_{t_{0}}(t_{1})Z_{t_{1}}^{0}\Vert^{2}}\right]
\nonumber \\
&=e^{-\beta\theta h^{2}/2}
\frac{1}{\sqrt{\det(I-\beta\theta\Sigma_{t_{1}})}}e^{\frac{\beta\theta}{2}\langle\mu_{t_{1}},(I-\beta\theta\Sigma_{t_{1}})^{-1}\mu_{t_{1}}\rangle}, \label{eq:est1}
\end{align}
where the last line above uses the fact that $Q_{t_{0}}(t_{1})Z_{t_{1}}$
is a Gaussian random vector with mean
\begin{equation*}
\mu_{t_{1}}=e^{-t_{1}H_{\gamma}}(V(0),Y(0))^{T},
\end{equation*}
and covariance matrix 
\begin{align*}
\Sigma_{t_{1}}
&=2\gamma\beta^{-1}\int_{0}^{t_{1}}\left(e^{(s-t_{1})H_{\gamma}}I^{(2)}\right)
\left(e^{(s-t_{1})H_{\gamma}}I^{(2)}\right)^{T}ds
\\
&=2\gamma\beta^{-1}\int_{0}^{t_{1}}e^{-sH_{\gamma}}I^{(2)}e^{-sH_{\gamma}^{T}}ds.
\end{align*}
We next estimate $\det(I-\beta\theta\Sigma_{t_{1}})$ fron \eqref{eq:est1}. 
Let us recall from Lemma \ref{lem:keylemma}
that if $\gamma=2\sqrt{m}$, then we recall from Lemma \ref{lem:keylemma} that,
\begin{equation*}
\left\| e^{-t H_\gamma}\right\| \leq \sqrt{C_{H}+2 + (m+1)^2 t^2 }\cdot e^{-\sqrt{m}t},
\end{equation*} 
and thus, we have
\begin{equation*}
\Vert\Sigma_{t_{1}}\Vert
\leq 2\gamma\beta^{-1}\int_{0}^{t_{1}}\left(C_{H}+2+(m+1)^{2}t^{2}\right)e^{-2\sqrt{m}t}dt
\leq
\gamma\beta^{-1}\frac{2C_{H}m+4m+(m+1)^{2}}{2m\sqrt{m}}.
\end{equation*}
Therefore we infer that 
the eigenvalues of $I-\beta\theta\Sigma$ are bounded below by 
$1-\theta\frac{\gamma(2C_{H}m+4m+(m+1)^{2})}{2m\sqrt{m}}$. 
The conclusion then follows from \eqref{eq:est1}.
\end{proof}

\begin{proof}[Proof of Lemma \ref{lem:lambda:A}]
By Assumption \ref{assumptions} (iii), 
$x\cdot\nabla F(x)\geq m\Vert x\Vert^{2}-b$.
Thus in order to show 
the drift condition \eqref{eq:drift},
it suffices to show that
\begin{equation}\label{eq:drift:alter}
m\Vert x\Vert^{2}-b
-2\lambda( F(x)+ \gamma^{2}\Vert x\Vert^{2}/4)
\geq -2\overline{A}/\beta.
\end{equation}
Given the definition of $\lambda$
in \eqref{eqn:lambda}, by 
Lemma \ref{lem:gradient-bound}, we get
\begin{align*}
&m\Vert x\Vert^{2}-b
-2\lambda( F(x)+ \gamma^{2}\Vert x\Vert^{2}/4)
\\
&\geq
m\Vert x\Vert^{2}-b
-\frac{m}{M+\frac{1}{2}\gamma^{2}}( F(x)+ \gamma^{2}\Vert x\Vert^{2}/4)
\\
&\geq
\frac{mM+\frac{1}{4}m\gamma^{2}}{M+\frac{1}{2}\gamma^{2}}\Vert x\Vert^{2}-b
-\frac{m}{M+\frac{1}{2}\gamma^{2}}\left(\frac{M}{2}\Vert x\Vert^{2}+B\Vert x\Vert+A\right)
\\
&=\frac{m}{M+\frac{1}{2}\gamma^{2}}
\left(\frac{1}{2}M\Vert x\Vert^{2}+\frac{1}{4}\gamma^{2}\Vert x\Vert^{2}-B\Vert x\Vert
-\frac{b}{m}\left(M+\frac{1}{2}\gamma^{2}\right)-A\right)
\\
&\geq\frac{m}{M+\frac{1}{2}\gamma^{2}}
\left(-\frac{B^{2}}{2M+\gamma^{2}}
-\frac{b}{m}\left(M+\frac{1}{2}\gamma^{2}\right)-A\right)
=-2\overline{A}/\beta,
\end{align*}
by the definition of $\overline{A}$
in \eqref{eqn:overline:A}. 
Hence, \eqref{eq:drift:alter} holds
and the proof is complete.
\end{proof}

\begin{proof}[Proof of Lemma \ref{lem:L2bound}]
According to {Lemma 12 (i)} in \cite{GGZ}, 
\begin{align*}
&\sup_{t\geq 0}\mathbb{E}\Vert X(t)\Vert^{2}\leq 
\frac{\int_{\mathbb{R}^{2d}}\mathcal{V}(x,v)d\mu_{0}(x,v)+\frac{d+\overline{A}}{\lambda}}{\frac{1}{8} (1-2 \lambda) \beta \gamma^2}, \\
&\sup_{t\geq 0}\mathbb{E}\Vert V(t)\Vert^{2}
\leq\frac{\int_{\mathbb{R}^{2d}}\mathcal{V}(x,v)d\mu_{0}(x,v)
+\frac{d+\overline{A}}{\lambda}}{\frac{\beta}{4}(1-2\lambda)},
\end{align*}
where $\mathcal{V}$ is the Lyapunov function defined in \eqref{eq:lyapunov}
and $\mu_{0}$ is the initial distribution of $(X(0),V(0))$
and in our case, $\mu_{0}=\delta_{(X(0),V(0))}$ and $\Vert X(0)\Vert\leq R$ 
and $V(0)\in\mathbb{R}^{d}$,
and for any 
$0<\eta\leq\min\left\{1,\frac{\gamma}{\hat{K}_{2}}(d/\beta+\overline{A}/\beta),\frac{\gamma\lambda}{2\hat{K}_{1}},\frac{2}{\gamma\lambda}\right\}$ with $\hat{K}_{1}$ and $\hat{K}_{2}$ given in \eqref{def-hat-K1} and \eqref{def-hat-K2},
\footnote{Note that in the definition of $\hat{K}_{1},\hat{K}_{2}$ in \cite{GGZ}, 
there is a constant $\delta$, which is simply zero, in the context of the current paper.}
and according to {Lemma 17} in \cite{GGZ}, we also have
\begin{align*}
&\sup_{j\geq 0}\mathbb{E}\Vert X_{j}\Vert^{2}
\leq
\frac{\int_{\mathbb{R}^{2d}}\mathcal{V}(x,v)\mu_{0}(dx,dv)
+\frac{4(d+\overline{A})}{\lambda}}{\frac{1}{8} (1-2 \lambda) \beta \gamma^2 },
\\
&\sup_{j\geq 0}\mathbb{E}\Vert V_{j}\Vert^{2}
\leq
\frac{\int_{\mathbb{R}^{2d}}\mathcal{V}(x,v)\mu_{0}(dx,dv)
+\frac{4(d+\overline{A})}{\lambda}}{\frac{\beta}{4}(1-2\lambda)}.
\end{align*}
We recall from \eqref{eq:lyapunov} that
$\mathcal{V}(x,v)=\beta F(x)
+\frac{\beta}{4}\gamma^{2}(\Vert x+\gamma^{-1}v\Vert^{2}+\Vert\gamma^{-1}v\Vert^{2}-\lambda\Vert x\Vert^{2})$,
and $\Vert X(0)\Vert\leq R$ and $V(0)\in\mathbb{R}^{d}$. By Lemma \ref{lem:gradient-bound}, 
we get
\begin{equation*} 
\mathcal{V}(x,v)\leq\frac{\beta M}{2}\Vert x\Vert^{2}+\beta B\Vert x\Vert+\beta A
+\frac{\beta}{4}\gamma^{2}(\Vert x+\gamma^{-1}v\Vert^{2}+\Vert\gamma^{-1}v\Vert^{2}-\lambda\Vert x\Vert^{2})\,,
\end{equation*}
so that
\begin{align*}
&\mathcal{V}(X(0),V(0))
\\
&=\frac{\beta M}{2}\Vert X(0)\Vert^{2}+\beta B\Vert X(0)\Vert+\beta A
+\frac{\beta}{4}\gamma^{2}(2\Vert X(0)\Vert^{2}+3\gamma^{-2}\Vert V(0)\Vert^{2}-\lambda\Vert X(0)\Vert^{2})
\\
&\leq\left(\frac{\beta M}{2}+\frac{\beta\gamma^{2}(2-\lambda)}{4}\right)R^{2}+\beta BR+\beta A
+\frac{3}{4}\beta\Vert V(0)\Vert^{2}.
\end{align*}
Hence, the conclusion follows.
\end{proof}



\subsection{Proof of Theorem \ref{MainThm1:Q}}

The proof of Theorem \ref{MainThm1:Q} is similar to the proof of Theorem \ref{MainThm1}. 
For brevity, we omit some of the details, and only outline the key steps and the propositions and lemmas
used for the proof of Theorem \ref{MainThm1:Q}.

\begin{proposition}\label{prop:key:Q}
Fix any $r>0$ and $0<\varepsilon<\min\{\overline{\varepsilon}_{1}^{J},\overline{\varepsilon}_{2}^{J}\}$, where
\begin{equation}\label{eqn:e:J}
\overline{\varepsilon}_{1}^{J}:=\frac{m_{J}(\tilde{\varepsilon})}{4C_{J}(\tilde{\varepsilon})(1+\Vert J\Vert) L(1+\frac{1}{64C_{J}(\tilde{\varepsilon})^{2}})},
\qquad
\overline{\varepsilon}_{2}^{J}:=8rC_{J}(\tilde{\varepsilon}).
\end{equation}
Consider the stopping time:
\begin{equation*}
\tau:=\inf\left\{t\geq 0:\Vert X(t)-x_{\ast}\Vert\geq\varepsilon+re^{-m_{J}(\tilde{\varepsilon}) t}\right\}.
\end{equation*}
For any initial point $X(0)=x$ with $\Vert x-x_{\ast}\Vert \le r$, and 
\begin{equation*}
\beta\geq\frac{128C_{J}(\tilde{\varepsilon})^{2}}{m_{J}(\tilde{\varepsilon})\varepsilon^{2}}\left(\frac{d}{2}\log(2)
+\log\left(\frac{2(1+\Vert J\Vert)M\mathcal{T}+1}{\delta}\right)\right),
\end{equation*}
we have
\begin{equation*}
\mathbb{P}_{x}\left(\tau\in[\mathcal{T}_{\text{rec}}^{J},\mathcal{T}_{\text{esc}}^{J}]\right)\leq\delta.
\end{equation*}
\end{proposition}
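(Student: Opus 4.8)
The plan is to follow the architecture of the proof of Proposition~\ref{prop:key}, replacing the underdamped matrix flow $e^{-tH_\gamma}$ by the non-reversible one $e^{-tA_JH}$ and invoking Lemma~\ref{lem:key:anti} in place of Lemma~\ref{lem:keylemma}. Write $X(t)=Y(t)+x_\ast$ and Taylor-expand the gradient about the local minimum, $\nabla F(X(t))=HY(t)-\rho(Y(t))$ with $\|\rho(y)\|\le\frac{L}{2}\|y\|^2$ because $\nabla^2F$ is $L$-Lipschitz. Then $dY(t)=-A_JHY(t)\,dt+A_J\rho(Y(t))\,dt+\sqrt{2\beta^{-1}}\,dB_t$; for a fixed window $[t_0,t_1]$ introduce the matrix flow $Q_{t_0}(t):=e^{(t_0-t)A_JH}$ and set $Z(t):=e^{(t-t_0)A_JH}Y(t)=Z_t^0+Z_t^1$, where
\begin{equation*}
Z_t^0=e^{-t_0A_JH}Y(0)+\sqrt{2\beta^{-1}}\int_0^t e^{(s-t_0)A_JH}\,dB_s,\qquad Z_t^1=\int_0^t e^{(s-t_0)A_JH}A_J\rho(Y(s))\,ds,
\end{equation*}
so that $Q_{t_0}(t_1)Z_t^0$ is a martingale.

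First I would reduce to a window estimate as in \eqref{bound:two:terms}: for $t_1-t_0\le\frac{1}{2\|A_JH\|}$ one has $\|Y(t)\|=\|e^{(t_1-t)A_JH}Q_{t_0}(t_1)Z_t\|\le e^{1/2}\|Q_{t_0}(t_1)Z_t\|$ on $[t_0,t_1]$, hence $\mathbb{P}(\tau\in[t_0,t_1])$ is at most the sum of the probabilities, on $\{\tau\ge t_0\}$, that $\|Q_{t_0}(t_1)Z_t^0\|$ and $\|Q_{t_0}(t_1)Z_t^1\|$ exceed $c_0(\varepsilon+re^{-m_J(\tilde\varepsilon)t})$ and $c_1(\varepsilon+re^{-m_J(\tilde\varepsilon)t})$ for some $t\le t_1\wedge\tau$, where $c_0+c_1=\frac12$. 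For the remainder term, on $\{\tau\ge t_0\}$ we have $\|\rho(Y(s))\|\le\frac{L}{2}(\varepsilon+re^{-m_J(\tilde\varepsilon)s})^2$ for $s\le t_1\wedge\tau$, so using $\|A_J\|\le 1+\|J\|$ and Lemma~\ref{lem:key:anti},
\begin{equation*}
\|Q_{t_0}(t_1)Z_t^1\|\le\frac{(1+\|J\|)C_J(\tilde\varepsilon)L}{2}\int_0^{t_1}e^{(s-t_1)m_J(\tilde\varepsilon)}\big(\varepsilon+re^{-m_J(\tilde\varepsilon)s}\big)^2\,ds.
\end{equation*}
Expanding the square, integrating, and using $t_1\ge\mathcal{T}_{\text{rec}}^J$ together with the defining relation of $\mathcal{T}_{\text{rec}}^J$ to control $r^2e^{-m_J(\tilde\varepsilon)t_1}$ by a constant multiple of $\varepsilon^2$, this is at most $c_1\varepsilon$ with $c_1$ of order $\frac{(1+\|J\|)C_J(\tilde\varepsilon)L}{m_J(\tilde\varepsilon)}\big(1+\frac{1}{64C_J(\tilde\varepsilon)^2}\big)\varepsilon$; the hypothesis $\varepsilon<\overline{\varepsilon}_1^J$ then forces $c_1<\frac14$, so that probability is $0$ and $c_0:=\frac12-c_1>\frac14$.

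Next I would establish the analogue of Lemma~\ref{lem:ineq}: by Doob's maximal inequality applied to the submartingale $e^{(\beta\theta/2)\|Q_{t_0}(t_1)Z_t^0\|^2}$ and the Gaussian moment generating function, using that $Q_{t_0}(t_1)Z_{t_1}^0\sim\mathcal{N}(\mu_{t_1},\Sigma_{t_1})$ with $\mu_{t_1}=e^{-t_1A_JH}Y(0)$ and $\Sigma_{t_1}=2\beta^{-1}\int_0^{t_1}e^{-sA_JH}e^{-s(A_JH)^{T}}\,ds$,
\begin{equation*}
\mathbb{P}\Big(\sup_{t_0\le t\le t_1}\|Q_{t_0}(t_1)Z_t^0\|\ge h\Big)\le\frac{1}{\sqrt{\det(I-\beta\theta\Sigma_{t_1})}}\exp\Big(-\frac{\beta\theta}{2}\big[h^2-\langle\mu_{t_1},(I-\beta\theta\Sigma_{t_1})^{-1}\mu_{t_1}\rangle\big]\Big).
\end{equation*}
By Lemma~\ref{lem:key:anti}, $\|\Sigma_{t_1}\|\le 2\beta^{-1}C_J(\tilde\varepsilon)^2\int_0^\infty e^{-2sm_J(\tilde\varepsilon)}\,ds=\frac{C_J(\tilde\varepsilon)^2}{\beta m_J(\tilde\varepsilon)}$, so the choice $\theta=\frac{m_J(\tilde\varepsilon)}{2C_J(\tilde\varepsilon)^2}$ makes every eigenvalue of $I-\beta\theta\Sigma_{t_1}$ at least $\frac12$, whence $\det(\cdot)^{-1/2}\le 2^{d/2}$ and $\langle\mu_{t_1},(I-\beta\theta\Sigma_{t_1})^{-1}\mu_{t_1}\rangle\le 2\|\mu_{t_1}\|^2\le 2C_J(\tilde\varepsilon)^2r^2e^{-2m_J(\tilde\varepsilon)t_1}$, which for $t_1\ge\mathcal{T}_{\text{rec}}^J$ is at most $\frac{\varepsilon^2}{32}$ by the definition of $\mathcal{T}_{\text{rec}}^J$ and the bound $\varepsilon<\overline{\varepsilon}_2^J$. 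Taking $h=c_0(\varepsilon+re^{-m_J(\tilde\varepsilon)t_1})\ge\frac{\varepsilon}{4}$ yields, for every admissible window, $\mathbb{P}(\tau\in[t_0,t_1])\le 2^{d/2}\exp\big(-\frac{\beta m_J(\tilde\varepsilon)\varepsilon^2}{128\,C_J(\tilde\varepsilon)^2}\big)$.

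Finally I would partition $[\mathcal{T}_{\text{rec}}^J,\mathcal{T}_{\text{esc}}^J]=[\mathcal{T}_{\text{rec}}^J,\mathcal{T}_{\text{rec}}^J+\mathcal{T}]$ into $\lceil 2\|A_JH\|\mathcal{T}\rceil+1$ subintervals of length at most $\frac{1}{2\|A_JH\|}$, apply the window estimate on each, and union-bound; bounding $\|A_JH\|\le(1+\|J\|)M$ and imposing $\beta\ge\underline{\beta}_1^J$ makes the total at most $\delta$, which is the assertion. The main obstacle, as compared with the reversible case, is that $A_JH$ is neither symmetric nor in general diagonalizable, so one cannot reduce to a single scalar decay rate: the polynomial-in-$t$ factors from Jordan blocks must be absorbed into $C_J(\tilde\varepsilon)$ at the cost of degrading the rate from $\lambda_1^J$ to $m_J(\tilde\varepsilon)=\lambda_1^J-\tilde\varepsilon$ via Lemma~\ref{lem:key:anti}, and the delicate bookkeeping is to choose $c_0,c_1,\theta$, the slack $\tilde\varepsilon$, and the constant inside $\mathcal{T}_{\text{rec}}^J$ so that the remainder term vanishes identically and the Gaussian tail is summably small under exactly the stated thresholds $\overline{\varepsilon}_1^J,\overline{\varepsilon}_2^J,\underline{\beta}_1^J$.
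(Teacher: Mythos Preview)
Your proposal is correct and follows essentially the same approach as the paper: you linearize around $x_\ast$, introduce the matrix flow $Q_{t_0}(t)=e^{(t_0-t)A_JH}$ and the decomposition $Z_t=Z_t^0+Z_t^1$, kill the $Z_t^1$ contribution deterministically via Lemma~\ref{lem:key:anti} together with the definition of $\mathcal{T}_{\text{rec}}^J$ and the threshold $\overline{\varepsilon}_1^J$, bound the Gaussian martingale $Q_{t_0}(t_1)Z_t^0$ via Doob's inequality and the covariance estimate $\|\Sigma_{t_1}\|\le C_J(\tilde\varepsilon)^2/(\beta m_J(\tilde\varepsilon))$ with $\theta=m_J(\tilde\varepsilon)/(2C_J(\tilde\varepsilon)^2)$, and finish by a union bound over $\lceil 2\|A_JH\|\mathcal{T}\rceil$ windows using $\|A_JH\|\le(1+\|J\|)M$. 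This is precisely the paper's argument (which it packages as Lemmas~\ref{lem:ineq:Q} and~\ref{lem:ineq:2:Q} and then says to mimic Proposition~\ref{prop:key} with $d$ replaced by $d/2$); if anything, you have written out more of the bookkeeping than the paper itself does.
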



\subsubsection{Completing the proof of Theorem \ref{MainThm1:Q}}\label{sec:complete:proof:Q}

We first compare the discrete dynamics \eqref{eqn:nonreversible:discrete} 
and the continuous dynamics \eqref{eqn:nonreversible1}.
Define: 
\begin{equation}\label{eqn:tilde:X}
\tilde{X}(t)=X_0
-\int_{0}^{t}A_{J}\left(\nabla F(\tilde{X}(\lfloor s/\eta\rfloor\eta))\right)ds+\sqrt{2\gamma \beta^{-1}}\int_{0}^{t}dB_{s}.
\end{equation}
The process $\tilde{X}$ defined in \eqref{eqn:tilde:X} is the continuous-time interpolation of the iterates $\{X_{k}\}$. 
In particular, the joint distribution of $\{X_{k}: k = 1, 2, \ldots, K\}$ 
is the same as $\{\tilde{X}(t): t = \eta, 2 \eta, \ldots, K \eta\}$ for any positive integer $K$. 

By following Lemma 7 in \cite{Raginsky} and apply the uniform $L^{2}$
bounds for $X_{k}$ in Corollary \ref{cor:L2:discrete} provided
that the stepsize $\eta$ is sufficiently small (we apply
the bound $\Vert A_{J}\Vert\leq 1+\Vert J\Vert$ to Corollary \ref{cor:L2:discrete})
\begin{equation}\label{eqn:eta4:J}
\eta\leq\overline{\eta}_{4}^{J}:=\frac{1}{M(1+\Vert J\Vert)^{2}},
\end{equation} 
we will obtain an upper bound on the
relative entropy $D(\cdot\Vert\cdot)$ between 
the law $\tilde{\mathbb{P}}^{K\eta}$ of $(\tilde{X}(t):t\leq K\eta)$
and the law $\mathbb{P}^{K\eta}$ of $(X(t):t\leq K\eta)$, 
and by Pinsker's inequality an upper bound on 
the total variation $\Vert\cdot\Vert_{TV}$ as well. More precisely, we have
\begin{equation}\label{eqn:entropy}
\left\Vert\tilde{\mathbb{P}}^{K\eta}-\mathbb{P}^{K\eta}\right\Vert_{TV}^{2}
\leq\frac{1}{2}D\left(\tilde{\mathbb{P}}^{K\eta}\Big\Vert\mathbb{P}^{K\eta}\right)
\leq\frac{1}{2}C_{1}K\eta^{2},
\end{equation}
where (we use the bound $\Vert A_{J}\Vert\leq 1+\Vert J\Vert$)
\begin{equation}\label{eqn:C1}
C_{1}:=6(\beta((1+\Vert J\Vert)^{2}M^{2}C_{d}+B^{2})+d)(1+\Vert J\Vert)^{2}M^{2},
\end{equation}
where $C_{d}$ is defined in \eqref{eqn:Cd}.

Let us now complete the proof of Theorem \ref{MainThm1:Q}.
We need to show that
\begin{equation*}
\mathbb{P}\left((X_{1},\ldots,X_{K})\in\mathcal{A}\right)\leq\delta,
\end{equation*}
where 
$K=\lfloor\eta^{-1}\mathcal{T}_{\text{esc}}^{J}\rfloor$ and $\mathcal{A}:=\mathcal{A}_{1}\cap\mathcal{A}_{2}$:
\begin{align*}
&\mathcal{A}_{1}:=\left\{(x_{1},\ldots,x_{K})\in(\mathbb{R}^{d})^{K}:\max_{k\leq\eta^{-1}\mathcal{T}_{\text{rec}}^{J}}
\frac{\Vert x_{k}-x_{\ast}\Vert}{\varepsilon+re^{-m_{J}(\tilde{\varepsilon}) k\eta}}\leq\frac{1}{2}\right\},
\\
&\mathcal{A}_{2}:=\left\{(x_{1},\ldots,x_{K})\in(\mathbb{R}^{d})^{K}:\max_{\eta^{-1}\mathcal{T}_{\text{rec}}^{J}\leq k\leq K}
\frac{\Vert x_{k}-x_{\ast}\Vert}{\varepsilon+re^{-m_{J}(\tilde{\varepsilon})k\eta}}\geq 1\right\}.
\end{align*}
Similar to the proof in Section \ref{sec:complete:proof} and by \eqref{eqn:entropy}, we get
\begin{equation}\label{in:view:1:J}
\mathbb{P}((X_{1},\ldots,X_{K})\in\mathcal{A})
\leq\mathbb{P}((X(\eta),\ldots,X(K\eta))\in\mathcal{A})+\frac{\delta}{3},
\end{equation}
provided that
\begin{equation}\label{eqn:eta3:J}
\eta\leq
\overline{\eta}_{3}^{J}:=
\frac{2\delta^{2}}{9C_{1}\mathcal{T}_{\text{esc}}^{J}}.
\end{equation} 
It remains to estimate the probability of $\mathbb{P}((X(\eta),\ldots,X(K\eta))\in\mathcal{A}_{1}\cap\mathcal{A}_{2})$ 
for the non-reversible Langevin diffusion.
Partition the interval $[0,\mathcal{T}_{\text{rec}}^{J}]$
using the points $0=t_{1}<t_{1}<\cdots<t_{\lceil\eta^{-1}\mathcal{T}_{\text{rec}}^{J}\rceil}
=\mathcal{T}_{\text{rec}}^{J}$
with $t_{k}=k\eta$ for $k=0,1,\ldots,\lceil\eta^{-1}\mathcal{T}_{\text{rec}}^{J}\rceil-1$,
and consider the event: 
\begin{equation*}
\mathcal{B}:=\left\{\max_{0\leq k\leq
\lceil\eta^{-1}\mathcal{T}_{\text{rec}}^{J}\rceil-1}\max_{t\in[t_{k},t_{k+1}]}\Vert X(t)-X(t_{k+1})\Vert
\leq\frac{\varepsilon}{2}\right\}.
\end{equation*}
Similar to the proof in Section \ref{sec:complete:proof}, we get
\begin{equation}\label{in:view:2:J}
\mathbb{P}((X(\eta),\cdots,X(K\eta))\in\mathcal{A})
\leq
\frac{\delta}{3}+\mathbb{P}(\mathcal{B}^{c})\,,
\end{equation}
provided that (by applying Proposition \ref{prop:key:Q}): 
\begin{equation}\label{eqn:beta1:J}
\beta\geq
\underline{\beta}_{1}^{J}:=\frac{128C_{J}(\tilde{\varepsilon})^{2}}{m_{J}(\tilde{\varepsilon})\varepsilon^{2}}\left(\frac{d}{2}\log(2)+\log\left(\frac{6(1+\Vert J\Vert)M\mathcal{T}+3}{\delta}\right)\right).
\end{equation}

To complete the proof, we need to show that
$\mathbb{P}(\mathcal{B}^{c})\leq\frac{\delta}{3}$ in view of \eqref{in:view:1:J} and \eqref{in:view:2:J}.
For any $t\in[t_{k},t_{k+1}]$, where $t_{k+1}-t_{k}=\eta$, we have
\begin{align*}
&\Vert X(t)-X(t_{k+1})\Vert
\nonumber
\\
&\leq
\int_{t}^{t_{k+1}}\Vert A_{J}\nabla F(X(s))\Vert ds
+\sqrt{2\beta^{-1}}\Vert B_{t}-B_{t_{k+1}}\Vert
\nonumber
\\
&\leq 
\Vert A_{J}\Vert M\int_{t}^{t_{k+1}}\Vert X(s)-X(t_{k+1})\Vert ds
+\eta\Vert A_{J}\nabla F(X(t_{k+1}))\Vert
+\sqrt{2\beta^{-1}}\Vert B_{t}-B_{t_{k+1}}\Vert
\nonumber
\\
&\leq
\Vert A_{J}\Vert M\int_{t}^{t_{k+1}}\Vert X(s)-X(t_{k+1})\Vert ds
\nonumber
\\
&\qquad\qquad\qquad
+\eta\Vert A_{J}\Vert\cdot(M\Vert X(t_{k+1})\Vert+B)
+\sqrt{2\beta^{-1}}\Vert B_{t}-B_{t_{k+1}}\Vert\,.
\end{align*}
By Gronwall's inequality, we get the key estimate:
\begin{align*}
&\sup_{t\in[t_{k},t_{k+1}]}
\Vert X(t)-X(t_{k+1})\Vert
\\
&\leq
e^{\eta\Vert A_{J}\Vert M}\left[\eta\Vert A_{J}\Vert\cdot(M\Vert X(t_{k+1})\Vert+B)
+\sqrt{2\beta^{-1}}\sup_{t\in[t_{k},t_{k+1}]}\Vert B_{t}-B_{t_{k+1}}\Vert\right].
\end{align*}
Then, by following the same argument as in Section \ref{sec:complete:proof}
and also apply $\Vert A_{J}\Vert\leq 1+\Vert J\Vert$, 
we can show that $\mathbb{P}(\mathcal{B}^{c})\leq\frac{\delta}{3}$
provided that 
$\eta\leq 1$ and
\begin{equation}\label{eqn:eta1:J}
\eta\leq
\overline{\eta}_{1}^{J}:=\frac{\varepsilon e^{-(1+\Vert J\Vert)M}}{8(1+\Vert J\Vert)B},
\end{equation}
and
\begin{equation}\label{eqn:eta2:J}
\eta\leq
\overline{\eta}_{2}^{J}:=
\frac{\delta\varepsilon^{2}e^{-2(1+\Vert J\Vert)M}}{384(1+\Vert J\Vert)^{2}M^{2}C_{c}\mathcal{T}_{\text{rec}}^{J}}\,,
\end{equation}
where $C_{c}$ is defined in \eqref{eqn:Cc} and
\begin{equation}\label{eqn:beta2:J}
\beta\geq
\underline{\beta}_{2}^{J}:=\frac{512d\eta\log(2^{1/4}e^{1/4}6\delta^{-1}\mathcal{T}_{\text{rec}}^{J}/\eta)}
{\varepsilon^{2}e^{-2(1+\Vert J\Vert)M\eta}}\,.
\end{equation} 

To complete the proof, we need work on the leading orders of the constants.
We treat $\Vert J\Vert$, $M$, $L$ as constant.
The argument is similar to the argument in the proof of Theorem \ref{MainThm1}
and is thus omitted here. The proof is now complete.

\subsubsection{Proof of Proposition \ref{prop:key:Q}}

Before we proceed to the proof of Proposition \ref{prop:key:Q}, let 
us first state the following two lemmas that will be used in the proof of Proposition \ref{prop:key:Q}.

\begin{lemma}\label{lem:ineq:Q}
For any $\theta\in (0,\frac{\lambda_{1}^{J}-\tilde{\varepsilon}}{(C_{J}(\tilde{\varepsilon}))^{2}})$, $h>0$ and $y_{0}\in\mathbb{R}^{d}$,
\begin{equation*}
\mathbb{P}\left(\sup_{t_{0}\leq t\leq t_{1}}\left\Vert Q_{t_{0}}(t_{1})Z_{t}^{0}\right\Vert\geq h\right)
\leq\left(1-\theta\frac{(C_{J}(\tilde{\varepsilon}))^{2}}{\lambda_{1}^{J}-\tilde{\varepsilon}}\right)^{-d/2}
e^{-\frac{\beta\theta}{2}[h^{2}-\langle\mu_{t_{1}},(I-\beta\theta\Sigma_{t_{1}})^{-1}\mu_{t_{1}}\rangle]},
\end{equation*}
where $Q_{t_{0}}(t_{1})$ is defined in \eqref{eqn:matrix:flow:Q},
$Z_{t}^{0}$ is defined in \eqref{eqn:Z0:Q}, and
\begin{equation}\label{defn:mu:Sigma:J}
\mu_{t}:=e^{-tA_{J}H}y_{0},
\qquad
\Sigma_{t}:=2\beta^{-1}\int_{0}^{t}e^{-s(A_{J}H)}e^{-s(A_{J}H)^{T}}ds.
\end{equation}
\end{lemma}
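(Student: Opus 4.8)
The plan is to mirror the proof of Lemma~\ref{lem:ineq} from the underdamped setting, now working with the $d$-dimensional process $Q_{t_0}(t_1)Z_t^0$ in place of its $2d$-dimensional analogue, and replacing the matrix $H_\gamma$ by $A_JH$ and Lemma~\ref{lem:keylemma} by Lemma~\ref{lem:key:anti}. First I would record that, by the definitions in \eqref{eqn:matrix:flow:Q} and \eqref{eqn:Z0:Q},
\[
Q_{t_0}(t_1)Z_t^0 = e^{-t_1A_JH}y_0 + \sqrt{2\beta^{-1}}\int_0^t e^{(s-t_1)A_JH}\,dB_s,
\]
so that $\bigl(Q_{t_0}(t_1)Z_t^0\bigr)_{t_0\le t\le t_1}$ is a continuous $d$-dimensional martingale (the stochastic-integral part is a martingale because its integrand is deterministic and bounded on $[0,t_1]$). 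Hence $t\mapsto\|Q_{t_0}(t_1)Z_t^0\|$ is a non-negative submartingale, and so is $t\mapsto\exp\bigl(\tfrac{\beta\theta}{2}\|Q_{t_0}(t_1)Z_t^0\|^2\bigr)$, since $x\mapsto e^{(\beta\theta/2)x^2}$ is convex and non-decreasing on $[0,\infty)$. Applying Doob's maximal inequality for submartingales to this exponential functional gives
\[
\mathbb{P}\!\left(\sup_{t_0\le t\le t_1}\|Q_{t_0}(t_1)Z_t^0\|\ge h\right)\le e^{-\beta\theta h^2/2}\,\mathbb{E}\!\left[e^{(\beta\theta/2)\|Q_{t_0}(t_1)Z_{t_1}^0\|^2}\right].
\]

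Next I would evaluate the expectation on the right. By construction $Q_{t_0}(t_1)Z_{t_1}^0$ is Gaussian with mean $\mu_{t_1}=e^{-t_1A_JH}y_0$ and covariance $\Sigma_{t_1}=2\beta^{-1}\int_0^{t_1}e^{-s(A_JH)}e^{-s(A_JH)^T}\,ds$, exactly as in \eqref{defn:mu:Sigma:J}. The standard moment generating function of a Gaussian quadratic form then yields, whenever $I-\beta\theta\Sigma_{t_1}$ is positive definite,
\[
\mathbb{E}\!\left[e^{(\beta\theta/2)\|Q_{t_0}(t_1)Z_{t_1}^0\|^2}\right]=\frac{1}{\sqrt{\det(I-\beta\theta\Sigma_{t_1})}}\,e^{\frac{\beta\theta}{2}\langle\mu_{t_1},(I-\beta\theta\Sigma_{t_1})^{-1}\mu_{t_1}\rangle}.
\]
It remains to bound $\det(I-\beta\theta\Sigma_{t_1})$ from below, and here I would invoke Lemma~\ref{lem:key:anti}, which gives $\|e^{-sA_JH}\|\le C_J(\tilde\varepsilon)e^{-s m_J(\tilde\varepsilon)}$ with $m_J(\tilde\varepsilon)=\lambda_1^J-\tilde\varepsilon$. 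Integrating,
\[
\|\Sigma_{t_1}\|\le 2\beta^{-1}\int_0^{\infty}\|e^{-sA_JH}\|^2\,ds\le\frac{C_J(\tilde\varepsilon)^2}{\beta\,(\lambda_1^J-\tilde\varepsilon)},
\]
so every eigenvalue of $I-\beta\theta\Sigma_{t_1}$ lies in $\bigl[\,1-\theta C_J(\tilde\varepsilon)^2/(\lambda_1^J-\tilde\varepsilon),\,1\,\bigr]$, which is contained in $(0,1]$ precisely because $\theta<(\lambda_1^J-\tilde\varepsilon)/C_J(\tilde\varepsilon)^2$ by hypothesis; in particular the positive-definiteness used above holds. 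Consequently $\det(I-\beta\theta\Sigma_{t_1})\ge\bigl(1-\theta C_J(\tilde\varepsilon)^2/(\lambda_1^J-\tilde\varepsilon)\bigr)^d$, and combining the three displays yields the claimed inequality.

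I do not expect a genuine obstacle in this lemma: each step is the exact analogue of the corresponding step in the proof of Lemma~\ref{lem:ineq}, with $H_\gamma\rightsquigarrow A_JH$, the dimension $2d\rightsquigarrow d$, and the bound $\|\Sigma_{t_1}\|\le\gamma\beta^{-1}(2C_Hm+4m+(m+1)^2)/(2m\sqrt m)$ replaced by $\|\Sigma_{t_1}\|\le C_J(\tilde\varepsilon)^2/\bigl(\beta(\lambda_1^J-\tilde\varepsilon)\bigr)$. The only points requiring a modicum of care are the verification that $Q_{t_0}(t_1)Z_t^0$ is genuinely a martingale on $[t_0,t_1]$ (immediate from the explicit form above) and the integrability needed to invoke Doob's inequality for the exponential functional, which follows from the Gaussian tail of the finite-dimensional vector $Q_{t_0}(t_1)Z_{t_1}^0$ together with the condition on $\theta$ that keeps $I-\beta\theta\Sigma_{t_1}$ strictly positive definite.
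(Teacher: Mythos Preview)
Your proposal is correct and follows essentially the same route as the paper's own proof: the paper also reduces to the argument of Lemma~\ref{lem:ineq}, applies Doob's martingale inequality to obtain the Gaussian MGF bound, and then uses Lemma~\ref{lem:key:anti} to estimate $\|\Sigma_{t_1}\|\le \beta^{-1}C_J(\tilde\varepsilon)^2/(\lambda_1^J-\tilde\varepsilon)$, from which the determinant bound follows. Your write-up is in fact more explicit than the paper's (which simply says ``by following the proof of Lemma~\ref{lem:ineq}'' and ``the rest of the proof follows similarly''), but the logical content is identical.
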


\begin{lemma}\label{lem:ineq:2:Q}
Given $t_{0}\leq t\leq (t_{1}\wedge\tau)$, 
where $\tau$ is the stopping time
defined in Proposition \ref{prop:key:Q},
we have
\begin{equation*}
\left\Vert Q_{t_{0}}(t_{1})Z_{t}^{1}\right\Vert
\leq\frac{C_{J}(\tilde{\varepsilon})\Vert A_{J}\Vert L}{2}\int_{0}^{t}e^{ (s-t_{1})m_{J}(\tilde{\varepsilon})}\left(\varepsilon+re^{-m_{J}(\tilde{\varepsilon})s}\right)^{2}ds,
\end{equation*}
where $Q_{t_{0}}(t_{1})$ is defined in \eqref{eqn:matrix:flow:Q},
and $Z_{t}^{1}$ is defined in \eqref{eqn:Z1:Q}.
\end{lemma}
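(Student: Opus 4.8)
The statement to prove is Lemma~\ref{lem:ineq:2:Q}, which bounds the contribution of the nonlinear remainder term $Z_t^1$ to the flow $Q_{t_0}(t_1) Z_t^1$ along the stopped trajectory.

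The plan is to proceed exactly as in the underdamped case (the corresponding estimate inside the proof of Proposition~\ref{prop:key}), but now using the simpler single-component structure of the non-reversible dynamics. First I would recall the decomposition of the drift around the local minimum: writing $X(t) = Y(t) + x_\ast$, Taylor expansion of $\nabla F$ with $L$-Lipschitz Hessian gives $\nabla F(X(t)) = H Y(t) - \rho(Y(t))$ with $\|\rho(Y(t))\| \le \tfrac{L}{2}\|Y(t)\|^2$ (this is Lemma~1.2.4 in \cite{Nesterov}, already invoked in the excerpt). Plugging $A_J \nabla F$ into the variation-of-constants formula for the SDE \eqref{eqn:nonreversible} produces the term $Z_t^1 = \int_0^t e^{(s-t_0)A_J H} A_J \rho(Y(s))\, ds$ (matching \eqref{eqn:Z1:Q}), so that $Q_{t_0}(t_1) Z_t^1 = \int_0^t e^{(s-t_1)A_J H} A_J \rho(Y(s))\, ds$.

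Next I would take norms inside the integral and use submultiplicativity together with the operator-norm bound on the matrix exponential. By Lemma~\ref{lem:key:anti} (in the form \eqref{CJ:mJ}) we have $\|e^{(s-t_1)A_J H}\| = \|e^{-(t_1-s)A_J H}\| \le C_J(\tilde\varepsilon)\, e^{-(t_1-s)m_J(\tilde\varepsilon)}$ for $s \le t_1$, and $\|A_J \rho(Y(s))\| \le \|A_J\|\, \tfrac{L}{2}\|Y(s)\|^2$. The key point is that on the event $\{t_0 \le t \le t_1 \wedge \tau\}$, for every $s \le t$ we have not yet exceeded the stopping time $\tau$, so $\|Y(s)\| = \|X(s) - x_\ast\| \le \varepsilon + r e^{-m_J(\tilde\varepsilon) s}$ by definition of $\tau$ in Proposition~\ref{prop:key:Q}. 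Combining these three bounds termwise gives
\begin{equation*}
\left\| Q_{t_0}(t_1) Z_t^1 \right\|
\le \frac{C_J(\tilde\varepsilon) \|A_J\| L}{2} \int_0^t e^{(s-t_1) m_J(\tilde\varepsilon)} \left( \varepsilon + r e^{-m_J(\tilde\varepsilon) s} \right)^2 ds,
\end{equation*}
which is precisely the claimed inequality. I would state the event carefully so that the square-of-the-tail bound applies over the whole range $[0,t]$ of the integration variable, not just near $t$; this uses that $\tau$ is a first-exit time, hence monotone in the sense that $s \le t \le \tau$ forces $\|Y(s)\| \le \varepsilon + re^{-m_J(\tilde\varepsilon)s}$.

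I do not expect any serious obstacle here: unlike the underdamped analogue, the diffusion has an invertible (elliptic) coefficient and the flow is generated by the single matrix $A_J H$, so there is no block structure or Jordan-block $t$-polynomial factor to track beyond what is already packaged into $C_J(\tilde\varepsilon)$ and $m_J(\tilde\varepsilon)$. The only point requiring a little care is making sure the constant $\|A_J\|$ is carried through (rather than prematurely replaced by $1 + \|J\|$), since the statement of the lemma is phrased with $\|A_J\|$; the bound $\|A_J\| \le 1 + \|J\|$ is applied only later when assembling Theorem~\ref{MainThm1:Q}. A minor subtlety is that the integral is left unevaluated in the lemma statement — the explicit integration (splitting $(\varepsilon + re^{-m_J s})^2 = \varepsilon^2 + 2\varepsilon r e^{-m_J s} + r^2 e^{-2m_J s}$ and integrating each piece against $e^{(s-t_1)m_J}$) is deferred to the proof of Proposition~\ref{prop:key:Q}, exactly as in the underdamped case, so I would stop at the displayed bound.
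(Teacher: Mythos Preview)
Your proposal is correct and follows essentially the same approach as the paper's proof: take norms inside the integral defining $Q_{t_0}(t_1)Z_t^1$, apply the matrix-exponential bound from Lemma~\ref{lem:key:anti}, the Lipschitz-Hessian remainder estimate $\|\rho(Y(s))\|\le \tfrac{L}{2}\|Y(s)\|^2$, and the pathwise bound $\|Y(s)\|\le \varepsilon+re^{-m_J(\tilde\varepsilon)s}$ valid for $s\le t\le\tau$. Your remarks about retaining $\|A_J\|$ at this stage and leaving the integral unevaluated are also consistent with how the paper organizes the argument.
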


\begin{proof}[Proof of Proposition \ref{prop:key:Q}]
We recall $x_{\ast}$ is a local minimum of $F$ and $H$ is the Hessian matrix:
$H=\nabla^{2}F(x_{\ast})$, and we write 
\begin{equation*}
X(t)=Y(t)+x_{\ast}.
\end{equation*}
Thus, we have the decomposition
\begin{equation*}
\nabla F(X(t))=HY(t)-\rho(Y(t)),
\end{equation*}
where $\Vert\rho(Y(t))\Vert\leq\frac{1}{2}L\Vert Y(t)\Vert^{2}$
since the Hessian of $F$ is $L$-Lipschitz (Lemma 1.2.4. \cite{Nesterov}).
This implies that
\begin{equation*}
dY(t)=-A_{J}HY(t)dt
+A_{J}\rho(Y(t))dt+\sqrt{2\beta^{-1}}dB_{t}.
\end{equation*}
Thus, we get
\begin{equation*}
Y(t)=e^{-tA_{J}H}Y(0)
+\sqrt{2\beta^{-1}}\int_{0}^{t}e^{(s-t)A_{J}H}dB_{s}
+\int_{0}^{t}e^{(s-t)A_{J}H}A_{J}\rho (Y(s))ds.
\end{equation*}
Given $0\leq t_{0}\leq t_{1}$, we define the matrix flow 
\begin{equation}\label{eqn:matrix:flow:Q}
Q_{t_{0}}(t):=e^{(t_{0}-t)A_{J}H},
\end{equation}
and $Z_{t}:=e^{(t-t_{0})A_{J}H}Y_{t}$ so that
\begin{equation*}
Z_{t}=e^{-t_{0}A_{J}H}Y(0)
+\sqrt{2\beta^{-1}}\int_{0}^{t}e^{(s-t_{0})A_{J}H}dB_{s}
+\int_{0}^{t}e^{(s-t_{0})A_{J}H}A_{J}\rho (Y(s))ds.
\end{equation*}
We define the decomposition $Z_{t}=Z_{t}^{0}+Z_{t}^{1}$, where
\begin{align}
&Z_{t}^{0}=e^{-t_{0}A_{J}H}Y(0)
+\sqrt{2\beta^{-1}}\int_{0}^{t}e^{(s-t_{0})A_{J}H}dB_{s},\label{eqn:Z0:Q}
\\
&Z_{t}^{1}=\int_{0}^{t}e^{(s-t_{0})A_{J}H}A_{J}\rho (Y(s))ds.\label{eqn:Z1:Q}
\end{align}
It follows that for any $t_{0}\leq t\leq t_{1}$,
\begin{align*}
&Q_{t_{0}}(t_{1})Z_{t}^{1}=\int_{0}^{t}e^{(s-t_{1})A_{J}H}A_{J}\rho (Y(s))ds,
\\
&Q_{t_{0}}(t_{1})Z_{t}^{0}=e^{-t_{1}A_{J}H}Y(0)
+\sqrt{2\beta^{-1}}\int_{0}^{t}e^{(s-t_{1})A_{J}H}dB_{s}.
\end{align*}
The rest of the proof is similar to the proof of Proposition \ref{prop:key}.
We apply Lemma \ref{lem:ineq:2:Q} to bound the term $Q_{t_{0}}(t_{1})Z_{t}^{1}$
and apply Lemma \ref{lem:ineq:Q} to bound the term $Q_{t_{0}}(t_{1})Z_{t}^{0}$.
By letting $\gamma=1$ in Proposition \ref{prop:key}
and replacing $d$ by $d/2$ due to Lemma \ref{lem:ineq:Q}, 
and $\Vert H_{\gamma}\Vert$ by $\Vert A_{J}H\Vert$ and using
the bounds $\Vert A_{J}\Vert\leq(1+\Vert J\Vert)$ and $\Vert A_{J}H\Vert\leq(1+\Vert J\Vert)M$,
we obtain the desired result in Proposition \ref{prop:key:Q}.
\end{proof}

\subsubsection{Uniform $L^2$ bounds for NLD}
In this section we establish uniform $L^2$ bounds for both the continuous time dynamics \eqref{eqn:nonreversible1}
and discrete time dynamics \eqref{eqn:nonreversible:discrete}. 
The main idea of the proof is to use Lyapunov functions. 
Our local analysis result relies on the approximation of the continuous time dynamics \eqref{eqn:nonreversible1}
by the discrete time dynamics \eqref{eqn:nonreversible:discrete}. 
The uniform $L^{2}$ bound for the discrete dynamics \eqref{eqn:nonreversible:discrete}
is used to derive the relative entropy
to compare the laws of the continuous time dynamics and the discrete time dynamics,
and the uniform $L^{2}$ bound for the continuous dynamics \eqref{eqn:nonreversible1}
is used to control
the tail of the continuous dynamics in Section \ref{sec:complete:proof:Q}.
We first recall the continuous-time dynamics from \eqref{eqn:nonreversible1}:
\begin{equation*}
dX(t)=-A_{J}(\nabla F(X(t)))dt+\sqrt{2\beta^{-1}}dB_{t},\qquad A_{J}=I+J,
\end{equation*}
where $J$ is a $d\times d$ anti-symmetric matrix, 
i.e. $J^{T}=-J$. 
The generator of this continuous time process is given by
\begin{equation}\label{eqn:generator}
\mathcal{L} = -A_J \nabla F  \cdot \nabla + \beta^{-1} \Delta
\end{equation}

\begin{lemma} \label{lem:cont-anti-L2}
Given $X(0)=x\in\mathbb{R}^{d}$,
\begin{equation*}
\mathbb{E} [F(X(t))]
\leq
F(x) 
+\frac{B}{2}+A+\frac{b(M+B)}{m}+\frac{2M\beta^{-1}d(M+B)}{m^{2}}.
\end{equation*}
\end{lemma}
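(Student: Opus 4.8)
The plan is to apply It\^o's formula to $F(X(t))$, extract a Lyapunov drift inequality of the form $\mathcal{L}F(x)\le -\alpha F(x)+C$, and integrate it with Gr\"onwall's lemma. First, using the generator $\mathcal{L}$ in \eqref{eqn:generator} together with the antisymmetry $J^{T}=-J$, which gives $\langle A_{J}\nabla F(x),\nabla F(x)\rangle=\|\nabla F(x)\|^{2}+\langle J\nabla F(x),\nabla F(x)\rangle=\|\nabla F(x)\|^{2}$, we compute
\[
\mathcal{L}F(x)=-\langle A_{J}\nabla F(x),\nabla F(x)\rangle+\beta^{-1}\Delta F(x)=-\|\nabla F(x)\|^{2}+\beta^{-1}\Delta F(x).
\]
By Assumption~\ref{assumptions}$(ii)$ we have $\|\nabla^{2}F(x)\|\le M$, so every eigenvalue of $\nabla^{2}F(x)$ lies in $[-M,M]$ and hence $\Delta F(x)=\mathrm{Tr}\bigl(\nabla^{2}F(x)\bigr)\le dM$.

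Second, I would convert $-\|\nabla F(x)\|^{2}$ into a term proportional to $-F(x)$. Combining the dissipativity bound $\langle x,\nabla F(x)\rangle\ge m\|x\|^{2}-b$ from Assumption~\ref{assumptions}$(iii)$ with Cauchy--Schwarz and Young's inequality yields $\|x\|^{2}\le \tfrac{1}{m^{2}}\|\nabla F(x)\|^{2}+\tfrac{2b}{m}$ for all $x$; inserting this into the quadratic growth bound $F(x)\le \tfrac{M}{2}\|x\|^{2}+B\|x\|+A$ from Lemma~\ref{lem:gradient-bound} (after $B\|x\|\le \tfrac{B}{2}(1+\|x\|^{2})$) gives an estimate $F(x)\le c_{1}\|\nabla F(x)\|^{2}+c_{2}$ with explicit constants $c_{1}=\mathcal{O}\bigl((M+B)/m^{2}\bigr)$ and $c_{2}=\mathcal{O}\bigl(A+B+b(M+B)/m\bigr)$, hence $-\|\nabla F(x)\|^{2}\le -c_{1}^{-1}F(x)+c_{1}^{-1}c_{2}$. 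Substituting into the expression for $\mathcal{L}F$ produces the drift inequality $\mathcal{L}F(x)\le -\alpha F(x)+C$ with $\alpha=c_{1}^{-1}$ and $C=c_{1}^{-1}c_{2}+\beta^{-1}dM$; the Young's-inequality parameters are to be chosen so that $C/\alpha$ matches the stated bound.

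Third, set $u(t):=\mathbb{E}[F(X(t))]$. Applying It\^o's formula on the stopped interval $[0,t\wedge\tau_{n}]$ with $\tau_{n}:=\inf\{s\ge 0:\|X(s)\|\ge n\}$, taking expectations (the stochastic integral $\int_{0}^{\cdot}\sqrt{2\beta^{-1}}\langle\nabla F(X(s)),dB_{s}\rangle$ is a genuine martingale on this bounded, stopped interval, so it has zero mean), using $F\ge 0$ since each $f(\cdot,z)\ge 0$, and letting $n\to\infty$ via Fatou's lemma and non-explosion of the diffusion, I obtain the integrated inequality $u(t)\le F(x)+\int_{0}^{t}\bigl(-\alpha u(s)+C\bigr)\,ds$. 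Gr\"onwall then gives $u(t)\le F(x)e^{-\alpha t}+\tfrac{C}{\alpha}(1-e^{-\alpha t})\le F(x)+\tfrac{C}{\alpha}$, and it remains only to check that $\tfrac{C}{\alpha}=c_{2}+c_{1}\beta^{-1}dM$ is bounded by $\tfrac{B}{2}+A+\tfrac{b(M+B)}{m}+\tfrac{2M\beta^{-1}d(M+B)}{m^{2}}$, which is bookkeeping through the two inequalities of the previous step (with room to spare).

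The main technical point, though routine, is justifying that the local-martingale term contributes nothing in expectation; I would handle this by the localization/stopping-time argument just described, invoking $F\ge 0$ and non-explosion rather than verifying an a priori $L^{2}$ bound on $\nabla F(X(\cdot))$ directly. The only step requiring any care in the estimates is the passage from dissipativity plus quadratic growth to $F(x)\le c_{1}\|\nabla F(x)\|^{2}+c_{2}$, where the constants should be tracked so as to reproduce exactly the asserted bound.
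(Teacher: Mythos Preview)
Your proposal is correct and follows essentially the same approach as the paper: compute $\mathcal{L}F(x)=-\|\nabla F(x)\|^{2}+\beta^{-1}\Delta F(x)$ via antisymmetry of $J$, convert this into a drift inequality $\mathcal{L}F\le -\alpha F+C$ using dissipativity together with the quadratic upper bound on $F$, and then integrate with a localization argument. The only (cosmetic) difference is that the paper obtains $\|\nabla F(x)\|\ge \tfrac{m}{2}\|x\|$ by a case split on $\|x\|\gtrless\sqrt{2b/m}$, whereas you get $\|x\|^{2}\le m^{-2}\|\nabla F(x)\|^{2}+2b/m$ directly via Young's inequality; your version actually yields a slightly sharper constant in front of the $\beta^{-1}d$ term, which is still bounded by the stated expression.
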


Since $F$ has at most the quadratic growth (due to Lemma \ref{lem:gradient-bound}), 
we immediately have the following corollary.

\begin{corollary}\label{cor:L2:continuous}
Given $\Vert X(0)\Vert\leq R=\sqrt{b/m}$,
\begin{equation}\label{eqn:Cc}
\mathbb{E} [\Vert X(t)\Vert^{2}]
\leq
C_{c}:=\frac{MR^{2}+2BR+B+4A}{m}
+\frac{2b(M+B)}{m^{2}}+\frac{4M\beta^{-1}d(M+B)}{m^{3}}+\frac{b}{m}\log 3.
\end{equation}
\end{corollary}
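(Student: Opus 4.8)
The plan is to bootstrap the uniform moment bound on $F(X(t))$ from Lemma~\ref{lem:cont-anti-L2} into a uniform second-moment bound on $X(t)$, using that $F$ is sandwiched between two quadratics. The reason Lemma~\ref{lem:cont-anti-L2} is phrased in terms of $F$ rather than $\|X(t)\|^2$ is that $F$ is the natural Lyapunov function for the dynamics~\eqref{eqn:nonreversible}: since $J$ is anti-symmetric, $\langle J\nabla F(x),\nabla F(x)\rangle=0$, so the non-reversible part of the drift does not contribute to $\mathcal{L}F = -\langle A_J\nabla F,\nabla F\rangle + \beta^{-1}\Delta F = -\|\nabla F\|^2 + \beta^{-1}\Delta F$, whereas it does contribute to $\mathcal{L}\|x\|^2 = -2\langle A_J\nabla F,x\rangle + 2\beta^{-1}d$. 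Hence I would first record Lemma~\ref{lem:cont-anti-L2} and then convert via a two-sided quadratic control of $F$.

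First, the upper side. By Lemma~\ref{lem:gradient-bound} (quadratic growth of $F$, coming from $|F(0)|\le A$, $\|\nabla F(0)\|\le B$ and the $M$-Lipschitzness of $\nabla F$) one has $F(x)\le \tfrac{M}{2}\|x\|^2+B\|x\|+A$ for all $x$; combined with the initialization bound $\|X(0)\|\le R=\sqrt{b/m}$ in the statement, this gives $F(X(0))\le \tfrac{M}{2}R^2+BR+A$. Substituting this into the bound of Lemma~\ref{lem:cont-anti-L2} yields a uniform (in $t$) upper bound on $\E[F(X(t))]$ that is an explicit polynomial in $M,B,A,R$, $b/m$ and $\beta^{-1}d/m$.

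Second, the lower side. Here I would use the $(m,b)$-dissipativity~\eqref{eqn:dissipative} together with $F\ge 0$ (Assumption~\ref{assumptions}(i)) to produce a quadratic lower bound $F(x)\ge c_1\|x\|^2-c_2$ with $c_1$ a fixed multiple of $m$ and $c_2$ a fixed multiple of $b$. Concretely, for fixed $x$ set $\phi(s):=F(sx)$, so that $\phi'(s)=\langle x,\nabla F(sx)\rangle=\tfrac{1}{s}\langle sx,\nabla F(sx)\rangle\ge m s\|x\|^2-\tfrac{b}{s}$ for $s>0$ by~\eqref{eqn:dissipative}; integrating this over $s\in[s_0,1]$ for a fixed $s_0\in(0,1)$ and using $\phi(s_0)=F(s_0x)\ge 0$ gives $F(x)\ge \tfrac{m}{2}(1-s_0^2)\|x\|^2+b\log s_0$. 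The choice $s_0=1/\sqrt{3}$ is the source of the $\log 3$ appearing in $C_c$; it yields $\|x\|^2\le \tfrac{3}{m}\bigl(F(x)+\tfrac{b}{2}\log 3\bigr)$. Applying this pointwise bound to $x=X(t)$, taking expectations, and inserting the uniform upper bound on $\E[F(X(t))]$ from the previous step gives a uniform bound on $\E[\|X(t)\|^2]$; collecting the terms into a single expression and bookkeeping the numerical factors gives $\E[\|X(t)\|^2]\le C_c$ with $C_c$ as in~\eqref{eqn:Cc}.

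The ``hard part'' is really only bookkeeping: there is no new probabilistic or analytic content beyond Lemma~\ref{lem:cont-anti-L2}. The one point to be careful about is the dissipativity-to-coercivity step — one must not integrate $\phi'$ all the way down to $s=0$ (the bound $m s\|x\|^2 - b/s$ is not integrable there), and one must check that discarding the boundary term $F(s_0x)$ via non-negativity suffices to close the estimate with the advertised constants; everything else is substitution.
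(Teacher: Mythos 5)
Your strategy is exactly the paper's: sandwich $F$ between two quadratics, feed the uniform bound on $\mathbb{E}[F(X(t))]$ from Lemma~\ref{lem:cont-anti-L2} through the lower quadratic, and bound $F(X(0))$ through the upper quadratic using $\Vert X(0)\Vert\leq R$. The structure is sound, and your derivation of the coercivity bound from dissipativity (integrating $\frac{d}{ds}F(sx)\ge ms\Vert x\Vert^{2}-b/s$ over $[1/\sqrt{3},1]$ and discarding $F(s_0x)\ge 0$) is correct; it yields $F(x)\ge\frac{m}{3}\Vert x\Vert^{2}-\frac{b}{2}\log 3$, which is precisely the lower bound stated in Lemma~\ref{lem:gradient-bound}.

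The gap is in your final ``bookkeeping'' claim. Your lower bound gives the conversion $\Vert x\Vert^{2}\le\frac{3}{m}F(x)+\frac{3b}{2m}\log 3$, and pushing this through Lemma~\ref{lem:cont-anti-L2} produces a constant in which every term of \eqref{eqn:Cc} is multiplied by $3/2$ (e.g. $\frac{3MR^{2}}{2m}$ in place of $\frac{MR^{2}}{m}$, and $\frac{3b}{2m}\log 3$ in place of $\frac{b}{m}\log 3$) --- not $C_c$ itself. Reproducing \eqref{eqn:Cc} exactly requires $\Vert x\Vert^{2}\le\frac{2}{m}F(x)+\frac{b}{m}\log 3$, i.e. $F(x)\ge\frac{m}{2}\Vert x\Vert^{2}-\frac{b}{2}\log 3$, which is what the paper's own proof invokes (it attributes this to Lemma~\ref{lem:gradient-bound}, although that lemma only provides the coefficient $m/3$; your calculation shows the coefficient $m/2$ cannot be reached by the integration argument with a finite additive constant, since it would force $s_0\to 0$). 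So your argument proves the corollary with $C_c$ replaced by a termwise $3/2$-inflated constant --- immaterial for every downstream use of $C_c$, but not literally the stated bound. You should either state the constant you actually obtain or flag the mismatch between the paper's proof and its Lemma~\ref{lem:gradient-bound}.
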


We next show uniform $L^2$ bounds for the discrete iterates $X_k$, where
we recall from \eqref{eqn:nonreversible:discrete} that the non-reversible Langevin dynamics is given by:
\begin{equation*}
X_{k+1}=X_{k}-\eta A_{J}(\nabla F(X_{k}))+\sqrt{2\eta\beta^{-1}}\xi_{k}.
\end{equation*}

\begin{lemma}\label{lem:disc-anti-L2}
Given that $\eta\leq\min\left\{\frac{1}{M\Vert A_{J}\Vert^{2}},\frac{4(M+B)}{m^{2}}\right\}$, we have
\begin{equation*}
\mathbb{E}_x[F(X_k)] \le F(x) 
+\frac{B}{2}+A
+\frac{4(M+B)M\beta^{-1}d}{m^{2}}
+\frac{(M+B)b}{m}.
\end{equation*}
\end{lemma}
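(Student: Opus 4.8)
The plan is to mimic the continuous-time argument behind Lemma~\ref{lem:cont-anti-L2} (and Corollary~\ref{cor:L2:continuous}), but carried out at the level of one discrete step and then iterated. First I would write one iteration of \eqref{eqn:nonreversible:discrete} as $X_{k+1} = X_k - \eta A_J \nabla F(X_k) + \sqrt{2\eta\beta^{-1}}\,\xi_k$ and apply the descent lemma for the $M$-smooth function $F$ (which holds since $f(\cdot,z)$ has $M$-Lipschitz gradients uniformly in $z$, hence so does $F$):
\begin{equation*}
F(X_{k+1}) \le F(X_k) + \langle \nabla F(X_k), X_{k+1}-X_k\rangle + \frac{M}{2}\|X_{k+1}-X_k\|^2 .
\end{equation*}
Taking conditional expectation given $X_k$ and using $\E[\xi_k]=0$, $\E\|\xi_k\|^2=d$, together with the key identity $\langle \nabla F(X_k), A_J\nabla F(X_k)\rangle = \|\nabla F(X_k)\|^2$ (because $J$ is anti-symmetric, so $\langle u, Ju\rangle = 0$), the linear term contributes $-\eta\|\nabla F(X_k)\|^2$, and the quadratic term contributes $\frac{M}{2}(\eta^2\|A_J\nabla F(X_k)\|^2 + 2\eta\beta^{-1}d)$. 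Bounding $\|A_J\nabla F(X_k)\|^2 \le \|A_J\|^2\|\nabla F(X_k)\|^2$ and using the stepsize restriction $\eta\le \frac{1}{M\|A_J\|^2}$, the coefficient $-\eta + \frac{M}{2}\eta^2\|A_J\|^2$ in front of $\|\nabla F(X_k)\|^2$ is at most $-\eta/2$. This gives the one-step drift inequality
\begin{equation*}
\E[F(X_{k+1})\mid X_k] \le F(X_k) - \frac{\eta}{2}\|\nabla F(X_k)\|^2 + \eta M\beta^{-1} d .
\end{equation*}

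Next I would convert the dissipativity Assumption~\ref{assumptions}(iii), $\langle x,\nabla F(x)\rangle \ge m\|x\|^2 - b$, into a lower bound on $\|\nabla F(x)\|^2$ in terms of $F(x)$, so that the drift becomes a genuine contraction. The standard route: from Cauchy--Schwarz, $\|\nabla F(x)\|\,\|x\| \ge m\|x\|^2 - b$, and combining this with the quadratic growth bounds on $F$ coming from Lemma~\ref{lem:gradient-bound} (namely $F(x)\le \frac{M}{2}\|x\|^2 + B\|x\| + A$ and $\nabla F$ being $M$-Lipschitz), one derives an inequality of the form $\|\nabla F(x)\|^2 \ge c_1 F(x) - c_2$ for explicit constants $c_1>0$, $c_2$ depending on $m,M,A,B,b$. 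Plugging this into the one-step inequality yields $\E[F(X_{k+1})\mid X_k] \le (1-\tfrac{\eta c_1}{2}) F(X_k) + \eta(\tfrac{c_2}{2} + M\beta^{-1}d)$. Iterating this geometric recursion from $X_0=x$ and taking the supremum over $k$ gives $\sup_k \E_x[F(X_k)] \le F(x) + \frac{c_2/2 + M\beta^{-1}d}{c_1/2}$, and after substituting the explicit $c_1,c_2$ and simplifying one should land on exactly the stated bound $F(x) + \frac{B}{2} + A + \frac{4(M+B)M\beta^{-1}d}{m^2} + \frac{(M+B)b}{m}$; this is the same bookkeeping already done in the continuous case for Lemma~\ref{lem:cont-anti-L2}, just with the extra $\eta$-dependent terms controlled by the stepsize hypothesis.

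I expect the main obstacle to be purely in the constant-chasing: getting the lower bound $\|\nabla F(x)\|^2 \ge c_1 F(x) - c_2$ with constants clean enough that the iterated recursion collapses to precisely the claimed right-hand side, rather than to a messier expression. The anti-symmetry cancellation $\langle u, A_J u\rangle = \|u\|^2$ is the only genuinely structural ingredient and is immediate; the stepsize condition $\eta\le \frac{1}{M\|A_J\|^2}$ is exactly what is needed to absorb the noise-amplification term from the $\frac{M}{2}\|X_{k+1}-X_k\|^2$ contribution. One subtlety to watch: the cross term $\langle \nabla F(X_k), \sqrt{2\eta\beta^{-1}}\xi_k\rangle$ vanishes in conditional expectation, and the cross term between $-\eta A_J\nabla F(X_k)$ and the noise inside $\|X_{k+1}-X_k\|^2$ also vanishes, so no variance-type correction survives beyond the $\eta M\beta^{-1}d$ already accounted for. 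Once the recursion is set up correctly, the bound follows by the standard geometric-series estimate, with no additional probabilistic input needed.
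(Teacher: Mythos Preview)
Your proposal is correct and follows essentially the same approach as the paper: the paper likewise applies the descent lemma to one step of \eqref{eqn:nonreversible:discrete}, uses the anti-symmetry $\langle u,Ju\rangle=0$ and the stepsize bound $\eta\le 1/(M\|A_J\|^2)$ to obtain the one-step drift $\E_x[F(X_1)]-F(x)\le \eta\bigl(-\tfrac{1}{2}\|\nabla F(x)\|^2 + M\beta^{-1}d\bigr)$, converts dissipativity (via a case split on $\|x\|\gtrless\sqrt{2b/m}$) and the quadratic upper bound on $F$ from Lemma~\ref{lem:gradient-bound} into $-\tfrac{1}{2}\|\nabla F(x)\|^2\le -\tfrac{m^2}{4(M+B)}F(x)+\tfrac{m^2(\frac{B}{2}+A)}{4(M+B)}+\tfrac{mb}{4}$, and then iterates the resulting geometric recursion (using a discrete Dynkin/stopping-time/Fatou argument to handle integrability) to arrive at the stated constants.
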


Since $F$ has at most the quadratic growth (due to Lemma \ref{lem:gradient-bound}), 
we immediately have the following corollary.

\begin{corollary}\label{cor:L2:discrete}
Given that $\eta\leq\min\left\{\frac{1}{M\Vert A_{J}\Vert^{2}},\frac{4(M+B)}{m^{2}}\right\}$ and $\Vert X(0)\Vert\leq R=\sqrt{b/m}$, we have
\begin{equation}\label{eqn:Cd}
\mathbb{E}[\Vert X_{k}\Vert^{2}] 
\leq
C_{d}:=\frac{MR^{2}+2BR+B+4A}{m}
+\frac{8(M+B)M\beta^{-1}d}{m^{3}}
+\frac{2(M+B)b}{m^{2}}
+\frac{b}{m}\log 3.
\end{equation}
\end{corollary}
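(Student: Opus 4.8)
\textbf{Proof proposal for Corollary~\ref{cor:L2:discrete}.}
The plan is to read off the bound as an immediate consequence of Lemma~\ref{lem:disc-anti-L2} together with the two-sided quadratic growth control of $F$ recorded in Lemma~\ref{lem:gradient-bound}. Recall that Lemma~\ref{lem:gradient-bound} supplies an upper bound $F(y)\le \tfrac{M}{2}\|y\|^{2}+B\|y\|+A$ (a consequence of the $M$-Lipschitzness of $\nabla f$ and the bounds $|f(0,z)|\le A$, $\|\nabla f(0,z)\|\le B$ in Assumption~\ref{assumptions}(i)--(ii)), and a matching lower bound of the form $F(y)\ge \tfrac{m}{2}\|y\|^{2}-\tfrac{b}{2}\log 3$ (a consequence of the $(m,b)$-dissipativity in Assumption~\ref{assumptions}(iii); this is the standard dissipativity-to-growth estimate, cf.\ \cite{Raginsky}). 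So the only real content beyond Lemma~\ref{lem:disc-anti-L2} is to use these to pass from a statement about $\mathbb{E}[F(X_k)]$ to one about $\mathbb{E}[\|X_k\|^{2}]$.

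Concretely, first I would rearrange the lower bound into $\|y\|^{2}\le \tfrac{2}{m}F(y)+\tfrac{b}{m}\log 3$, so that, taking $y=X_k$ and expectations,
\[
\mathbb{E}_x\big[\|X_k\|^{2}\big]\;\le\;\tfrac{2}{m}\,\mathbb{E}_x\big[F(X_k)\big]\;+\;\tfrac{b}{m}\log 3 .
\]
Then I would substitute the estimate of Lemma~\ref{lem:disc-anti-L2}, which holds under exactly the stepsize restriction $\eta\le \tfrac{1}{M\|A_J\|^{2}}$ assumed in the corollary, namely $\mathbb{E}_x[F(X_k)]\le F(x)+\tfrac{B}{2}+A+\tfrac{4(M+B)M\beta^{-1}d}{m^{2}}+\tfrac{(M+B)b}{m}$, and bound $F(x)$ at the deterministic starting point $x=X(0)$ via the upper bound in Lemma~\ref{lem:gradient-bound} and the hypothesis $\|X(0)\|\le R=\sqrt{b/m}$, which gives $F(x)\le \tfrac{M}{2}R^{2}+BR+A$. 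Collecting the resulting terms (the bound is uniform in $k$ since the right-hand side of Lemma~\ref{lem:disc-anti-L2} is) reproduces precisely the constant $C_d$ in \eqref{eqn:Cd}; this final step is pure bookkeeping of constants.

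There is no real obstacle at the level of the corollary: the substance — the discrete-time Lyapunov/drift computation for the scheme \eqref{eqn:nonreversible:discrete} — is entirely encapsulated in Lemma~\ref{lem:disc-anti-L2}. The only points that need care are (i) invoking the correct (lower-bound) direction of Lemma~\ref{lem:gradient-bound} so as to convert the function-value bound into a norm bound, keeping track of the factor $2/m$ and the additive $\tfrac{b}{m}\log 3$, and (ii) checking that the stepsize condition $\eta\le \tfrac{1}{M\|A_J\|^{2}}$ is exactly the hypothesis under which Lemma~\ref{lem:disc-anti-L2} is valid, so that no further constraint on $\eta$ is introduced by this corollary.
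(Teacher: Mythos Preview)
Your proposal is correct and follows exactly the approach the paper takes: the paper's proof of Corollary~\ref{cor:L2:discrete} is omitted as ``similar to the proof of Corollary~\ref{cor:L2:continuous},'' and that proof proceeds precisely as you describe—convert $\mathbb{E}[F(X_k)]$ to $\mathbb{E}[\|X_k\|^2]$ via the lower bound $F(y)\ge \tfrac{m}{2}\|y\|^2-\tfrac{b}{2}\log 3$, invoke Lemma~\ref{lem:disc-anti-L2}, and bound $F(X(0))$ using the upper growth bound with $\|X(0)\|\le R$. Your bookkeeping of the constants reproduces $C_d$ exactly.
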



\subsubsection{Proofs of auxiliary results}

\begin{proof}[Proof of Lemma \ref{lem:ineq:Q}]
By following the proof of Lemma \ref{lem:ineq}. We get
\begin{equation*}
\mathbb{P}\left(\sup_{t_{0}\leq t\leq t_{1}}\left\Vert Q_{t_{0}}(t_{1})Z_{t}^{0}\right\Vert\geq h\right)
\leq\frac{1}{\sqrt{\det(I-\beta\theta\Sigma_{t_{1}})}}
e^{-\frac{\beta\theta}{2}[h^{2}-\langle\mu_{t_{1}},(I-\beta\theta\Sigma_{t_{1}})^{-1}\mu_{t_{1}}\rangle]},
\end{equation*}
Recall from \eqref{eq:norm_J} that for any $\tilde{\varepsilon}>0$, 
there exists some $C_{J}(\tilde{\varepsilon})$ such that for every $t\geq 0$,
\begin{equation*}
\left\| e^{-tA_{J}H}\right\| \leq C_{J}(\tilde{\varepsilon}) e^{-(\lambda_{1}^{J}-\tilde{\varepsilon})t},
\end{equation*}
Hence, by the definition of $\Sigma_{t}$ from \eqref{defn:mu:Sigma:J}, we get
\begin{equation*}
\Vert\Sigma_{t}\Vert
\leq 2\beta^{-1}\int_{0}^{\infty}(C_{J}(\tilde{\varepsilon}))^{2} e^{-2(\lambda_{1}^{J}-\tilde{\varepsilon})t}dt
=\frac{\beta^{-1}(C_{J}(\tilde{\varepsilon}))^{2}}{\lambda_{1}^{J}-\tilde{\varepsilon}}.
\end{equation*}
The rest of the proof follows similarly as in the proof of Lemma \ref{lem:ineq}.
\end{proof}

\begin{proof}[Proof of Lemma \ref{lem:ineq:2:Q}]
Note that
\begin{equation*}
\left\Vert Q_{t_{0}}(t_{1})Z_{t}^{1}\right\Vert
\leq\int_{0}^{t}\left\Vert e^{ (s-t_{1})A_{J}H}\right\Vert \left\Vert A_{J}\right\Vert 
\left\Vert\rho(Y(s))\right\Vert ds,
\end{equation*}
and by applying $\Vert\rho(Y(t))\Vert\leq\frac{1}{2}L\Vert Y(t)\Vert^{2}$
and \eqref{eq:norm_J}, 
and $t_{0}\leq t\leq (t_{1}\wedge\tau)$
and the definition of the stopping
time $\tau$ in Proposition \ref{prop:key:Q},
we get the desired result.
\end{proof}

\begin{proof}[Proof of Lemma~\ref{lem:cont-anti-L2}]
Note that if we can show that $F(x)$ is a Lyapunov function for $X(t)$:
\begin{equation}\label{eq:drift-b}
\mathcal{L} F(x) \le -\epsilon_{1} F(x) +b_{1},
\end{equation}
for some $\epsilon_{1}, b_{1} >0$, then 
\begin{equation*}
\mathbb{E} [F(X(t))]
\leq
F(x) + \frac{b_{1}}{\epsilon_{1}}.
\end{equation*}
Let us first prove this.
Applying Ito formula to $e^{\epsilon_{1}t} F(X(t))$, we obtain from Dynkin formula and the drift condition \eqref{eq:drift-b} that
for $t_K:=\min\{t, \tau_K\}$ with $\tau_K$ be the exit time of $X(t)$ from a ball 
centered at $0$ with radius $K$ with $X(0)=x$,
\begin{equation*}
\mathbb{E}[e^{\epsilon_{1} t_K } F(X(t_K))]
\leq F(x) + \mathbb{E}\left[\int_{0}^{t_K} b_{1} e^{\epsilon_{1} s} ds\right] 
\leq F(x) + \int_{0}^t b_{1} e^{\epsilon_{1} s} ds
\leq F(x) + \frac{b_{1}}{\epsilon_{1}} \cdot e^{\epsilon_{1} t}.
\end{equation*}
Let $K \rightarrow \infty$, then we can infer from Fatou's lemma that
for any $t$:
\begin{equation*}
\mathbb{E}\left[e^{\epsilon_{1} t}F(X(t))\right] \le F(x) + \frac{b_{1}}{\epsilon_{1}} \cdot e^{\epsilon_{1} t}.
\end{equation*}
Hence, we have
\begin{equation*}
\mathbb{E} [F(X(t))]
\leq
F(x) + \frac{b_{1}}{\epsilon_{1}}.
\end{equation*}

Next, let us prove \eqref{eq:drift-b}. 
By the definition of $\mathcal{L}$ in \eqref{eqn:generator}, 
we can compute that
\begin{align*}
\mathcal{L}F(x)
&=-A_{J}\nabla F(x)\cdot\nabla F(x)
+\beta^{-1}\Delta F(x)
\\
&=-\Vert\nabla F(x)\Vert^{2}+\beta^{-1}\Delta F(x),
\end{align*}
since $J$ is anti-symmetric so that $\langle\nabla F(x),J\nabla F(x)\rangle=0$.
Moreover, 
\begin{equation}\label{eqn:similar:1}
\Vert x\Vert\cdot\Vert\nabla F(x)\Vert
\geq\langle x,\nabla F(x)\rangle
\geq m\Vert x\Vert^{2}-b,
\end{equation}
implies that
\begin{equation}\label{eqn:similar:2}
\Vert\nabla F(x)\Vert
\geq m\Vert x\Vert-\frac{b}{\Vert x\Vert}\geq\frac{1}{2}m\Vert x\Vert,
\end{equation}
provided that $\Vert x\Vert\geq\sqrt{2b/m}$, 
and thus 
\begin{equation}\label{eqn:similar:3}
\mathcal{L}F(x)
\leq-\frac{m^{2}}{4}\Vert x\Vert^{2}+\beta^{-1}\Delta F(x)
\leq
-\frac{m^{2}}{4}\Vert x\Vert^{2}+\frac{mb}{2}+\beta^{-1}\Delta F(x),
\end{equation}
for any $\Vert x\Vert\geq\sqrt{2b/m}$.
On the other hand, for any $\Vert x\Vert\leq\sqrt{2b/m}$, 
we have 
\begin{equation}\label{eqn:similar:4}
\mathcal{L}F(x)\leq\beta^{-1}\Delta F(x)
\leq-\frac{m^{2}}{4}\Vert x\Vert^{2}+\frac{mb}{2}+\beta^{-1}\Delta F(x).
\end{equation}
Hence, for any $x\in\mathbb{R}^{d}$,
\begin{equation}\label{eqn:similar:5}
\mathcal{L}F(x)
\leq-\frac{m^{2}}{4}\Vert x\Vert^{2}+\frac{mb}{2}+\beta^{-1}\Delta F(x).
\end{equation}
Next, recall that $F$ is $M$-smooth, and thus
\begin{equation*}
\Delta F(x)\leq Md.
\end{equation*}
Finally, by Lemma \ref{lem:gradient-bound},
\begin{equation*}
F(x)
\leq\frac{M}{2}\Vert x\Vert^{2}+B\Vert x\Vert+A
\leq\frac{M+B}{2}\Vert x\Vert^{2}+\frac{B}{2}+A.
\end{equation*}
Therefore, we have
\begin{equation*}
\mathcal{L}F(x)
\leq
-\frac{m^{2}}{2(M+B)}F(x)+\frac{m^{2}(\frac{B}{2}+A)}{2(M+B)}+\frac{mb}{2}+M\beta^{-1}d.
\end{equation*}
Hence, the proof is complete.
\end{proof}


\begin{proof}[Proof of Corollary~\ref{cor:L2:continuous}]
Recall from Lemma \ref{lem:gradient-bound} that 
\begin{equation*}
F(x)\geq\frac{m}{2}\Vert x\Vert^{2}-\frac{b}{2}\log 3,
\end{equation*}
which implies that
\begin{equation*}
\Vert x\Vert^{2}\leq\frac{2}{m}F(x)+\frac{b}{m}\log 3.
\end{equation*}
It then follows from Lemma~\ref{lem:cont-anti-L2} that
\begin{equation*}
\mathbb{E} [\Vert X(t)\Vert^{2}]
\leq
\frac{2}{m}F(x) 
+\frac{B}{m}+\frac{2A}{m}
+\frac{2b(M+B)}{m^{2}}+\frac{4M\beta^{-1}d(M+B)}{m^{3}}+\frac{b}{m}\log 3.
\end{equation*}
Recall that $\Vert X(0)\Vert=\Vert x\Vert\leq R$ and by Lemma \ref{lem:gradient-bound} we get
$F(x)\leq\frac{M}{2}\Vert x\Vert^{2}+B\Vert x\Vert+A$, and thus
\begin{equation*}
\mathbb{E} [\Vert X(t)\Vert^{2}]
\leq
C_{c}=\frac{MR^{2}+2BR+B+4A}{m}
+\frac{2b(M+B)}{m^{2}}+\frac{4M\beta^{-1}d(M+B)}{m^{3}}+\frac{b}{m}\log 3.
\end{equation*}
\end{proof}

\begin{proof}[Proof of Lemma~\ref{lem:disc-anti-L2}]
Suppose we have
\begin{equation}  \label{eq:drift-d}
\frac{\mathbb{E }_x [ F(X_1) ] - F(x)}{\eta} \le -\epsilon_{2} F(x) + b_{2},
\end{equation}
uniformly for small $\eta$, where $\epsilon_{2}, b_{2}$ are positive constants that are independent of $\eta$, 
then we will first show below that 
\begin{equation*}
\mathbb{E}_x[F(X_k)] \le F(x) + \frac{b_{2}}{\epsilon_{2}}.
\end{equation*}
We will use the discrete Dynkin's formula (see, e.g. Section 4.2 in \cite{Meyn92}).
Let $\mathbb{F}_{i}$ denote the filtration generated by $X_0, \ldots, X_i$. 
Note $\{X_k: k \ge 0\}$ is a time-homogeneous Markov process, so
the drift condition \eqref{eq:drift-d} implies that
\begin{equation*} 
\mathbb{E} [ F(X_i) | \mathbb{F}_{i-1}] \le (1- \eta \epsilon_{2}) F(X_{i-1}) + b_{2}.
\end{equation*}
Then by letting $r=1/(1- \eta \epsilon_{2})\geq 1$, which requires $\eta\leq\frac{1}{\varepsilon_{2}}$, we obtain
\begin{equation*} 
\mathbb{E}\left [ r F(X_i) | \mathbb{F}_{i-1}\right] \le  F(X_{i-1}) + rb_{2}.
\end{equation*}
Then we can compute that
\begin{equation} \label{ineq:dft}
\mathbb{E} \left[ r^{i} F(X_i) | \mathbb{F}_{i-1}\right] - r^{i-1}F(X_{i-1})
= r^{i-1} \cdot \left[ \mathbb{E } [ r F(X_i) | \mathbb{F}_{i-1}] - F(X_{i-1}) \right] 
\le   r^i b_{2}.
\end{equation}
Define the stopping time $\tau_{k,K} = \min \{k, \inf\{i: |X_i| \ge K\}\}$, where $K$ is a positive integer, so that
$X_i$ is essentially bounded for $i \le \tau_{k,K}$. 
Applying the discrete Dynkin's formula (see, e.g. Section 4.2 in \cite{Meyn92}), we have
\begin{equation*}
\mathbb{E}_x\left[r^{\tau_{k,K}} F(X_{\tau_{k,K}})\right] = \mathbb{E}_x\left[F(X_0)\right] +
 \mathbb{E } \left[ \sum_{i=1}^{\tau_{k,K}}\left(\mathbb{E} [ r^i F(X_i) | \mathbb{F}_{i-1}] - r^{i-1}F(X_{i-1})\right)
\right].
\end{equation*}
Then it follows from \eqref{ineq:dft} that
\begin{equation*}
\mathbb{E}_x\left[r^{\tau_{k,K}} F(X_{\tau_{k,K}})\right] \leq F(x) +b_{2} \eta \sum_{i=1}^{k}  r^i.
\end{equation*}
As $\tau_{k,K} \rightarrow k$ almost surely as $K \rightarrow \infty$, we infer from Fatou's Lemma that
\begin{equation*}
\mathbb{E}_x\left[r^{k} F(X_{k})\right] \le F(x) +b_{2} \eta \sum_{i=1}^{k}  r^i,
\end{equation*}
which implies that for all $k$,
\begin{equation*}
\mathbb{E}_x\left[F(X_k)\right] \le F(x) + \frac{b_{2} \eta}{r-1} 
= F(x) + \frac{b_{2} (1-\eta_{2} \epsilon_{2})}{\epsilon_{2}} 
\le F(x) + \frac{b_{2}}{\epsilon_{2}},
\end{equation*}
as $r=1/(1- \eta_{2} \epsilon_{2})$. 
Hence we have
\begin{equation*}
\mathbb{E}_x\left[F(X_k)\right] \le F(x) + \frac{b_{2}}{\epsilon_{2}}.
\end{equation*}

It remains to prove \eqref{eq:drift-d}. 
Note that as $\nabla F$ is Lipschitz continuous with constant $M$ so that:
\begin{equation*}
F(y) \le F(x) + \nabla F(x) (y-x) + \frac{M}{2} \Vert y-x\Vert^2.
\end{equation*}
Therefore,
\begin{align*}
\frac{\mathbb{E }_x [ F(X_1) ] - F(x)}{\eta} 
&=\frac{1}{\eta}\left(\mathbb{E }_x \left[ F(x-\eta A_{J}(\nabla F(x))+\sqrt{2\eta\beta^{-1}}\xi_{0}) \right] - F(x)\right)
\\
&\leq
-\nabla F(x)A_{J}\nabla F(x)
+\frac{M}{2\eta}\mathbb{E}_{x}\left[\left\Vert-\eta A_{J}(\nabla F(x))+\sqrt{2\eta\beta^{-1}}\xi_{0}\right\Vert^{2}\right]
\\
&=-\Vert\nabla F(x)\Vert^{2}
+\frac{M}{2}\eta\Vert A_{J}\nabla F(x)\Vert^{2}
+M\beta^{-1}d
\\
&\leq
-\frac{1}{2}\Vert\nabla F(x)\Vert^{2}+M\beta^{-1}d\,,
\end{align*}
provided that $\frac{M}{2}\Vert A_{J}\Vert^{2}\eta\leq\frac{1}{2}$.
Similar to the arguments in \eqref{eqn:similar:1}-\eqref{eqn:similar:5}, we get
\begin{equation*}
\frac{\mathbb{E }_x [ F(X_1) ] - F(x)}{\eta} 
\leq
-\frac{m^{2}}{8}\Vert x\Vert^{2}+M\beta^{-1}d+\frac{mb}{4}.
\end{equation*}
Finally, by Lemma \ref{lem:gradient-bound},
\begin{equation*}
F(x)
\leq\frac{M}{2}\Vert x\Vert^{2}+B\Vert x\Vert+A
\leq\frac{M+B}{2}\Vert x\Vert^{2}+\frac{B}{2}+A.
\end{equation*}
Therefore, we have
\begin{equation*}
\frac{\mathbb{E }_x [ F(X_1) ] - F(x)}{\eta} 
\leq
-\frac{m^{2}}{4(M+B)}F(x)+\frac{m^{2}(\frac{B}{2}+A)}{4(M+B)}+M\beta^{-1}d+\frac{mb}{4},
\end{equation*}
where we assumed that $1-\eta\frac{m^{2}}{4(M+B)}\in[0,1)$.
Hence, the proof is complete.
\end{proof}

\begin{proof}[Proof of Corollary \ref{cor:L2:discrete}]
The proof is similar to the proof of Corollary \ref{cor:L2:continuous}
and is thus omitted.
\end{proof}

\section{Proof of Proposition~\ref{prop:eigenvalue-compare}
and Proposition~\ref{prop:anti-disc-cont}}

\begin{proof}[Proof of Proposition~\ref{prop:eigenvalue-compare}]
Write $u$ as the corresponding eigenvector of $A_{J} \mathbb{L}^{\sigma}$ for the eigenvalue $- \mu_J^{\ast}<0$, so we have
\begin{equation}\label{eq-eigvec-product}
A_{J} \mathbb{L}^{\sigma}  u = - \mu_J^{\ast} u. 
\end{equation} 
Then it follows that
\begin{equation*}
(-\mu_J^{\ast}) u^{\ast}\mathbb{L}^{\sigma} u = u^{\ast} \mathbb{L}^{\sigma} (- \mu_J^{\ast} u) = u^{\ast}\mathbb{L}^{\sigma} A_{J} \mathbb{L}^{\sigma} u=  u^{\ast} (\mathbb{L}^{\sigma})^{T} A_{J}\mathbb{L}^{\sigma}u  = |\mathbb{L}^{\sigma} u|^2 + u^{\ast}(\mathbb{L}^{\sigma})^{T} J \mathbb{L}^{\sigma} u ,
\end{equation*}
where $u^{\ast}$ denotes the conjugate transpose of $u$, $(\mathbb{L}^{\sigma})^{T}$ denotes the transpose of $\mathbb{L}^{\sigma}$, and $(\mathbb{L}^{\sigma})^{T} = \mathbb{L}^{\sigma}$ as $\mathbb{L}^{\sigma}$ is a real symmetric matrix. 
It is easy to see that $u^{\ast}\mathbb{L}^{\sigma} u$ is a real number as $(u^{\ast}\mathbb{L}^{\sigma} u)^{\ast} = u^{\ast}\mathbb{L}^{\sigma}u$. In addition, $u^{\ast}(\mathbb{L}^{\sigma})^{T} J \mathbb{L}^{\sigma} u$ is pure imaginary, since
$(u^{\ast}(\mathbb{L}^{\sigma})^{T} J \mathbb{L}^{\sigma}u)^{\ast} = u^{\ast}(\mathbb{L}^{\sigma})^{T} J^T \mathbb{L}^{\sigma}u =- u^{\ast}(\mathbb{L}^{\sigma})^{T} J\mathbb{L}^{\sigma} u$ by the fact that $J$ is an anti-symmetric real matrix. Hence, we deduce that
\begin{equation*}
u^{\ast}(\mathbb{L}^{\sigma})^{T} J \mathbb{L}^{\sigma} u=0,
\end{equation*}
and it implies that 
\begin{equation} \label{eq:1}
(-\mu_J^{\ast}) u^{\ast}\mathbb{L}^{\sigma} u  = |\mathbb{L}^{\sigma} u|^2.
\end{equation}
Note $u^{\ast}\mathbb{L}^{\sigma}u \ne 0$ as otherwise $0$ 
becomes an eigenvalue of $\mathbb{L}^{\sigma}$ from \eqref{eq:1}, which is a contradiction. 
In fact, we obtain from \eqref{eq:1} that $- u^{\ast}\mathbb{L}^{\sigma} u > 0$ 
as $\mu_J^{\ast}>0$ and $|\mathbb{L}^{\sigma} u|^2>0$.

Since $\mathbb{L}^{\sigma}$ is a real symmetric matrix, we have 
\begin{equation}\label{eqn:S}
\mathbb{L}^{\sigma}=S^{T}DS,    
\end{equation}
for a real orthogonal matrix $S$, where $D= \text{diag} (\mu_1, \mu_2, \ldots, \mu_d)$ 
with $\mu_1< 0 < \mu_2 < \ldots < \mu_d$ being the eigenvalues of $\mathbb{L}^{\sigma}$. 
Then we obtain
\begin{equation} \label{eq:compare}
\mu_J^{\ast} = \frac{|\mathbb{L}^{\sigma} u|^2}{- u^{\ast} \mathbb{L}^{\sigma} u} = \frac{u^{\ast} S^{\ast} D^2 Su}{ -u^{\ast}S^{\ast} D Su} = \frac{\sum_{i=1}^d \mu_i^2 |(Su)_i|^2 }{ \sum_{i=1}^d - \mu_i |(Su)_i|^2},
\end{equation}
where $(Su)_i$ denotes the $i$-th component of the vector $Su$.
Since $\mu_1< 0 < \mu_2 < \ldots < \mu_d$, we then have $(Su)_1 \ne 0$ as otherwise $- u^{\ast}\mathbb{L}^{\sigma} u =  \sum_{i=1}^n - \mu_i |(Su)_i|^2 \le 0$, which is a contradiction. Therefore, we conclude from \eqref{eq:compare} that 
\beq\label{eq-mu-star-bound} \mu_J^{\ast} \ge |\mu_1| = \mu^{\ast}(\sigma).
\eeq
The equality $\mu_J^{\ast} = |\mu_1|= \mu^{\ast}(\sigma)$ is attained if and only if $(Su)_i =0$ for $i=2, \ldots, n$. Or equivalently if and only if the vector $Su = a e_1$ where $a$ is a non-zero constant and $e_1 = [1 ~ 0 ~ \dots~ 0]^T$ is the first basis vector. Since  $S^{-1} = S^T$, this is also equivalent to $u = a v $ where $v = S^Te_1$ is an eigenvector of $\mathbb{L}^\sigma$ corresponding to the eigenvalue $\mu_1$. Since $u$ and $v$ are related up to a constant, this is the same as saying $v$ is an eigenvector of $A_J \mathbb{L}^\sigma$ satisfying \eqref{eq-eigvec-product}. Since $v$ is also an eigenvalue of $\mathbb{L}^\sigma$ and $J$ being anti-symmetric, has only purely imaginary eigenvalues except a zero eigenvalue, this is if and only if $Jv = 0$. In other words, the equality $\mu_J^{\ast} = |\mu_1|= \mu^{\ast}(\sigma)$ is attained if and only if the eigenvector of  $\mathbb{L}^\sigma$ corresponding to the negative eigenvalue $\mu_1$ is an eigenvector of $J$ for the eigenvalue 0. 

We note finally that  Equation~\eqref{eq:exit-compare-cont} then readily follows from \eqref{eq:exit-time} and \eqref{eq-mu-star-bound}.
\end{proof}


\begin{proof}[Proof of Proposition~\ref{prop:anti-disc-cont}]
Write $\tau_{a_1 \rightarrow a_2}^{\beta, n}$ for the first time that the continuous-time dynamics $\{X(t)\}$ starting from $a_1$ to exit the region $D_n$.
Then by monotone convergence theorem, we have
\begin{equation*} 
\lim_{R\rightarrow \infty} \mathbb{E}\left[ \tau_{a_1 \rightarrow a_2}^{\beta, n} \right]  = \mathbb{E}\left[ \tau_{a_1 \rightarrow a_2}^{\beta}\right].
\end{equation*}
Hence, for fixed $\epsilon>0$, one can choose a sufficiently large $n$ such that 
\begin{equation} \label{eq:largeR}
 \left|  \mathbb{E}\left[\tau_{a_1 \rightarrow a_2}^{\beta, n} \right] - \mathbb{E}\left[ \tau_{a_1 \rightarrow a_2}^{\beta}\right] \right| < \epsilon.
\end{equation}
We next control the expected difference between the exit times $\hat \tau_{a_1 \rightarrow a_2}^{\beta, n}$ of the discrete dynamics, and $\tau_{a_1 \rightarrow a_2}^{\beta, n}$ of the continuous dynamics, from the bounded domain $D_n$. For fixed $\epsilon$ and large $n$, we can infer from Theorem 4.2 in \cite{Gobet2005} that\footnote{The Assumption (H2') in Theorem 4.2 of \cite{Gobet2005} can be readily verified in our setting: for both reversible and non-reversible SDE, the drift and diffusion coefficients are clearly Lipschitz; the diffusion matrix is uniformly elliptic; and the domain $D_n$ is bounded and it satisfies the exterior cone condition.}, for sufficiently small stepsize $\eta \le \bar \eta(\epsilon, n, \beta)$,
\begin{equation}\label{eq:disc-con}
\left|  \mathbb{E}\left[\hat \tau_{a_1 \rightarrow a_2}^{\beta, n} \right] - \mathbb{E}\left[\tau_{a_1 \rightarrow a_2}^{\beta, n}\right] \right| < \epsilon.
\end{equation}
Together with \eqref{eq:largeR}, we obtain for $\eta$ sufficiently small,
\begin{equation*}
\left|  \mathbb{E}\left[ \hat \tau_{a_1 \rightarrow a_2}^{\beta, n} \right] - \mathbb{E}\left[\tau_{a_1 \rightarrow a_2}^{\beta}\right] \right| < 2 \epsilon.
\end{equation*}
The proof is therefore complete.
\end{proof}

\section{Recurrence and escape times for underdamped Langevin dynamics with small friction}\label{sec:small}

In this section, we investigate the local analysis results for 
the underdamped Langevin dynamics \eqref{eq:V-iterate}-\eqref{eq:X-iterate}
when the friction coefficient $\gamma$ is small, 
and in particular, we assume that $0<\gamma<2\sqrt{m}$. 


\begin{theorem}\label{MainThm1:small}
Fix $\gamma<2\sqrt{m}$, $\delta\in(0,1)$ and $r>0$. Assume 
\begin{equation*}
0<\varepsilon<\overline{\varepsilon}^{U}
:=\min\left\{\frac{\sqrt{m}(1-\hat{\varepsilon})}{4C_{\hat{\varepsilon}}L(1+\frac{1}{64C_{\hat{\varepsilon}}^{2}})},8rC_{\hat{\varepsilon}}\right\}
=\min\left\{\mathcal{O}\left(m\right),\mathcal{O}\left(\frac{r}{\sqrt{m}}\right)\right\},
\end{equation*}
where $\hat{\varepsilon}$ and $C_{\hat{\varepsilon}}$ are defined in Lemma \ref{lem:keylemma}, and
we assumed that $\hat{\varepsilon}=\Omega(1)$ and thus $C_{\hat{\varepsilon}}$ is of order $m^{-1/2}$.
Define the recurrence time
\begin{equation}\label{rec:time:U:small}
\mathcal{T}_{\text{rec}}^{U}:=\frac{2}{\sqrt{m}(1-\hat{\varepsilon})}\log\left(\frac{8rC_{\hat{\varepsilon}}}{\varepsilon}\right)
=\mathcal{O}\left(\frac{1}{\sqrt{m}}\log\left(\frac{r}{\varepsilon m}\right)\right),
\end{equation}
and we also define the escape time
\begin{equation*}
\mathcal{T}_{\text{esc}}^{U}:=\mathcal{T}_{\text{rec}}^{U}+\mathcal{T},
\end{equation*}
for any arbitrary $\mathcal{T}>0$. 

Consider an arbitrary initial point $x$ for the underdamped Langevin dynamics and a local minimum $x_{\ast}$ at a distance at most $r$. 
Assume that the stepsize $\eta$ satisfies
\begin{equation*}
\eta\leq
\overline{\eta}^{U}
=
\min\left\{\mathcal{O}(\varepsilon),
\mathcal{O}\left(\frac{m^{2}\beta\delta\varepsilon^{2}}
{(md+\beta)\mathcal{T}_{\text{rec}}^{U}}\right),
\mathcal{O}\left(\frac{m^{5/4}\delta}{(d+\beta)^{1/2}(\mathcal{T}_{\text{esc}}^{U})^{1/2}}\right),
\mathcal{O}(m^{5/2})\right\}\,,
\end{equation*}
where 
$\beta$ satisfies
\begin{equation*}
\beta\geq
\underline{\beta}^{U}=
\max\left\{\Omega\left(\frac{(d+\log((\mathcal{T}+1)/\delta))C_{\hat{\varepsilon}}^{2}}{\sqrt{m}(1-\hat{\varepsilon})\varepsilon^{2}}\right),
\Omega\left(\frac{d\eta m^{1/2}\log(\delta^{-1}\mathcal{T}_{\text{rec}}^{U}/\eta)}{\varepsilon^{2}}\right)\right\},
\end{equation*}  
for any realization of $Z$, with probability at least $1-\delta$
w.r.t. the Gaussian noise, at least one of the following events will occur:
(1) $\Vert X_{k}-x_{\ast}\Vert\geq\frac{1}{2}\left(\varepsilon+re^{-\sqrt{m}(1-\hat{\varepsilon}) k\eta}\right)$
for some $k\leq\eta^{-1}\mathcal{T}_{\text{rec}}^{U}$;
(2) $\Vert X_{k}-x_{\ast}\Vert\leq\varepsilon+re^{-\sqrt{m}(1-\hat{\varepsilon}) k\eta}$
for every $\eta^{-1}\mathcal{T}_{\text{rec}}^{U}\leq k\leq\eta^{-1}\mathcal{T}_{\text{esc}}^{U}$.
\end{theorem}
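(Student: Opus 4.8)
\textbf{Proof plan for Theorem~\ref{MainThm1:small}.}
The plan is to mirror the proof of Theorem~\ref{MainThm1} almost line by line, tracking how the change $\gamma = 2\sqrt{m}$ to $\gamma < 2\sqrt{m}$ propagates through the estimates. The key structural fact is that part~(i) of Lemma~\ref{lem:keylemma} gives $\|e^{-tH_\gamma}\| \leq C_{\hat\varepsilon}\, e^{-\sqrt{m}(1-\hat\varepsilon)t}$ with a genuine constant $C_{\hat\varepsilon}$ (no polynomial-in-$t$ prefactor), which is actually cleaner to work with than the $\sqrt{C_H + 2 + (m+1)^2 t^2}$ factor in the critically-damped case. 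So the first step is to restate and reprove the analogue of Proposition~\ref{prop:key}: define the stopping time $\tau := \inf\{t\geq 0 : \|X(t) - x_\ast\| \geq \varepsilon + re^{-\sqrt{m}(1-\hat\varepsilon)t}\}$, decompose $Z_t = Z_t^0 + Z_t^1$ exactly as in Section~\ref{proof:two:results}, and bound the two pieces separately. For $Z_t^1$ (the quadratic remainder term from Hessian Lipschitzness), I would redo the chain of integral inequalities using the exponential-only bound: $\|Q_{t_0}(t_1)Z_t^1\| \leq \frac{C_{\hat\varepsilon}L}{2}\int_0^t e^{(s-t_1)\sqrt{m}(1-\hat\varepsilon)}(\varepsilon + re^{-\sqrt{m}(1-\hat\varepsilon)s})^2 ds$, which integrates to something $\lesssim \frac{C_{\hat\varepsilon}L}{\sqrt{m}(1-\hat\varepsilon)}\varepsilon^2$ after using $t_1 e^{-\sqrt{m}(1-\hat\varepsilon)t_1} \leq \mathcal{T}_{\text{rec}}^U e^{-\sqrt{m}(1-\hat\varepsilon)\mathcal{T}_{\text{rec}}^U}$ and the defining relation $C_{\hat\varepsilon}\mathcal{T}_{\text{rec}}^U e^{-\sqrt{m}(1-\hat\varepsilon)\mathcal{T}_{\text{rec}}^U} \asymp \varepsilon/r$ implicit in \eqref{rec:time:U:small}; this forces the constraint $\varepsilon < \frac{\sqrt{m}(1-\hat\varepsilon)}{4C_{\hat\varepsilon}L(1 + \tfrac{1}{64C_{\hat\varepsilon}^2})}$ so that the constant $c_1$ absorbing this term stays below $1/4$ and hence $c_0 > 1/4$.

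For the Gaussian part $Z_t^0$, I would prove the analogue of Lemma~\ref{lem:ineq} via Doob's inequality plus the Gaussian moment generating function, exactly as before; the only input that changes is the bound on $\|\Sigma_{t_1}\|$. Using part~(i) of Lemma~\ref{lem:keylemma}, $\|\Sigma_{t_1}\| \leq 2\gamma\beta^{-1}\int_0^\infty C_{\hat\varepsilon}^2 e^{-2\sqrt{m}(1-\hat\varepsilon)t}dt = \frac{\gamma C_{\hat\varepsilon}^2}{\sqrt{m}(1-\hat\varepsilon)}\beta^{-1}$, so the eigenvalues of $I - \beta\theta\Sigma_{t_1}$ are bounded below by $1 - \theta\gamma C_{\hat\varepsilon}^2/(\sqrt{m}(1-\hat\varepsilon))$ and the prefactor in the tail bound is $(1 - \theta\gamma C_{\hat\varepsilon}^2/(\sqrt{m}(1-\hat\varepsilon)))^{-d}$; choosing $\theta = \sqrt{m}(1-\hat\varepsilon)/(2\gamma C_{\hat\varepsilon}^2)$ makes this $2^d$. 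The mean term $\langle\mu_{t_1},(I-\beta\theta\Sigma_{t_1})^{-1}\mu_{t_1}\rangle$ is controlled by $\|e^{-t_1 H_\gamma}\|^2 r^2 \leq C_{\hat\varepsilon}^2 e^{-2\sqrt{m}(1-\hat\varepsilon)t_1}r^2$, which via the definition of $\mathcal{T}_{\text{rec}}^U$ and the constraint $\varepsilon < 8rC_{\hat\varepsilon}$ is $\lesssim \varepsilon^2$; putting $h = (\varepsilon + re^{-\sqrt{m}(1-\hat\varepsilon)t_1})c_0 \geq \varepsilon c_0$, summing over a partition of $[\mathcal{T}_{\text{rec}}^U,\mathcal{T}_{\text{esc}}^U]$ into $\lceil \|A_J H\|\,\mathcal{T}\rceil$ — here $\lceil\|H_\gamma\|\mathcal{T}\rceil$, bounded using $\|H_\gamma\| \leq \sqrt{\gamma^2 + M^2 + 1}$ — subintervals of length $1/(2\|H_\gamma\|)$, and requiring $\beta \geq \underline\beta_3^U$ yields $\mathbb{P}_x(\tau \in [\mathcal{T}_{\text{rec}}^U,\mathcal{T}_{\text{esc}}^U]) \leq \delta$.

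The second half of the proof — transferring from the continuous-time diffusion to the discrete ULD iterates — is genuinely identical to Section~\ref{sec:complete:proof}: the relative-entropy/Pinsker bound from \cite{GGZ} together with the uniform $L^2$ bounds of Lemma~\ref{lem:L2bound} (which already hold for arbitrary $\gamma > 0$, not just $\gamma = 2\sqrt{m}$), the optimal-coupling argument of \cite{Lindvall}, the Gronwall estimate on $\sup_{t\in[t_k,t_{k+1}]}(\|X(t) - X(t_{k+1})\| + \|V(t) - V(t_{k+1})\|)$, and the four tail bounds controlling $\mathbb{P}(\mathcal{B}^c)$ via the $L^2$ bounds $C_x^c, C_v^c$ and the Brownian increment estimate of Lemma~\ref{lem:Brownian}. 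These impose $\eta \leq \min\{\overline\eta_1^U,\overline\eta_2^U,\overline\eta_3^U,\overline\eta_4^U\}$ and $\beta \geq \underline\beta_2^U$, which combined with $\underline\beta_3^U$ gives $\underline\beta^U$; the final step is just reading off the leading-order dependencies on $m$, $d$, $\beta$, $\varepsilon$ under the convention $M, L, C_H = \mathcal{O}(1)$ and $\hat\varepsilon = \Omega(1)$ (so $C_{\hat\varepsilon} = \Theta(m^{-1/2})$ and $1 - \hat\varepsilon = \Theta(1)$), giving $\mathcal{T}_{\text{rec}}^U = \mathcal{O}(m^{-1/2}\log(r/(\varepsilon m)))$ and the stated forms of $\overline\varepsilon^U, \overline\eta^U, \underline\beta^U$. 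The main obstacle — though it is more bookkeeping than conceptual — is keeping the $C_{\hat\varepsilon}$-dependence consistent throughout: because $C_{\hat\varepsilon} \to \infty$ as $\hat\varepsilon \to 0$ (i.e.\ $\gamma \to 2\sqrt{m}$), one must verify that the constraints on $\varepsilon$ and $\beta$ degrade gracefully and reduce, up to the $|\log m|$ vs.\ constant discrepancy in $\mathcal{T}_{\text{rec}}^U$, to the critically-damped bounds of Theorem~\ref{MainThm1}; under the standing assumption $\hat\varepsilon = \Omega(1)$ this is automatic, but it is worth flagging that the theorem is only non-vacuous in that regime.
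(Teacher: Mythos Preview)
Your proposal is correct and follows essentially the same route as the paper: the paper states the small-friction analogue of Proposition~\ref{prop:key} (Proposition~\ref{prop:key:small}) and of Lemma~\ref{lem:ineq} (Lemma~\ref{lem:ineq:small}) using part~(i) of Lemma~\ref{lem:keylemma}, then defers the discrete-to-continuous transfer to Section~\ref{sec:complete:proof} verbatim. One small correction: in this regime $\mathcal{T}_{\text{rec}}^U$ is defined by the explicit logarithm \eqref{rec:time:U:small}, so the relevant relation is simply $r^2 e^{-\sqrt{m}(1-\hat\varepsilon)\mathcal{T}_{\text{rec}}^U} = \varepsilon^2/(64C_{\hat\varepsilon}^2)$ with no $t_1$-prefactor or Lambert-$W$ structure --- the $t_1 e^{-\sqrt{m}(1-\hat\varepsilon)t_1}$ monotonicity you invoke is a carryover from the critically-damped case and is not needed here.
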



\begin{remark}
Notice that in Theorem \ref{MainThm1:small}, the definition of $\eta$ and $\beta$ are coupled
since $\overline{\eta}^{U}$ depends on $\beta$ and $\underline{\beta}^{U}$ depends on $\eta$.
A closer look reveals that when $\eta$ is sufficiently small, 
the first term in the definition of $\underline{\beta}^{U}$ dominates the second term
and $\underline{\beta}^{U}$ is independent of $\eta$. So to satisfy
the constraints in Theorem \ref{MainThm1:small}, it suffices to first choose
$\beta$ to be larger than the first term in $\underline{\beta}^{U}$ and then choose
$\eta$ to be sufficiently small.
\end{remark}



\begin{remark} 
In \cite{pmlr-v75-tzen18a}, the overdamped Langevin algorithm is used
and the recurrence time 
$\mathcal{T}_{\text{rec}}=\mathcal{O}\left(\frac{1}{m}\log(\frac{r}{\varepsilon})\right)$,
and thus our recurrence time 
$\mathcal{T}_{\text{rec}}^{U}=\mathcal{O}\left(\frac{1}{\sqrt{m}}\log\left(\frac{r}{\varepsilon m}\right)\right)$
for the underdamped Langevin algorithm with the choice of $\gamma<2\sqrt{m}$, 
which has a square root factor improvement ignoring the logarithmic factor. This recurrence time
is worse than $\mathcal{T}_{\text{rec}}^{U}=\mathcal{O}\left(\frac{1}{\sqrt{m}}\log\left(\frac{r}{\varepsilon }\right)\right)$
for the underdamped Langevin algorithm with the choice of $\gamma=2\sqrt{m}$
by a logarithmic factor assuming $C_{H}=\mathcal{O}(1)$.
\end{remark} 


\begin{remark}
Let us compare the case $\gamma=2\sqrt{m}$ with the case $\gamma<2\sqrt{m}$ (to be discussed in Section \ref{sec:small}).
When $\gamma=2\sqrt{m}$, 
since $W_{-1}(-x)\sim\log(1/x)$ for $x\rightarrow 0^{+}$, assuming $r,\varepsilon,C_{H}=\mathcal{O}(1)$,
we have
\begin{equation*}
\mathcal{T}_{\text{rec}}^{U}\sim
\frac{1}{2\sqrt{m}}\log(1/m),
\qquad\text{as $m\rightarrow 0^{+}$}.
\end{equation*}
When $\gamma<2\sqrt{m}$, we have
$\mathcal{T}_{\text{rec}}^{U}=\frac{2}{\sqrt{m}(1-\hat{\varepsilon})}\log(8rC_{\hat{\varepsilon}}/\varepsilon)$,
where $C_{\hat{\varepsilon}}=\frac{1+M}{\sqrt{m(1-(1-\hat{\varepsilon})^{2})}}$,
and $\hat{\varepsilon}=1-\frac{\gamma}{2\sqrt{m}} \in (0,1)$.
For example, if $\gamma=m^{\chi}$,
where $\chi\in(0,1/2)$ and $m\rightarrow 0^{+}$, then
$\mathcal{T}_{\text{rec}}^{U}\sim\frac{2}{m^{\chi}}\log(m)$,
as $m\rightarrow 0^{+}$,
and if $\gamma=\gamma_{0}\sqrt{m}$, where $\gamma_{0}\in(0,2)$, then
$\mathcal{T}_{\text{rec}}^{U}\sim\frac{2}{\gamma_{0}\sqrt{m}}\log(m)$,
as $m\rightarrow 0^{+}$.
To summarize, with all the parameters fixed,
as $m\rightarrow 0^{+}$, the choice $\gamma=2\sqrt{m}$
is more optimal than the choice $\gamma<2\sqrt{m}$.
On the other hand,
when the second smallest eigenvalue is close to the smallest eigenvalue $m$, 
such that $C_{H}=\max_{i:\lambda_{i}>m}\frac{(1+\lambda_{i})^{2}}{\lambda_{i}-m}$ is large, 
it is more desirable to use the underdamped Langevin
algorithm with $\gamma<2\sqrt{m}$ instead.
\end{remark}

\subsection{Proof of Theorem \ref{MainThm1:small}}

The proof of Theorem \ref{MainThm1:small} is similar
to the proof of Theorem \ref{MainThm1} and the following proposition, 
and the similar arguments in Section \ref{sec:complete:proof}.

\begin{proposition}\label{prop:key:small}
Assume $\gamma<2\sqrt{m}$.
Fix any $r>0$ and 
\begin{equation*}
\varepsilon<
\min\left\{\frac{\sqrt{m}(1-\hat{\varepsilon})}{4C_{\hat{\varepsilon}}L(1+\frac{1}{64C_{\hat{\varepsilon}}^{2}})},8rC_{\hat{\varepsilon}}\right\}.
\end{equation*}
Consider the stopping time:
\begin{equation*}
\tau:=\inf\left\{t\geq 0:\Vert X(t)-x_{\ast}\Vert\geq\varepsilon+re^{-\sqrt{m}(1-\hat{\varepsilon}) t}\right\}.
\end{equation*}
For any initial point $X(0)=x$ with $\Vert x-x_{\ast}\Vert \le r$, and 
\begin{equation*}
\beta\geq\frac{128\gamma C_{\hat{\varepsilon}}^{2}}{\sqrt{m}(1-\hat{\varepsilon})\varepsilon^{2}}\left(d\log(2)+\log\left(\frac{2\Vert H_{\gamma}\Vert \mathcal{T}+1}{\delta}\right)\right),
\end{equation*}
we have
\begin{equation*}
\mathbb{P}_{x}\left(\tau\in\left[\mathcal{T}_{\text{rec}}^{U},\mathcal{T}_{\text{esc}}^{U}\right]\right)\leq\delta.
\end{equation*}
\end{proposition}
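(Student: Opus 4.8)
The plan is to mirror the proof of Proposition~\ref{prop:key} (the $\gamma = 2\sqrt{m}$ case), replacing the key decay estimate $\|e^{-tH_\gamma}\| \le \sqrt{C_H + 2 + (m+1)^2 t^2}\, e^{-\sqrt{m}t}$ by its analogue for $\gamma < 2\sqrt{m}$, namely part (i) of Lemma~\ref{lem:keylemma}, $\|e^{-tH_\gamma}\| \le C_{\hat\varepsilon} e^{-\sqrt{m}(1-\hat\varepsilon)t}$. Since this bound has no polynomial-in-$t$ prefactor, the resulting recurrence time is determined by a genuinely exponential equation rather than a Lambert-$W$ one, which is why $\mathcal{T}_{\text{rec}}^U = \frac{2}{\sqrt{m}(1-\hat\varepsilon)}\log(8rC_{\hat\varepsilon}/\varepsilon)$ has a clean closed form. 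First I would set $X(t) = Y(t) + x_\ast$, write $\nabla F(X(t)) = HY(t) - \rho(Y(t))$ with $\|\rho(Y(t))\| \le \tfrac{L}{2}\|Y(t)\|^2$, and cast the underdamped diffusion in the matrix form $d[V,Y]^T = -H_\gamma[V,Y]^T dt + \sqrt{2\gamma\beta^{-1}}I^{(2)}dB^{(2)} + [\rho,0]^T dt$, exactly as in the proof of Proposition~\ref{prop:key}. Then, for fixed $\mathcal{T}_{\text{rec}}^U \le t_0 \le t_1$ with $t_1 - t_0 \le \tfrac{1}{2\|H_\gamma\|}$, introduce the matrix flow $Q_{t_0}(t) = e^{(t_0-t)H_\gamma}$ and the decomposition $Z_t = Z_t^0 + Z_t^1$ as in \eqref{eqn:Z0}--\eqref{eqn:Z1}, and bound $\mathbb{P}(\tau \in [t_0,t_1])$ by splitting into the martingale term $Q_{t_0}(t_1)Z_t^0$ and the drift-correction term $Q_{t_0}(t_1)Z_t^1$ via $c_0 + c_1 = \tfrac12$.

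For the $Z^1$ term, on the event $\tau \in [t_0,t_1]$ one has $\|\rho(Y(s))\| \le \tfrac{L}{2}(\varepsilon + re^{-\sqrt{m}(1-\hat\varepsilon)s})^2$, so using Lemma~\ref{lem:keylemma}(i),
\begin{equation*}
\|Q_{t_0}(t_1)Z_t^1\| \le \frac{L C_{\hat\varepsilon}}{2}\int_0^{t_1} e^{(s-t_1)\sqrt{m}(1-\hat\varepsilon)}\bigl(\varepsilon + re^{-\sqrt{m}(1-\hat\varepsilon)s}\bigr)^2 ds.
\end{equation*}
Expanding the square and integrating, the $\varepsilon^2$ term contributes $O(\varepsilon^2 C_{\hat\varepsilon}/(\sqrt{m}(1-\hat\varepsilon)))$, the cross term contributes $O(\varepsilon r C_{\hat\varepsilon} t_1 e^{-\sqrt{m}(1-\hat\varepsilon)t_1})$, and the $r^2$ term contributes $O(r^2 C_{\hat\varepsilon} e^{-\sqrt{m}(1-\hat\varepsilon)t_1})$; since $t_1 \ge \mathcal{T}_{\text{rec}}^U$ and $re^{-\sqrt{m}(1-\hat\varepsilon)\mathcal{T}_{\text{rec}}^U} = \varepsilon/(8C_{\hat\varepsilon})$ by the definition of $\mathcal{T}_{\text{rec}}^U$, every term is $O(\varepsilon^2 C_{\hat\varepsilon}/(\sqrt{m}(1-\hat\varepsilon)))$, and the smallness assumption $\varepsilon < \frac{\sqrt{m}(1-\hat\varepsilon)}{4C_{\hat\varepsilon}L(1 + \frac{1}{64C_{\hat\varepsilon}^2})}$ forces this to be $\le c_1\varepsilon$ for a suitable $c_1 < \tfrac14$, so that the $Z^1$ contribution to $\mathbb{P}(\tau \in [t_0,t_1])$ vanishes and $c_0 > \tfrac14$. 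For the $Z^0$ term I would prove and invoke the analogue of Lemma~\ref{lem:ineq}: $Q_{t_0}(t_1)Z_{t_1}^0$ is Gaussian with mean $\mu_{t_1} = e^{-t_1 H_\gamma}(V(0),Y(0))^T$ and covariance $\Sigma_{t_1}$, and using $\|e^{-tH_\gamma}\| \le C_{\hat\varepsilon}e^{-\sqrt{m}(1-\hat\varepsilon)t}$ one gets $\|\Sigma_{t_1}\| \le \gamma\beta^{-1}C_{\hat\varepsilon}^2/(\sqrt{m}(1-\hat\varepsilon))$; Doob's maximal inequality plus the Gaussian moment-generating function then yields, for $\theta \in (0, \sqrt{m}(1-\hat\varepsilon)/(\gamma C_{\hat\varepsilon}^2))$,
\begin{equation*}
\mathbb{P}\Bigl(\sup_{t_0\le t\le t_1}\|Q_{t_0}(t_1)Z_t^0\| \ge h\Bigr) \le \Bigl(1 - \theta\tfrac{\gamma C_{\hat\varepsilon}^2}{\sqrt{m}(1-\hat\varepsilon)}\Bigr)^{-d} e^{-\frac{\beta\theta}{2}[h^2 - \langle\mu_{t_1},(I-\beta\theta\Sigma_{t_1})^{-1}\mu_{t_1}\rangle]}.
\end{equation*}

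Choosing $\theta = \sqrt{m}(1-\hat\varepsilon)/(2\gamma C_{\hat\varepsilon}^2)$ makes the prefactor $2^d$; then with $h = (\varepsilon + re^{-\sqrt{m}(1-\hat\varepsilon)t_1})c_0 \ge \varepsilon c_0 \ge \varepsilon/4$ and $\|\mu_{t_1}\|^2 \le C_{\hat\varepsilon}^2 e^{-2\sqrt{m}(1-\hat\varepsilon)t_1}r^2$, using $t_1 \ge \mathcal{T}_{\text{rec}}^U$ and $re^{-\sqrt{m}(1-\hat\varepsilon)\mathcal{T}_{\text{rec}}^U} = \varepsilon/(8C_{\hat\varepsilon})$ gives $\langle\mu_{t_1},(I-\beta\theta\Sigma_{t_1})^{-1}\mu_{t_1}\rangle \le 2C_{\hat\varepsilon}^2 e^{-2\sqrt{m}(1-\hat\varepsilon)t_1}r^2 \le \varepsilon^2/32$ (here the requirement $\varepsilon < 8rC_{\hat\varepsilon}$, i.e. $\mathcal{T}_{\text{rec}}^U > 0$, is used). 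Hence $h^2 - \langle\mu_{t_1},\dots\rangle \ge (c_0^2 - \tfrac{1}{32})\varepsilon^2 \ge \varepsilon^2/128$, so $\mathbb{P}(\tau \in [t_0,t_1]) \le 2^d \exp(-\beta\sqrt{m}(1-\hat\varepsilon)\varepsilon^2/(256\gamma C_{\hat\varepsilon}^2))$. Finally, partitioning $[\mathcal{T}_{\text{rec}}^U, \mathcal{T}_{\text{esc}}^U]$ into $\lceil 2\|H_\gamma\|\mathcal{T}\rceil + 1$ subintervals of length $\le \tfrac{1}{2\|H_\gamma\|}$ and union-bounding, $\mathbb{P}(\tau \in [\mathcal{T}_{\text{rec}}^U,\mathcal{T}_{\text{esc}}^U]) \le (2\|H_\gamma\|\mathcal{T}+1)2^d \exp(-\beta\sqrt{m}(1-\hat\varepsilon)\varepsilon^2/(256\gamma C_{\hat\varepsilon}^2)) \le \delta$ under the stated lower bound on $\beta$ (using $\|H_\gamma\| \le \sqrt{\gamma^2 + M^2 + 1}$ as in Lemma~\ref{lem:H:gamma}, with $\gamma$ replacing $2\sqrt{m}$). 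The passage from this diffusion-level statement to the discrete-time Theorem~\ref{MainThm1:small} is then verbatim the argument of Section~\ref{sec:complete:proof}: relative-entropy/Pinsker comparison of the interpolated discrete process with the diffusion using the uniform $L^2$ bounds of Lemma~\ref{lem:L2bound}, optimal coupling, a Gronwall-type modulus-of-continuity estimate to control $\|X(t) - X(t_{k+1})\|$ on each subinterval via event $\mathcal{B}$, and the same tail bounds $\mathbb{P}(\|V(t_{k+1})\| \ge u)$, $\mathbb{P}(\|X(t_{k+1})\| \ge u)$, Lemma~\ref{lem:Brownian}; I would just note that the constants in Table~\ref{table_constants_1} are reused with $\gamma < 2\sqrt{m}$ and $C_{\hat\varepsilon}$ in place of the $\sqrt{C_H + 2 + (m+1)^2}$-type factors. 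The main obstacle is bookkeeping rather than conceptual: one must verify that with $\hat\varepsilon = \Omega(1)$ (so $C_{\hat\varepsilon} = \Theta(m^{-1/2})$) all the $\varepsilon$-smallness, $\eta$-smallness and $\beta$-largeness thresholds collapse to the orders claimed in the theorem statement — in particular that $\mathcal{T}_{\text{rec}}^U = O(m^{-1/2}\log(r/(\varepsilon m)))$ — and that the $Z^1$ estimate's dependence on $C_{\hat\varepsilon}$ is genuinely absorbed by the factor $1 + \frac{1}{64C_{\hat\varepsilon}^2}$ in $\overline\varepsilon^U$.
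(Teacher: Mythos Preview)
Your proposal is correct and follows essentially the same route as the paper: the same $Z_t = Z_t^0 + Z_t^1$ decomposition, the same use of Lemma~\ref{lem:keylemma}(i) for the $Z^1$ term, the same Doob/Gaussian-MGF lemma (the paper states it as Lemma~\ref{lem:ineq:small}) with the same choice $\theta = \sqrt{m}(1-\hat\varepsilon)/(2\gamma C_{\hat\varepsilon}^2)$, and the same partition-and-union-bound finish. Two small arithmetic points to tidy up: (i) with the stated $\mathcal{T}_{\text{rec}}^U$ one actually has $re^{-\sqrt{m}(1-\hat\varepsilon)\mathcal{T}_{\text{rec}}^U} = \varepsilon^2/(64rC_{\hat\varepsilon}^2)$ (your identity holds with $\tfrac12\mathcal{T}_{\text{rec}}^U$ in the exponent), which only strengthens your bounds; (ii) since $c_0 > \tfrac14$ gives $c_0^2 - \tfrac{1}{32} \ge \tfrac{1}{32}$, the exponent denominator is $128$ rather than $256$, matching the $\beta$-threshold in the statement.
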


The term $\Vert H_{\gamma}\Vert$ in Proposiiton \ref{prop:key:small}
can be bounded using Lemma \ref{lem:H:gamma}.
Based on Proposition \ref{prop:key:small}, the proof of 
Theorem \ref{MainThm1:small} is similar to the proof of Theorem \ref{MainThm1}.
So in the rest of the section, we will only focus
on the proof of Proposition \ref{prop:key:small}.

\subsubsection{Proof of Proposition~\ref{prop:key:small}}\label{proof:two:results:small}

In this section, we focus on the proof of Proposition \ref{prop:key:small}.
We recall some definitions from Section \ref{proof:two:results}.
We recall the matrices $H_{\gamma}$ and $I^{(2)}$ from \eqref{eqn:H:I},
the matrix flow
$Q_{t_{0}}(t)$
from \eqref{eqn:matrix:flow} 
and the processes $Z_{t}^{0}$ and $Z_{t}^{1}$ from \eqref{eqn:Z0}-\eqref{eqn:Z1},
and also $\mu_{t}$ and $\Sigma_{t}$ from \eqref{eqn:mu:t}-\eqref{eqn:Sigma:t}.

\begin{lemma}\label{lem:ineq:small}
Assume $\gamma<2\sqrt{m}$. 
For any $\theta \in \left(0,\frac{\sqrt{m}(1-\hat{\varepsilon})}{\gamma C_{\hat{\varepsilon}}^{2}}\right)$,
and $h>0$ and any $(V(0),Y(0))$,
\begin{equation*}
\mathbb{P}\left(\sup_{t_{0}\leq t\leq t_{1}}\Vert Q_{t_{0}}(t_{1})Z_{t}^{0}\Vert\geq h\right)
\leq \left(1-\frac{\theta\gamma C_{\hat{\varepsilon}}^{2}}{\sqrt{m}(1-\hat{\varepsilon})}\right)^{-d} 
e^{-\frac{\beta\theta}{2}[h^{2}-\langle\mu_{t_{1}},(I-\beta\theta\Sigma_{t_{1}})^{-1}\mu_{t_{1}}\rangle]}.
\end{equation*}
\end{lemma}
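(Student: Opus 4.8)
The plan is to mirror exactly the proof of Lemma \ref{lem:ineq} (the $\gamma = 2\sqrt{m}$ case), replacing the bound from Lemma \ref{lem:keylemma}(ii) with the bound from Lemma \ref{lem:keylemma}(i). First I would recall that $Q_{t_0}(t_1)Z_t^0 = e^{-t_1 H_\gamma}(V(0),Y(0))^T + \sqrt{2\gamma\beta^{-1}}\int_0^t e^{(s-t_1)H_\gamma} I^{(2)} dB_s^{(2)}$ is a $2d$-dimensional martingale, and apply Doob's maximal inequality to the submartingale $e^{(\beta\theta/2)\|Q_{t_0}(t_1)Z_t^0\|^2}$, exactly as in \eqref{eq:est1}:
\begin{equation*}
\mathbb{P}\left(\sup_{t_0\le t\le t_1}\|Q_{t_0}(t_1)Z_t^0\|\ge h\right)
\le e^{-\beta\theta h^2/2}\,\frac{1}{\sqrt{\det(I-\beta\theta\Sigma_{t_1})}}\,e^{\frac{\beta\theta}{2}\langle\mu_{t_1},(I-\beta\theta\Sigma_{t_1})^{-1}\mu_{t_1}\rangle}.
\end{equation*}
This uses that $Q_{t_0}(t_1)Z_{t_1}^0$ is Gaussian with mean $\mu_{t_1}$ and covariance $\Sigma_{t_1}$ defined in \eqref{eqn:mu:t}--\eqref{eqn:Sigma:t}, together with the standard Gaussian moment-generating-function identity, which is valid as long as $I - \beta\theta\Sigma_{t_1} \succ 0$.

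Next I would bound $\|\Sigma_{t_1}\|$. By Lemma \ref{lem:keylemma}(i), since $\gamma < 2\sqrt{m}$ we have $\|e^{-sH_\gamma}\| \le C_{\hat\varepsilon} e^{-\sqrt{m}(1-\hat\varepsilon)s}$ for all $s\ge 0$, hence
\begin{equation*}
\|\Sigma_{t_1}\| \le 2\gamma\beta^{-1}\int_0^{t_1}\|e^{-sH_\gamma}\|^2\,ds \le 2\gamma\beta^{-1}C_{\hat\varepsilon}^2\int_0^\infty e^{-2\sqrt{m}(1-\hat\varepsilon)s}\,ds = \frac{\gamma C_{\hat\varepsilon}^2}{\beta\sqrt{m}(1-\hat\varepsilon)}.
\end{equation*}
Therefore the eigenvalues of $I - \beta\theta\Sigma_{t_1}$ are bounded below by $1 - \theta\gamma C_{\hat\varepsilon}^2/(\sqrt{m}(1-\hat\varepsilon))$, which is strictly positive precisely under the hypothesis $\theta \in (0, \sqrt{m}(1-\hat\varepsilon)/(\gamma C_{\hat\varepsilon}^2))$. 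Consequently $\det(I-\beta\theta\Sigma_{t_1})^{-1/2} \le (1 - \theta\gamma C_{\hat\varepsilon}^2/(\sqrt{m}(1-\hat\varepsilon)))^{-d}$ (there are $2d$ eigenvalues, each at least this lower bound, so the determinant bound carries an exponent $-2d/2 = -d$). Substituting this into the Doob inequality display yields the claimed estimate.

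I do not expect any genuine obstacle here; the lemma is a routine transcription of Lemma \ref{lem:ineq} with the exponential decay rate $\sqrt{m}$ (and the constant $\sqrt{C_H + 2 + (m+1)^2 t^2}$) replaced by the rate $\sqrt{m}(1-\hat\varepsilon)$ (and the constant $C_{\hat\varepsilon}$). The only minor point to get right is the bookkeeping of the exponent in the determinant bound — the matrix $H_\gamma$ is $2d\times 2d$, so there are $2d$ eigenvalues and the $-1/2$ power of the determinant gives $(\cdot)^{-d}$, matching the statement — and ensuring that the constraint on $\theta$ is exactly what guarantees $I - \beta\theta\Sigma_{t_1}$ is positive definite so that the Gaussian MGF is finite. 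If one wanted to be slightly more careful about the integral $\int_0^{t_1}$ versus $\int_0^\infty$, monotonicity of the integrand makes the extension to $\infty$ harmless.
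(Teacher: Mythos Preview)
Your proposal is correct and follows essentially the same approach as the paper: apply Doob's inequality exactly as in \eqref{eq:est1}, then bound $\|\Sigma_{t_1}\|$ via Lemma~\ref{lem:keylemma}(i) to get $\|\Sigma_{t_1}\|\le \gamma\beta^{-1}C_{\hat\varepsilon}^2/(\sqrt{m}(1-\hat\varepsilon))$, and conclude from the eigenvalue lower bound on $I-\beta\theta\Sigma_{t_1}$. The paper's own proof is a one-paragraph transcription of Lemma~\ref{lem:ineq} along exactly these lines.
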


\begin{proof}[Proof of Lemma~\ref{lem:ineq:small}]
The proof is similar to the proof of Lemma \ref{lem:ineq}.
Let us recall from Lemma \ref{lem:keylemma}
that if $\gamma<2\sqrt{m}$, then
\begin{equation*}
\left\| e^{-t H_\gamma}\right\| \leq C_{\hat{\varepsilon}} e^{-\sqrt{m}(1-\hat{\varepsilon})t}\,,
\end{equation*} 
where $C_{\hat{\varepsilon}}$ and $\hat{\varepsilon}$ are defined in Lemma \ref{lem:keylemma}.
Therefore, we have
\begin{equation*}
\Vert\Sigma_{t_{1}}\Vert
\leq 2\gamma\beta^{-1}\int_{0}^{t_{1}}C_{\hat{\varepsilon}}^{2}e^{-2\sqrt{m}(1-\hat{\varepsilon})t}dt
\leq\frac{\gamma\beta^{-1}C_{\hat{\varepsilon}}^{2}}{\sqrt{m}(1-\hat{\varepsilon})}.
\end{equation*}
Therefore we infer that 
the eigenvalues of $I-\beta\theta\Sigma$ are bounded below by 
$1-\theta\frac{\gamma C_{\hat{\varepsilon}}^{2}}{\sqrt{m}(1-\hat{\varepsilon})}$. 
The conclusion then follows from \eqref{eq:est1}.
\end{proof}

\begin{proof}[Proof of Proposition~\ref{prop:key:small}]
Since $\Vert Y(0)\Vert\leq r$, we know that $\tau>0$. 
Fix some $\mathcal{T}_{\text{rec}}^{U}\leq t_{0}\leq t_{1}$,
such that $t_{1}-t_{0}\leq\frac{1}{2\Vert H_{\gamma}\Vert}$.
Then, for every $t\in[t_{0},t_{1}]$,
\begin{equation*}
\Vert Y(t)\Vert
\leq\left\Vert e^{(t_{1}-t)H_{\gamma}}Q_{t_{0}}(t_{1})Z_{t}\right\Vert
\leq e^{\frac{1}{2}}\left\Vert Q_{t_{0}}(t_{1})Z_{t}\right\Vert.
\end{equation*}
Recall that $\gamma<2\sqrt{m}$. 
Similar to the derivations in \eqref{bound:two:terms}, we get
\begin{align}
\mathbb{P}(\tau\in[t_{0},t_{1}])
&\leq
\mathbb{P}\left(\sup_{t_{0}\leq t\leq t_{1}\wedge\tau}\frac{\Vert Q_{t_{0}}(t_{1})Z_{t}^{0}\Vert}
{\varepsilon+re^{-\sqrt{m}(1-\hat{\varepsilon}) t}}\geq c_{0},\tau\geq t_{0}\right)
\nonumber
\\
&\qquad\qquad\qquad
+\mathbb{P}\left(\sup_{t_{0}\leq t\leq t_{1}\wedge\tau}\frac{\Vert Q_{t_{0}}(t_{1})Z_{t}^{1}\Vert}
{\varepsilon+re^{-\sqrt{m}(1-\hat{\varepsilon}) t}}\geq c_{1},\tau\geq t_{0}\right),
\label{bound:two:terms:small}
\end{align}
where $c_{0}+c_{1}=\frac{1}{2}$ and $c_{0},c_{1}>0$.

First, we show that the second term in \eqref{bound:two:terms:small} is zero. 
On the event $\tau\in[t_{0},t_{1}]$, for any $0\leq s\leq t_{1}\wedge\tau$,
we have
\begin{equation*}
\Vert\rho(Y(s))\Vert\leq\frac{L}{2}\Vert Y(s)\Vert^{2}
\leq\frac{L}{2}\left(\varepsilon+re^{-\sqrt{m}(1-\hat{\varepsilon}) s}\right)^{2}.
\end{equation*}
Therefore, for any $t\in[t_{0},t_{1}\wedge\tau]$, since $\gamma<2\sqrt{m}$,
by Lemma \ref{lem:keylemma}, we get
\begin{align*}
\left\Vert Q_{t_{0}}(t_{1})Z_{t}^{1}\right\Vert
&\leq\int_{0}^{t}\left\Vert e^{ (s-t_{1}) H_{\gamma}}\right\Vert\cdot\Vert\rho(Y(s))\Vert ds
\\
&\leq\frac{C_{\hat{\varepsilon}}L}{2}\int_{0}^{t}e^{ (s-t_{1})\sqrt{m}(1-\hat{\varepsilon})}\left(\varepsilon+re^{-\sqrt{m}(1-\hat{\varepsilon}) s}\right)^{2}ds
\\
&\leq C_{\hat{\varepsilon}}L\int_{0}^{t}e^{(s-t)\sqrt{m}(1-\hat{\varepsilon})}\left(\varepsilon^{2}+r^{2}e^{-2\sqrt{m}(1-\hat{\varepsilon})s}\right)ds
\\
&\leq\frac{C_{\hat{\varepsilon}}L}{\sqrt{m}(1-\hat{\varepsilon})}\left(\varepsilon^{2}+r^{2}e^{-\sqrt{m}(1-\hat{\varepsilon}) t}\right)
\\
&\leq\frac{C_{\hat{\varepsilon}}L}{\sqrt{m}(1-\hat{\varepsilon})}\varepsilon^{2}\left(1+\frac{1}{64C_{\hat{\varepsilon}}^{2}}\right),
\end{align*}
since $t\geq t_{0}\geq\mathcal{T}_{\text{rec}}^{U}=\frac{2}{\sqrt{m}(1-\hat{\varepsilon})}\log\left(\frac{8rC_{\hat{\varepsilon}}}{\varepsilon}\right)$. 
Consequently, if we take $c_{1}=\frac{C_{\hat{\varepsilon}}L}{\sqrt{m}(1-\hat{\varepsilon})}\varepsilon(1+\frac{1}{64C_{\hat{\varepsilon}}^{2}})$, 
then, 
\begin{equation*}
\sup_{t_{0}\leq t\leq t_{1}\wedge\tau}\frac{\Vert Q_{t_{0}}(t_{1})Z_{t}\Vert}{\varepsilon+re^{-\sqrt{m}(1-\hat{\varepsilon}) t}}
\leq\frac{1}{\varepsilon}\sup_{t_{0}\leq t\leq t_{1}\wedge\tau}\Vert Q_{t_{0}}(t_{1})Z_{t}\Vert\leq c_{1},
\end{equation*}
which implies that 
\begin{equation*}
\mathbb{P}\left(\sup_{t_{0}\leq t\leq t_{1}\wedge\tau}\frac{\Vert Q_{t_{0}}(t_{1})Z_{t}^{1}\Vert}
{\varepsilon+re^{-\sqrt{m}(1-\hat{\varepsilon}) t}}\geq c_{1},\tau\geq t_{0}\right)=0.
\end{equation*}
Moreover, $c_{0}=\frac{1}{2}-c_{1}=\frac{1}{2}-\frac{C_{\hat{\varepsilon}}L}{\sqrt{m}(1-\hat{\varepsilon})}(1+\frac{1}{64C_{\hat{\varepsilon}}^{2}})
\varepsilon>\frac{1}{4}$ since it is assumed $\varepsilon <\frac{\sqrt{m}(1-\hat{\varepsilon})}{4C_{\hat{\varepsilon}}L(1+\frac{1}{64C_{\hat{\varepsilon}}^{2}})}$. 

Second, we apply Lemma \ref{lem:ineq:small} to bound the first term in \eqref{bound:two:terms:small}.
By using $V(0)=0$ and $\Vert Y(0)\Vert\leq r$ and the definition
of $\mu_{t_{1}}$ and $\Sigma_{t_{1}}$ in \eqref{eqn:mu:t} and \eqref{eqn:Sigma:t},
we get
\begin{align*}
\left\langle\mu_{t_{1}},(I-\beta\theta\Sigma_{t_{1}})^{-1}\mu_{t_{1}}\right\rangle
&=
\left\langle e^{-t_{1}H_{\gamma}}(V(0),Y(0))^{T},(I-\beta\theta\Sigma_{t_{1}})^{-1}e^{-t_{1}H_{\gamma}}(V(0),Y(0))^{T}\right\rangle
\\
&\leq\frac{1}{1-\frac{\theta\gamma C_{\hat{\varepsilon}}^{2}}{\sqrt{m}(1-\hat{\varepsilon})}}C_{\hat{\varepsilon}}^{2}e^{-2\sqrt{m}(1-\hat{\varepsilon})t_{1}}r^{2}
\leq
2C_{\hat{\varepsilon}}^{2}e^{-2\sqrt{m}(1-\hat{\varepsilon})t_{1}}r^{2},
\end{align*}
by choosing $\theta=\frac{1}{2}\gamma^{-1}C_{\hat{\varepsilon}}^{-2}\sqrt{m}(1-\hat{\varepsilon})$,
Finally, since 
\begin{equation*}
t_{1}\geq\mathcal{T}_{\text{rec}}^{U}=\frac{2}{\sqrt{m}(1-\hat{\varepsilon})}\log\left(8rC_{\hat{\varepsilon}}/\varepsilon\right)
\geq
\frac{1}{\sqrt{m}(1-\hat{\varepsilon})}\log\left(8rC_{\hat{\varepsilon}}/\varepsilon\right),
\end{equation*}
so that $2C_{\hat{\varepsilon}}^{2}e^{-2\sqrt{m}(1-\hat{\varepsilon})t_{1}}r^{2} \le \frac{1}{32}\varepsilon^{2}$, and thus
\begin{equation*}
\left\langle\mu_{t_{1}},(I-\beta\theta_{t_{1}}\Sigma_{t_{1}})^{-1}\mu_{t_{1}}\right\rangle\leq\frac{1}{32}\varepsilon^{2}.
\end{equation*}
Then with the choice of $h=(\varepsilon+re^{-\sqrt{m}(1-\hat{\varepsilon}) t_{1}})c_{0}$
and $\theta=\frac{1}{2}\gamma^{-1}C_{\hat{\varepsilon}}^{-2}\sqrt{m}(1-\hat{\varepsilon})$ in Lemma \ref{lem:ineq}, 
and notice that $h=(\varepsilon+re^{-\sqrt{m}(1-\hat{\varepsilon}) t_{1}})c_{0}\geq\varepsilon c_{0}$
we get
\begin{align*}
&\mathbb{P}\left(\sup_{t_{0}\leq t\leq t_{1}\wedge\tau}\frac{\Vert Q_{t_{0}}(t_{1})Z_{t}^{0}\Vert}
{\varepsilon+re^{-\sqrt{m}(1-\hat{\varepsilon}) t}}\geq c_{0},\tau\geq t_{0}\right)
\\
&\leq
\mathbb{P}\left(\sup_{t_{0}\leq t\leq t_{1}}\Vert Q_{t_{0}}(t_{1})Z_{t}^{0}\Vert
\geq(\varepsilon+re^{-\sqrt{m}(1-\hat{\varepsilon}) t_{1}})c_{0}\right)
\\
&
\leq
2^{d}
\cdot
\exp\left(-\frac{\beta\gamma^{-1}C_{\hat{\varepsilon}}^{-2}\sqrt{m}(1-\hat{\varepsilon})}{4}[h^{2}-\langle\mu_{t_{1}},(I-\beta\theta\Sigma_{t_{1}})^{-1}\mu_{t_{1}}\rangle]\right)
\\
&\leq
2^{d}\cdot\exp\left(-\frac{\beta\gamma^{-1}C_{\hat{\varepsilon}}^{-2}\sqrt{m}(1-\hat{\varepsilon})\varepsilon^{2}}{4}
\left(c_{0}^{2}-\frac{1}{32}\right)\right)
\leq
2^{d}\cdot\exp\left(-\frac{\beta\gamma^{-1}C_{\hat{\varepsilon}}^{-2}\sqrt{m}(1-\hat{\varepsilon})\varepsilon^{2}}{128}\right).
\end{align*}
Thus for any $t_{0}\geq\mathcal{T}_{\text{rec}}^{U}$ and $t_{0}\leq t_{1}\leq t_{0}+\frac{1}{2\Vert H_{\gamma}\Vert}$,  
\begin{equation*}
\mathbb{P}(\tau\in[t_{0},t_{1}])
\leq
2^{d}\cdot\exp\left(-\frac{\beta\gamma^{-1}C_{\hat{\varepsilon}}^{-2}\sqrt{m}(1-\hat{\varepsilon})\varepsilon^{2}}{128}
\right).
\end{equation*}

Fix any $\mathcal{T}>0$ and recall the definition of the escape time $\mathcal{T}_{\text{esc}}^{U}=\mathcal{T}+\mathcal{T}_{\text{rec}}^{U}$.
Partition the interval $[\mathcal{T}_{\text{rec}}^{U},\mathcal{T}_{\text{esc}}^{U}]$ using the points
$\mathcal{T}_{\text{rec}}^{U}=t_{0}<t_{1}<\cdots<t_{\lceil 2\Vert H_{\gamma}\Vert \mathcal{T}\rceil}=\mathcal{T}_{\text{esc}}^{U}$
with $t_{j}=j/(2\Vert H_{\gamma}\Vert)$, then we have
\begin{align*}
&\mathbb{P}\left(\tau\in\left[\mathcal{T}_{\text{rec}}^{U},\mathcal{T}_{\text{esc}}^{U}\right]\right)
\\
&=\sum_{j=0}^{\lceil 2\Vert H_{\gamma}\Vert \mathcal{T}\rceil}\mathbb{P}(\tau\in[t_{j},t_{j+1}])
\leq(2\Vert H_{\gamma}\Vert \mathcal{T}+1)\cdot 2^{d}
\cdot\exp\left(-\frac{\beta\gamma^{-1}C_{\hat{\varepsilon}}^{-2}\sqrt{m}(1-\hat{\varepsilon})\varepsilon^{2}}{128}\right)\leq\delta,
\end{align*}
provided that
\begin{equation*}
\beta\geq\frac{128\gamma C_{\hat{\varepsilon}}^{2}}{\sqrt{m}(1-\hat{\varepsilon})\varepsilon^{2}}\left(d\log(2)+\log\left(\frac{2\Vert H_{\gamma}\Vert \mathcal{T}+1}{\delta}\right)\right).
\end{equation*}
The proof is complete.
\end{proof}


\section{Generalization to population risk}\label{sec:population}

In this section, we apply Theorem \ref{MainThm1}, Theorem \ref{MainThm1:small} 
and Theorem \ref{MainThm1:Q} to study the population risk.
We recall that the population risk is denoted by $\overline{F}$,
and the empirical risk is denoted by $F$.
First, we need the assumption
that the population risk $\overline{F}$ is $(2\varepsilon_{0},2m)$-strongly Morse (see e.g. \cite{MBM}),
that is,
$\left\Vert\nabla \overline{F}(x)\right\Vert\leq 2\varepsilon_{0}$
implies
$\min_{j\in[d]}\left|\lambda_{j}(\nabla^{2}\overline{F}(x))\right|\geq 2m$.

\subsection{Underdamped Langevin dynamics}

\begin{theorem}\label{MainThm2}
Suppose the assumptions in Theorem \ref{MainThm1} holds for $\gamma=2\sqrt{m}$ case
and the assumptions in Theorem \ref{MainThm1:small} holds for $\gamma<2\sqrt{m}$ case.
Assume that $\frac{n}{\log n}\geq\frac{c\sigma_{0}^{2}d}{(\varepsilon_{0}\wedge m)^{2}}$
and $\varepsilon\leq\frac{3m}{(1+\frac{\sqrt{m}}{8\sqrt{(C_{H}+2)m+(m+1)^{2}}})L}$ 
for the case $\gamma=2\sqrt{m}$
and $\varepsilon\leq\frac{3m}{(1+\frac{1}{8C_{\hat{\varepsilon}}})L}$ for the case $\gamma<2\sqrt{m}$.
With probability at least $1-\delta$, w.r.t. both the training data $z$ and the Gaussian noise, 
for any local minimum $x_{\ast}^{z}$ of $F$
\footnote{With the notation $x_{\ast}^{z}$ emphasizing the dependence on the training
data $z$}, either $\Vert X_{k}-x_{\ast}^{z}\Vert\geq\varepsilon/2$
for some $k\leq\lceil\eta^{-1}\mathcal{T}_{\text{esc}}^{U}\rceil$ or
\begin{equation*}
\overline{F}(x_{\ast}^{z})\leq\min_{\eta^{-1}\mathcal{T}_{\text{rec}}^{U}\leq k\leq\eta^{-1}\mathcal{T}_{\text{esc}}^{U}}F(X_{k})
+\sigma_{0}\sqrt{\frac{cd\log n}{n}},
\end{equation*}
where
\begin{align}
&c:=c_{0}(1\vee\log((M\vee L\vee(B+MR))R\sigma_{0}/\delta)),\label{eqn:c}
\\
&\sigma_{0}:=(A+(B+MR)R)\vee(B+MR)\vee(C+LR).\label{eqn:sigma}
\end{align}
\end{theorem}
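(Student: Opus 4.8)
The plan is to reduce the population-risk statement (Theorem \ref{MainThm2}) to the already-proved empirical-risk statements (Theorem \ref{MainThm1} for $\gamma = 2\sqrt{m}$ and Theorem \ref{MainThm1:small} for $\gamma < 2\sqrt{m}$) via a uniform concentration argument, exactly following the template of Theorem~3 in \cite{pmlr-v75-tzen18a}. First I would set up the two relevant events. Apply Theorem \ref{MainThm1} (or \ref{MainThm1:small}) to obtain, with probability at least $1-\delta/2$ over the Gaussian noise conditionally on the data $z$, that for any local minimum $x_\ast^z$ of $F$, either event (1) holds---so $\|X_k - x_\ast^z\| \geq \frac{1}{2}(\varepsilon + r e^{-\sqrt m k\eta}) \geq \varepsilon/2$ for some $k \leq \eta^{-1}\mathcal{T}_{\mathrm{esc}}^U$, which already gives the first alternative in the conclusion---or event (2) holds, so $\|X_k - x_\ast^z\| \leq \varepsilon + r e^{-\sqrt m k\eta}$ for all $\eta^{-1}\mathcal{T}_{\mathrm{rec}}^U \leq k \leq \eta^{-1}\mathcal{T}_{\mathrm{esc}}^U$. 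In the latter case, once $k \geq \eta^{-1}\mathcal{T}_{\mathrm{rec}}^U$ the definition of $\mathcal{T}_{\mathrm{rec}}^U$ forces $r e^{-\sqrt m k\eta}$ to be $\mathcal{O}(\varepsilon)$ (indeed at $k\eta = \mathcal{T}_{\mathrm{rec}}^U$ we have $re^{-\sqrt m\,\mathcal{T}_{\mathrm{rec}}^U}$ controlled by a small multiple of $\varepsilon$ via the Lambert-$W$ equation $\mathcal{T}_{\mathrm{rec}}^U e^{-\sqrt m \mathcal{T}_{\mathrm{rec}}^U} = \varepsilon^2/(8r^2\sqrt{C_H+2+(m+1)^2})$), so $\|X_k - x_\ast^z\| \leq c_1 \varepsilon$ for an explicit absolute constant, and I would absorb this into the statement by rescaling $\varepsilon$.

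Second I would invoke a uniform generalization bound to pass from $F$ to $\overline F$. The key fact, as in \cite{pmlr-v75-tzen18a} and \cite{MBM}, is that under the stated assumption $\tfrac{n}{\log n} \geq \tfrac{c\sigma_0^2 d}{(\varepsilon_0 \wedge m)^2}$ with $c,\sigma_0$ as in \eqref{eqn:c}--\eqref{eqn:sigma}, a uniform concentration inequality (a covering-number / chaining argument over $\mathbb{R}^d$ combined with the sub-Gaussian tails of $f(\cdot,z) - \overline F(\cdot)$ and $\nabla f(\cdot,z) - \nabla\overline F(\cdot)$ and $\nabla^2 f(\cdot,z) - \nabla^2\overline F(\cdot)$) holds with probability at least $1-\delta/2$ over $z$: simultaneously for all $x$ in the relevant ball (of radius $\mathcal{O}(R)$, which contains all iterates by the uniform $L^2$ bounds and all local minima by dissipativity), one has $|F(x) - \overline F(x)| \leq \sigma_0\sqrt{cd\log n / n}$, $\|\nabla F(x) - \nabla \overline F(x)\| \leq \sigma_0\sqrt{cd\log n/n} \leq \varepsilon_0$, and $\|\nabla^2 F(x) - \nabla^2\overline F(x)\| \leq m$. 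Combining the last two with the $(2\varepsilon_0, 2m)$-strongly-Morse assumption on $\overline F$: at a local minimum $x_\ast^z$ of $F$ we have $\nabla F(x_\ast^z) = 0$, hence $\|\nabla \overline F(x_\ast^z)\| \leq \varepsilon_0 \leq 2\varepsilon_0$, so $\min_j |\lambda_j(\nabla^2\overline F(x_\ast^z))| \geq 2m$, and since $\|\nabla^2 F(x_\ast^z) - \nabla^2\overline F(x_\ast^z)\| \leq m$, the Hessian $\nabla^2 F(x_\ast^z)$ is positive definite with smallest eigenvalue at least $m$---this is precisely why $m$ (the dissipativity/strong-convexity constant used throughout Sections \ref{sec:quadratic}--\ref{sec:underdamped}) is a valid lower bound on the curvature of $F$ at $x_\ast^z$, legitimizing the application of Theorem \ref{MainThm1}.

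Third I would chain the estimates: on the intersection of the two good events (total failure probability $\leq \delta$), in the second alternative, pick $k^\star := \arg\min_{\eta^{-1}\mathcal{T}_{\mathrm{rec}}^U \leq k \leq \eta^{-1}\mathcal{T}_{\mathrm{esc}}^U} F(X_k)$; then
\begin{align*}
\overline F(x_\ast^z)
&\leq \overline F(X_{k^\star})
\leq F(X_{k^\star}) + |F(X_{k^\star}) - \overline F(X_{k^\star})| \\
&\leq \min_{\eta^{-1}\mathcal{T}_{\mathrm{rec}}^U \leq k \leq \eta^{-1}\mathcal{T}_{\mathrm{esc}}^U} F(X_k) + \sigma_0\sqrt{\frac{cd\log n}{n}},
\end{align*}
where the first inequality uses that $x_\ast^z$ is close to $X_{k^\star}$ (within $\varepsilon$) and that $\overline F$ has Lipschitz gradient with $\nabla\overline F(x_\ast^z)$ small plus positive-definite Hessian near $x_\ast^z$, so $\overline F(x_\ast^z) \leq \overline F(X_{k^\star}) + \mathcal{O}(\varepsilon^2 L + \varepsilon\varepsilon_0)$---the extra condition $\varepsilon \leq \tfrac{3m}{(1 + \sqrt m/(8\sqrt{(C_H+2)m + (m+1)^2}))L}$ (resp.\ $\varepsilon \leq \tfrac{3m}{(1+1/(8C_{\hat\varepsilon}))L}$) is exactly what makes this second-order remainder negligible relative to the stated bound, and in fact makes $x_\ast^z$ the \emph{unique} local minimum of $F$ in the $\varepsilon$-neighborhood so the argmin is well-defined.

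I expect the main obstacle to be setting up the uniform concentration bound cleanly over all of $\mathbb{R}^d$ (or the relevant $\mathcal{O}(R)$-ball) rather than a fixed compact set: one must show the iterates and all candidate local minima stay in a ball of radius $\mathcal{O}(R)$ with high probability---the former via the uniform $L^2$ bounds (Lemma \ref{lem:L2bound}) plus a union bound over $\mathcal{O}(\eta^{-1}\mathcal{T}_{\mathrm{esc}}^U)$ time steps, the latter via dissipativity of $F$---and then invoke the Morse/covering-number machinery of \cite{MBM} with the sub-Gaussian parameter $\sigma_0$ from \eqref{eqn:sigma} and the sample-size condition to control gradients, Hessians, and function values simultaneously. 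The rest is bookkeeping: tracking how the two $\delta/2$'s combine, verifying that the $\varepsilon$-rescaling in step one does not violate the hypotheses $\varepsilon < \overline\varepsilon^U$ of Theorem \ref{MainThm1}, and checking that the curvature-lower-bound deduction is consistent with the constant $m$ used in the definitions of $\mathcal{T}_{\mathrm{rec}}^U$, $\overline\varepsilon^U$, $\overline\eta^U$, $\underline\beta^U$. This mirrors Theorem~3 and its proof in \cite{pmlr-v75-tzen18a} almost verbatim, the only difference being that the recurrence/escape time-scales and the threshold constants come from our Theorems \ref{MainThm1} and \ref{MainThm1:small} instead of the overdamped result.
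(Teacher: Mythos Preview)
Your overall architecture---apply Theorem~\ref{MainThm1}/\ref{MainThm1:small} for the dynamics, invoke uniform deviation (Lemma~\ref{lem:unif-deviation}) plus the strongly-Morse transfer (Lemma~\ref{lem:eigen}) for generalization---matches the paper. The gap is in your chaining step. You write
\[
\overline{F}(x_\ast^z)\ \le\ \overline{F}(X_{k^\star})\ \le\ F(X_{k^\star}) + \sigma_0\sqrt{cd\log n/n},
\]
but the first inequality is not true: $x_\ast^z$ is a local minimum of $F$, not of $\overline{F}$. You patch this by saying it holds up to $\mathcal{O}(\varepsilon^2 L + \varepsilon\varepsilon_0)$, and then claim this remainder is ``negligible relative to the stated bound''. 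That claim is unjustified: the theorem's error budget is exactly $\sigma_0\sqrt{cd\log n/n}$, and nothing in the hypotheses relates $\varepsilon$ or $\varepsilon_0$ to $\sigma_0\sqrt{cd\log n/n}$, so you would end up with an extra additive term not present in the statement.

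The paper avoids this by decomposing at $F(x_\ast^z)$ instead of $\overline{F}(X_{k^\star})$:
\[
\overline{F}(x_\ast^z) - \min_{K_0\le k\le K} F(X_k)
= \bigl[\overline{F}(x_\ast^z)-F(x_\ast^z)\bigr] + \bigl[F(x_\ast^z)-\min_{K_0\le k\le K} F(X_k)\bigr].
\]
The first bracket is bounded by the uniform deviation at the single point $x_\ast^z$. The second bracket is shown to be $\le 0$ \emph{exactly}: since the Hessian of $F$ is $L$-Lipschitz and $x_\ast^z$ is a genuine critical point of $F$, one has $F(X_k)\ge F(x_\ast^z)+\tfrac12\|X_k-x_\ast^z\|_H^2 - \tfrac{L}{6}\|X_k-x_\ast^z\|^3 \ge F(x_\ast^z)+\tfrac{m}{2}\|X_k-x_\ast^z\|^2 - \tfrac{L}{6}\|X_k-x_\ast^z\|^3$, and on event~(2) one bounds $\|X_k-x_\ast^z\|\le\bigl(1+\tfrac{\sqrt m}{8\sqrt{(C_H+2)m+(m+1)^2}}\bigr)\varepsilon$; the stated condition on $\varepsilon$ is precisely $\tfrac{L}{6}\cdot(\text{that constant})\cdot\varepsilon \le \tfrac{m}{2}$, so the cubic is dominated by the quadratic and the bracket is nonpositive. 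This is why the bound in the theorem has no extra $\varepsilon$-dependent term. Your worry about localizing the concentration to an $\mathcal{O}(R)$-ball is also unnecessary: Lemma~\ref{lem:unif-deviation} already gives the sup over all of $\mathbb{R}^d$.
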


\begin{proof}
Let us first assume that $\gamma=2\sqrt{m}$. 
Let $x_{\ast}^{z}$ be a local minimum of the empirical risk $F$.
By Lemma \ref{lem:eigen}, all eigenvalues of
of the Hessian $H=\nabla^{2}F(x_{\ast}^{z})$ are at least $m$
and therefore the norm $\Vert\cdot\Vert_{H}=\Vert H^{1/2}\cdot\Vert$
is well defined and
$\Vert\cdot\Vert_{H}\geq\sqrt{m}\Vert\cdot\Vert$.

We can decompose (letting $K_{0}=\lceil\eta^{-1}\mathcal{T}_{\text{rec}}^{U}\rceil$
and $K=\lceil\eta^{-1}\mathcal{T}_{\text{esc}}^{U}\rceil$)
\begin{equation*}
\overline{F}(x_{\ast}^{z})-\min_{K_{0}\leq k\leq K}F(X_{k})
=\left(\overline{F}(x_{\ast}^{z})-F(x_{\ast}^{z})\right)
+\left(F(x_{\ast}^{z})-\min_{K_{0}\leq k\leq K}F(X_{k})\right).
\end{equation*}
From Lemma~\ref{lem:unif-deviation}, we know with probability at least $1- \delta$,
\begin{equation*}
\overline{F}(x_{\ast}^{z})-F(x_{\ast}^{z})
\leq\sigma_{0}\sqrt{(cd/n)\log n}.
\end{equation*}
In addition, we can infer from \eqref{eq:hessian-L} that  for any $x$,
\begin{equation*}
\left|F(x)-F(x_{\ast}^{z})-\frac{1}{2}\Vert x-x_{\ast}^{z}\Vert_{H}^{2}\right|
\leq\frac{L}{6}\Vert x-x_{\ast}^{z}\Vert^{3}, 
\end{equation*}
Hence, we obtain
\begin{align*}
F(x_{\ast}^{z})-\min_{K_{0}\leq k\leq K}F(X_{k})
&=\max_{K_{0}\leq k\leq K}(F(x_{\ast}^{z})-F(X_{k}))
\\
&\leq\max_{K_{0}\leq k\leq K}
\left(\frac{L}{6}\Vert X_{k}-x_{\ast}^{z}\Vert^{3}-\frac{1}{2}\Vert X_{k}-x_{\ast}^{z}\Vert_{H}^{2}\right)
\\
&\leq\max_{K_{0}\leq k\leq K}
\left(\frac{L}{6}\Vert X_{k}-x_{\ast}^{z}\Vert^{3}-\frac{m}{2}\Vert X_{k}-x_{\ast}^{z}\Vert^{2}\right).
\end{align*}
By Theorem \ref{MainThm1}, with probability $1-\delta$,
either $\Vert X_{k}-x_{\ast}^{z}\Vert\geq\varepsilon/2$
for some $k\leq \lceil\eta^{-1}\mathcal{T}_{\text{rec}}^{U}\rceil$ 
or 
\begin{align}
\Vert X_{k}-x_{\ast}^{z}\Vert
\leq\varepsilon+re^{-\sqrt{m}\mathcal{T}_{\text{rec}}^{U}}
&=\varepsilon+\frac{\varepsilon^{2}}{\mathcal{T}_{\text{rec}}^{U}8r\sqrt{C_{H}+2+(m+1)^{2}}}
\nonumber
\\
&\leq\varepsilon+\frac{\varepsilon\sqrt{m}}{8\sqrt{(C_{H}+2)m+(m+1)^{2}}},\label{eqn:2e}
\end{align}
for all $K_{0}\leq k\leq K$, 
where we used the definition of $\mathcal{T}_{\text{rec}}^{U}$ for the case $\gamma=2\sqrt{m}$ in \eqref{rec:time:U}, and the assumption
$\varepsilon<\sqrt{\frac{C_{H}+2+(m+1)^{2}}{(C_{H}+2)m+(m+1)^{2}}}r$ and
the property $\mathcal{T}_{\text{rec}}^{U}>\frac{1}{\sqrt{m}}$. 
If the latter occurs, then by \eqref{eqn:2e}, we have
\begin{align*}
&F(x_{\ast}^{z})-\min_{K_{0}\leq k\leq K}F(X_{k})
\\
&\leq\max_{K_{0}\leq k\leq K}
\left(\frac{L}{6}\Vert X_{k}-x_{\ast}^{z}\Vert^{3}-\frac{m}{2}\Vert X_{k}-x_{\ast}^{z}\Vert^{2}\right)
\\
&\leq\max_{K_{0}\leq k\leq K}
\Vert X_{k}-x_{\ast}^{z}\Vert^{2}\left(\frac{L}{6}\cdot\left(1+\frac{\sqrt{m}}{8\sqrt{(C_{H}+2)m+(m+1)^{2}}}\right)\varepsilon-\frac{m}{2}\right)
\leq 0\,,
\end{align*}
for $\varepsilon\leq\frac{3m}{(1+\frac{\sqrt{m}}{8\sqrt{(C_{H}+2)m+(m+1)^{2}}})L}$.  
The proof for the case $\gamma=2\sqrt{m}$ is therefore complete. 

The proof for the case $\gamma<2\sqrt{m}$ is similar.
The only difference is that we replace \eqref{eqn:2e} by
the following estimate
\begin{equation*}
\Vert X_{k}-x_{\ast}^{z}\Vert
\leq\varepsilon+re^{-\sqrt{m}(1-\hat{\varepsilon})\mathcal{T}_{\text{rec}}^{U}}
\leq\varepsilon+re^{-\frac{1}{2}\sqrt{m}(1-\hat{\varepsilon})\mathcal{T}_{\text{rec}}^{U}}
=\varepsilon+\frac{\varepsilon}{8C_{\hat{\varepsilon}}},
\end{equation*}
for all $K_{0}\leq k\leq K$, 
where we used the definition of $\mathcal{T}_{\text{rec}}^{U}$ 
for the $\gamma<2\sqrt{m}$ case in \eqref{rec:time:U:small}. 
\end{proof}

\subsection{Non-reversible Langevin dynamics}

\begin{theorem}\label{MainThm2:Q}
Suppose the assumptions in Theorem \ref{MainThm1:Q} holds.
Assume that $\frac{n}{\log n}\geq\frac{c\sigma_{0}^{2}d}{(\varepsilon_{0}\wedge m)^{2}}$
and $\varepsilon\leq\frac{3m}{(1+\frac{1}{8C_{J}(\tilde{\varepsilon})})L}$.
With probability at least $1-\delta$, w.r.t. both the training data $Z$ and the Gaussian noise, 
for any local minimum $x_{\ast}^{z}$ of $F$, either $\Vert X_{k}-x_{\ast}^{z}\Vert\geq\varepsilon/2$
for some $k\leq\eta^{-1}\mathcal{T}_{\text{rec}}^{J}$ or
\begin{equation*}
\overline{F}(x_{\ast}^{z})\leq\min_{\eta^{-1}\mathcal{T}_{\text{rec}}^{J}\leq k\leq\eta^{-1}\mathcal{T}_{\text{esc}}^{J}}F(X_{k})
+\sigma_{0}\sqrt{\frac{cd\log n}{n}},
\end{equation*}
where $c$ and $\sigma_{0}$ are defined in \eqref{eqn:c} and \eqref{eqn:sigma}.
\end{theorem}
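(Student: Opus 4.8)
The plan is to mirror the proof of Theorem~\ref{MainThm2}, replacing the use of Theorem~\ref{MainThm1} by Theorem~\ref{MainThm1:Q} and the underdamped recurrence time by $\mathcal{T}_{\text{rec}}^{J}$ from \eqref{rec:time:J}. First I would fix a local minimum $x_{\ast}^{z}$ of the empirical risk $F$. Using the strong Morse assumption on the population risk $\overline{F}$ together with the sample-size condition $\frac{n}{\log n}\geq\frac{c\sigma_{0}^{2}d}{(\varepsilon_{0}\wedge m)^{2}}$, Lemma~\ref{lem:eigen} (exactly as in the proof of Theorem~\ref{MainThm2}) guarantees that every eigenvalue of $H:=\nabla^{2}F(x_{\ast}^{z})$ is at least $m$; hence $H$ is positive definite, the norm $\|\cdot\|_{H}:=\|H^{1/2}\cdot\|$ is well defined with $\|\cdot\|_{H}\geq\sqrt{m}\,\|\cdot\|$, and Theorem~\ref{MainThm1:Q} is applicable at this local minimum.

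Writing $K_{0}=\lceil\eta^{-1}\mathcal{T}_{\text{rec}}^{J}\rceil$ and $K=\lceil\eta^{-1}\mathcal{T}_{\text{esc}}^{J}\rceil$, I would decompose
\begin{equation*}
\overline{F}(x_{\ast}^{z})-\min_{K_{0}\leq k\leq K}F(X_{k})
=\bigl(\overline{F}(x_{\ast}^{z})-F(x_{\ast}^{z})\bigr)+\Bigl(F(x_{\ast}^{z})-\min_{K_{0}\leq k\leq K}F(X_{k})\Bigr).
\end{equation*}
The first summand is handled by the uniform deviation estimate Lemma~\ref{lem:unif-deviation}: with probability at least $1-\delta$ over the data, $\overline{F}(x_{\ast}^{z})-F(x_{\ast}^{z})\leq\sigma_{0}\sqrt{(cd/n)\log n}$. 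For the second summand, the $L$-Lipschitzness of the Hessian in \eqref{eq:hessian-L} gives $\bigl|F(x)-F(x_{\ast}^{z})-\tfrac12\|x-x_{\ast}^{z}\|_{H}^{2}\bigr|\leq\tfrac{L}{6}\|x-x_{\ast}^{z}\|^{3}$, so that, using $\|\cdot\|_{H}^{2}\geq m\|\cdot\|^{2}$,
\begin{equation*}
F(x_{\ast}^{z})-\min_{K_{0}\leq k\leq K}F(X_{k})
\leq\max_{K_{0}\leq k\leq K}\Bigl(\tfrac{L}{6}\|X_{k}-x_{\ast}^{z}\|^{3}-\tfrac{m}{2}\|X_{k}-x_{\ast}^{z}\|^{2}\Bigr).
\end{equation*}

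Then I would invoke Theorem~\ref{MainThm1:Q}: with probability at least $1-\delta$ over the Gaussian noise, either $\|X_{k}-x_{\ast}^{z}\|\geq\varepsilon/2$ for some $k\leq K_{0}$ — the first alternative of the statement — or $\|X_{k}-x_{\ast}^{z}\|\leq\varepsilon+re^{-m_{J}(\tilde{\varepsilon})k\eta}$ for all $K_{0}\leq k\leq K$. In the second case I would bound $re^{-m_{J}(\tilde{\varepsilon})k\eta}\leq re^{-m_{J}(\tilde{\varepsilon})\mathcal{T}_{\text{rec}}^{J}}\leq\varepsilon/(8C_{J}(\tilde{\varepsilon}))$ using the definition \eqref{rec:time:J} of $\mathcal{T}_{\text{rec}}^{J}$ (paralleling the bound $re^{-\sqrt{m}(1-\hat{\varepsilon})\mathcal{T}_{\text{rec}}^{U}}\leq\varepsilon/(8C_{\hat{\varepsilon}})$ used in the proof of Theorem~\ref{MainThm2} for the small-friction underdamped case), hence $\|X_{k}-x_{\ast}^{z}\|\leq\varepsilon\bigl(1+\tfrac{1}{8C_{J}(\tilde{\varepsilon})}\bigr)$ on the escape window. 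The hypothesis $\varepsilon\leq\frac{3m}{(1+\frac{1}{8C_{J}(\tilde{\varepsilon})})L}$ then forces $\tfrac{L}{6}\|X_{k}-x_{\ast}^{z}\|-\tfrac{m}{2}\leq0$ for these $k$, so the displayed maximum is nonpositive, and combining with the data estimate yields $\overline{F}(x_{\ast}^{z})\leq F(x_{\ast}^{z})+\sigma_{0}\sqrt{(cd/n)\log n}\leq\min_{K_{0}\leq k\leq K}F(X_{k})+\sigma_{0}\sqrt{(cd/n)\log n}$. A union bound over the data event and the noise event (rescaling $\delta$ as in the proof of Theorem~\ref{MainThm2}) completes the proof.

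The argument is essentially a transcription of the proof of Theorem~\ref{MainThm2}, so I do not anticipate a conceptual obstacle; the main work is bookkeeping. The delicate point is matching the numerology: one must verify that $re^{-m_{J}(\tilde{\varepsilon})\mathcal{T}_{\text{rec}}^{J}}$ is at most $\varepsilon/(8C_{J}(\tilde{\varepsilon}))$ under the definition of $\mathcal{T}_{\text{rec}}^{J}$ and the admissible range of $\varepsilon$ (note $C_{J}(\tilde{\varepsilon})\geq1$ enters both), and that the thresholds $\overline{\varepsilon}^{J}$, $\overline{\eta}^{J}$, $\underline{\beta}^{J}$ inherited from Theorem~\ref{MainThm1:Q} are consistent with the extra constraint $\varepsilon\leq\frac{3m}{(1+\frac{1}{8C_{J}(\tilde{\varepsilon})})L}$ imposed here.
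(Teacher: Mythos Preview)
Your proposal is correct and follows essentially the same route as the paper: the paper's proof is a brief pointer to the proof of Theorem~\ref{MainThm2} (and Theorem~3 in \cite{pmlr-v75-tzen18a}), noting only that the estimate \eqref{eqn:2e} is replaced by $\Vert X_{k}-x_{\ast}^{z}\Vert\leq\varepsilon+re^{-m_{J}(\tilde{\varepsilon})\mathcal{T}_{\text{rec}}^{J}}\leq\varepsilon+re^{-\frac{1}{2}m_{J}(\tilde{\varepsilon})\mathcal{T}_{\text{rec}}^{J}}=\varepsilon+\varepsilon/(8C_{J}(\tilde{\varepsilon}))$, exactly the bound you isolate as the key bookkeeping step.
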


\begin{proof}
The proof is similar to Theorem \ref{MainThm2} and Theorem 3 in \cite{pmlr-v75-tzen18a}.
The only difference is that we replace \eqref{eqn:2e} by
the following estimate
\begin{equation*}
\Vert X_{k}-x_{\ast}^{z}\Vert
\leq\varepsilon+re^{-m_{J}(\tilde{\varepsilon})\mathcal{T}_{\text{rec}}^{J}}
\leq\varepsilon+re^{-\frac{1}{2}m_{J}(\tilde{\varepsilon})\mathcal{T}_{\text{rec}}^{J}}
=\varepsilon+\frac{\varepsilon}{8C_{J}(\tilde{\varepsilon})},
\end{equation*}
for all $\lceil\eta^{-1}\mathcal{T}_{\text{rec}}^{J}\rceil\leq k
\leq\lceil\eta^{-1}\mathcal{T}_{\text{esc}}^{J}\rceil$, where we used
the definition of $\mathcal{T}_{\text{rec}}^{J}$. 
\end{proof}

\section{Supporting technical lemmas}
\begin{lemma}\label{lem:H:gamma} 
Consider the square matrix $H_\gamma$ defined by \eqref{def-Hgamma0}. We have 
    $$\| H_\gamma \| \leq \sqrt{\gamma^{2} + M^{2} + 1}. $$
\end{lemma}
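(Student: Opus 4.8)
The statement to prove is the simple spectral-norm bound $\|H_\gamma\| \leq \sqrt{\gamma^2 + M^2 + 1}$ for the block matrix
$H_\gamma = \begin{bmatrix} \gamma I & H \\ -I & 0 \end{bmatrix}$, where $H$ is symmetric positive definite with largest eigenvalue $M$. The plan is to bound $\|H_\gamma\|$ by $\|H_\gamma\|_F$ — no, that would give a dimension-dependent bound. Instead, the cleanest route is to estimate $\|H_\gamma\|^2 = \|H_\gamma^T H_\gamma\|$ directly via a block computation, or, even more simply, to use the submultiplicativity/triangle-inequality decomposition of $H_\gamma$ into pieces whose norms are transparent.

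First I would write $H_\gamma = A + B$ where $A = \begin{bmatrix} \gamma I & 0 \\ 0 & 0 \end{bmatrix}$ and $B = \begin{bmatrix} 0 & H \\ -I & 0 \end{bmatrix}$. Then $\|A\| = \gamma$. For $B$, note $B^T B = \begin{bmatrix} 0 & -I \\ H & 0 \end{bmatrix}\begin{bmatrix} 0 & H \\ -I & 0 \end{bmatrix} = \begin{bmatrix} I & 0 \\ 0 & H^2 \end{bmatrix}$, which is block diagonal with norm $\max\{1, M^2\}$, so $\|B\| = \max\{1, M\}$. The triangle inequality then gives $\|H_\gamma\| \leq \gamma + \max\{1,M\}$, which unfortunately is a slightly weaker bound than the claimed $\sqrt{\gamma^2 + M^2 + 1}$ (e.g. when $\gamma = M = 1$ it gives $2$ versus $\sqrt 3$). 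So the triangle-inequality route is not quite sharp enough; I would instead compute $H_\gamma^T H_\gamma$ as a single block matrix.

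The main computation is therefore: $H_\gamma^T H_\gamma = \begin{bmatrix} \gamma I & -I \\ H & 0 \end{bmatrix}\begin{bmatrix} \gamma I & H \\ -I & 0 \end{bmatrix} = \begin{bmatrix} \gamma^2 I + I & \gamma H \\ \gamma H & H^2 \end{bmatrix}$. I would then diagonalize $H = QDQ^T$ and observe that conjugating by $\mathrm{diag}(Q,Q)$ reduces $H_\gamma^T H_\gamma$ to a block matrix which, after a permutation (grouping the $i$-th coordinate of each block), becomes block-diagonal with $2\times 2$ blocks $\begin{bmatrix} \gamma^2 + 1 & \gamma \lambda_i \\ \gamma \lambda_i & \lambda_i^2 \end{bmatrix}$. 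The largest eigenvalue of such a $2\times 2$ symmetric matrix is at most its trace $\gamma^2 + 1 + \lambda_i^2$ (since it is positive semidefinite, each eigenvalue is at most the trace). Taking the max over $i$ and using $\lambda_i \leq M$ gives $\|H_\gamma^T H_\gamma\| \leq \gamma^2 + 1 + M^2$, hence $\|H_\gamma\| \leq \sqrt{\gamma^2 + M^2 + 1}$.

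There is no real obstacle here — the only thing to be careful about is the permutation/unitary-equivalence bookkeeping, which is already spelled out in the proof of Lemma~\ref{lem:keylemma} in the excerpt (the same permutation matrix $P$ and the block structure $T_\gamma = P G_\gamma P^T$), so I would simply cite that reduction. The step I would double-check is the bound "eigenvalue $\leq$ trace for a PSD matrix," applied to the $2\times 2$ blocks, which is immediate since both eigenvalues are nonnegative and sum to the trace.
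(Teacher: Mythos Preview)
Your proposal is correct and follows essentially the same route as the paper: both reduce $\|H_\gamma\|$ to a maximum over $2\times 2$ blocks via the block-diagonalization/permutation from Lemma~\ref{lem:keylemma}, and both bound the largest eigenvalue of the resulting $2\times 2$ PSD matrix by its trace $\gamma^2 + \lambda_i^2 + 1$. The only cosmetic differences are that the paper works with $T_i(\gamma) T_i(\gamma)^T$ (yielding $\begin{bmatrix} \gamma^2+\lambda_i^2 & -\gamma \\ -\gamma & 1 \end{bmatrix}$) rather than your $T_i(\gamma)^T T_i(\gamma)$, and writes out the quadratic-formula eigenvalue before bounding it, whereas your direct ``eigenvalue $\le$ trace'' shortcut is slightly cleaner.
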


\begin{proof} 
It follows from \eqref{eq-perm-diagonalize} that 
   \beq \| H_\gamma\| = \| T_\gamma \|  = \max_i \|T_i(\gamma)\|.
    \label{eq-H-gamma-norm}
   \eeq
We also compute 
\begin{equation*}
     \|T_i(\gamma)\|^2 = \lambda_{\max} \left( T_i(\gamma) T_i(\gamma)^T \right) = \lambda_{\max}\left( 
     \begin{bmatrix} \gamma^2 +\lambda_i^2 & -\gamma \\
                     -\gamma               &    1
     \end{bmatrix}
     \right),
\end{equation*}
where $\lambda_{\max}$ denotes the largest real part of the eigenvalues. This leads to 
\begin{equation*}
     \|T_i(\gamma)\|^2 = \frac{\gamma^2 +\lambda_i^2 +1 + \sqrt{(\gamma^2 +\lambda_i^2 +1)^2 - 4\lambda_i^2 } }{2} \leq \gamma^2 +\lambda_i^2 +1.
\end{equation*}
Since $m\leq \lambda_i \leq M$ for every $i$, we obtain
\begin{equation*}
     \max_i \|T_i(\gamma)\|^2 \leq \max_i \left( \gamma^2 +\lambda_i^2 +1 \right) = \gamma^2 +M^2 +1.
\end{equation*}
We conclude from \eqref{eq-H-gamma-norm}.
\end{proof}

\begin{lemma}\label{lem:Brownian}
Let $B_{t}$ be a standard $d$-dimensional Brownian motion.
For any $u>0$ and any $t_{1}>t_{0}\geq 0$ with $t_{1}-t_{0}=\eta>0$, we have
\begin{equation*}
\mathbb{P}\left(\sup_{t\in[t_{0},t_{1}]}\Vert B_{t}-B_{t_{1}}\Vert\geq u\right)
\leq
2^{1/4}e^{1/4}e^{-\frac{u^{2}}{4d\eta}}.  
\end{equation*}
\end{lemma}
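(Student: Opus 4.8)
The plan is to reduce the statement to a maximal inequality for a standard $d$-dimensional Brownian motion run over an interval of length $\eta$, and then apply an exponential submartingale bound. First, since Brownian motion has stationary increments and is invariant under time reversal on a finite interval, the process $s \mapsto B_{t_1} - B_{t_1 - s}$ for $s \in [0,\eta]$ is again a standard $d$-dimensional Brownian motion; calling it $W$, we obtain the distributional identity
\[
\sup_{t \in [t_0,t_1]} \| B_t - B_{t_1} \| \;\stackrel{d}{=}\; \sup_{s \in [0,\eta]} \| W_s \|,
\]
so it suffices to bound $\mathbb{P}\big( \sup_{s\in[0,\eta]} \|W_s\| \geq u \big)$.

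For any $\theta > 0$ the function $x \mapsto e^{\theta \|x\|^2}$ on $\mathbb{R}^d$ is convex, hence $\big( e^{\theta \|W_s\|^2} \big)_{s\in[0,\eta]}$ is a nonnegative submartingale (Jensen's inequality applied to the vector martingale $W$), and it is integrable throughout $[0,\eta]$ as long as $2\theta\eta < 1$. Doob's maximal inequality for nonnegative submartingales (passing to the supremum over the whole interval by path continuity) then gives
\[
\mathbb{P}\Big( \sup_{s\in[0,\eta]} \|W_s\| \geq u \Big)
= \mathbb{P}\Big( \sup_{s\in[0,\eta]} e^{\theta\|W_s\|^2} \geq e^{\theta u^2} \Big)
\leq e^{-\theta u^2}\, \mathbb{E}\big[ e^{\theta \|W_\eta\|^2} \big].
\]
The coordinates of $W_\eta$ are i.i.d.\ $\mathcal{N}(0,\eta)$, so $\|W_\eta\|^2$ is $\eta$ times a chi-squared variable with $d$ degrees of freedom, whence $\mathbb{E}\big[ e^{\theta\|W_\eta\|^2} \big] = (1 - 2\theta\eta)^{-d/2}$ for $2\theta\eta < 1$.

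It remains to choose $\theta$; taking $\theta = \tfrac{1}{4 d \eta}$ (which satisfies $2\theta\eta = \tfrac{1}{2d} \leq \tfrac12$) yields
\[
\mathbb{P}\Big( \sup_{s\in[0,\eta]} \|W_s\| \geq u \Big) \leq \Big( 1 - \tfrac{1}{2d} \Big)^{-d/2} e^{-u^2/(4 d \eta)},
\]
and the proof is finished by the scalar estimate $\big( 1 - \tfrac{1}{2d} \big)^{-d/2} \leq 2^{1/4} e^{1/4}$, valid for every integer $d \geq 1$. This last inequality is elementary: it is equivalent to $\big(1 - \tfrac{1}{2d}\big)^{2d} \geq \tfrac{1}{2e}$, which follows from the classical bound $\big( 1 - \tfrac1n \big)^{n-1} \geq e^{-1}$ with $n = 2d$, since then $\big(1 - \tfrac{1}{2d}\big)^{2d} \geq e^{-1}\big(1 - \tfrac{1}{2d}\big) \geq \tfrac{1}{2e}$.

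There is no substantive obstacle here; the only points needing care are (i) the integrability condition $2\theta\eta < 1$, which both dictates and is met by the choice $\theta = 1/(4d\eta)$, and (ii) the concrete bound on the prefactor $(1-\tfrac{1}{2d})^{-d/2}$. A coordinatewise alternative using the reflection principle also works but produces a worse exponent than the stated $u^2/(4d\eta)$, so the exponential-submartingale route above is the natural one.
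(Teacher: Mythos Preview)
Your proof is correct and follows essentially the same approach as the paper: reduce to $\sup_{s\in[0,\eta]}\|W_s\|$ via time reversal/stationarity, apply Doob's inequality to the submartingale $e^{\theta\|W_s\|^2}$, compute the Gaussian moment generating function, choose $\theta=1/(4d\eta)$, and bound the prefactor $(1-\tfrac{1}{2d})^{-d/2}$. The only cosmetic difference is in the last step: the paper substitutes $1-\tfrac{1}{2d}=\tfrac{1}{1+x}$ and bounds $(1+x)^{1/4}(1+x)^{1/(4x)}\leq 2^{1/4}e^{1/4}$, whereas you equivalently reduce to $(1-\tfrac{1}{2d})^{2d}\geq\tfrac{1}{2e}$ via the classical inequality $(1-\tfrac{1}{n})^{n-1}\geq e^{-1}$.
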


\begin{proof}
Also, by the time reversibility, stationarity of time increments of Brownian motion
and Doob's martingale inequality, for any $\theta>0$ so that $2\theta\eta<1$, we have
\begin{align*}
\mathbb{P}\left(\sup_{t\in[t_{0},t_{1}]}\Vert B_{t}-B_{t_{1}}\Vert\geq u\right)
&=\mathbb{P}\left(\sup_{t\in[0,\eta]}\Vert B_{t}-B_{0}\Vert\geq u\right)
\\
&\leq
e^{-\theta u^{2}}\mathbb{E}\left[e^{\theta\Vert B_{\eta}-B_{0}\Vert^{2}}\right]
\\
&=e^{-\theta u^{2}}(1-2\theta\eta)^{-d/2}.
\end{align*}
By choosing $\theta=1/(4d\eta)$, we get
\begin{equation*}
\mathbb{P}\left(\sup_{t\in[t_{0},t_{1}]}\Vert B_{t}-B_{t_{1}}\Vert\geq u\right)
\leq
\left(1-\frac{1}{2d}\right)^{-\frac{d}{2}}e^{-\frac{u^{2}}{4d\eta}}.
\end{equation*}
Note that for any $x>0$, $(1+\frac{1}{x})^{x}<e$. 
Let us define $x>0$ via
\begin{equation*}
1-\frac{1}{2d}=\frac{1}{1+x}.
\end{equation*}
Then, we get $d=\frac{1+x}{2x}$ and $x=\frac{1}{1-\frac{1}{2d}}-1\leq 1$, and
\begin{equation*}
\left(1-\frac{1}{2d}\right)^{-\frac{d}{2}}
=\left(\frac{1}{1+x}\right)^{-\frac{1+x}{4x}}
=(1+x)^{\frac{1}{4}}(1+x)^{\frac{1}{4x}}
\leq 2^{1/4}e^{1/4}.
\end{equation*}
Hence,
\begin{equation*}
\mathbb{P}\left(\sup_{t\in[t_{0},t_{1}]}\Vert B_{t}-B_{t_{1}}\Vert\geq u\right)
\leq
2^{1/4}e^{1/4}e^{-\frac{u^{2}}{4d\eta}}. 
\end{equation*}
\end{proof}

\begin{lemma}[See Lemma 2 in \cite{Raginsky}] \label{lem:gradient-bound}
If parts $(i)$ and $(ii)$ of Assumption \ref{assumptions} hold, 
then for all $x\in\mathbb{R}^{d}$ and $z\in\mathcal{Z}$,
\begin{equation*}
\Vert\nabla f(x,z)\Vert
\leq M\Vert x\Vert+B,
\end{equation*}
and
\begin{equation*}
\frac{m}{3}\Vert x\Vert^{2}
-\frac{b}{2}\log 3
\leq f(x,z)
\leq\frac{M}{2}\Vert x\Vert^{2}+B\Vert x\Vert+ A.
\end{equation*}
\end{lemma}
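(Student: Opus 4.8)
The plan is to establish the three inequalities one at a time, in each case parametrizing by the ray $t \mapsto tx$, $t \in [0,1]$, and applying the fundamental theorem of calculus; this mirrors the argument for Lemma~2 in \cite{Raginsky}. Fix $z \in \mathcal{Z}$ throughout and set $g(t) := f(tx,z)$, so that $g'(t) = \langle x, \nabla f(tx,z)\rangle$.

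For the gradient bound I would use only the triangle inequality together with the $M$-Lipschitz continuity of $\nabla f(\cdot,z)$ from Assumption~\ref{assumptions}(ii): $\Vert \nabla f(x,z)\Vert \le \Vert \nabla f(x,z) - \nabla f(0,z)\Vert + \Vert \nabla f(0,z)\Vert \le M\Vert x\Vert + B$, where the last step invokes $\Vert \nabla f(0,z)\Vert \le B$ from Assumption~\ref{assumptions}(i). For the upper bound, write $f(x,z) = f(0,z) + \int_0^1 \langle x, \nabla f(tx,z)\rangle\,dt$, bound $|f(0,z)| \le A$ and, via Cauchy--Schwarz and the gradient bound just proved, $\langle x, \nabla f(tx,z)\rangle \le \Vert x\Vert(Mt\Vert x\Vert + B)$, then integrate the resulting polynomial in $t$ to obtain $f(x,z) \le A + \tfrac{M}{2}\Vert x\Vert^2 + B\Vert x\Vert$. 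Both of these steps are routine.

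The main obstacle is the lower bound. Here I would use the dissipativity estimate $\langle x, \nabla f(x,z)\rangle \ge m\Vert x\Vert^2 - b$ (imposed on each $f(\cdot,z)$ in the setting of \cite{Raginsky}), which after rescaling the argument gives $g'(t) = \tfrac{1}{t}\langle tx, \nabla f(tx,z)\rangle \ge m t\Vert x\Vert^2 - b/t$ for $t > 0$. The subtlety is that the term $b/t$ is not integrable at $t = 0$, so one cannot integrate all the way from the origin; instead, I would integrate from a cutoff $t_0 \in (0,1)$ up to $1$, giving $f(x,z) - f(t_0 x,z) = \int_{t_0}^1 g'(t)\,dt \ge \tfrac{m\Vert x\Vert^2}{2}(1 - t_0^2) - b\log(1/t_0)$, and then drop $f(t_0 x,z) \ge 0$ using non-negativity of $f$ from Assumption~\ref{assumptions}(i). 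Choosing $t_0 = 1/\sqrt{3}$, so that $1 - t_0^2 = 2/3$ and $\log(1/t_0) = \tfrac12\log 3$, yields precisely $f(x,z) \ge \tfrac{m}{3}\Vert x\Vert^2 - \tfrac{b}{2}\log 3$, completing the proof.
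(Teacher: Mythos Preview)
Your argument is correct and is exactly the standard proof of this fact, as in \cite[Lemma~2]{Raginsky}; the paper does not supply its own proof but simply cites that reference. One small point worth flagging, which you already noted parenthetically: the lower bound requires dissipativity of each $f(\cdot,z)$ (as in Raginsky's assumptions), whereas Assumption~\ref{assumptions}(iii) here imposes it only on the empirical risk $F$, and the lemma's hypothesis as stated mentions only parts~(i) and~(ii); this is a minor inconsistency in the paper's statement rather than a flaw in your proof.
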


\begin{lemma}[Lemma 6 in \cite{pmlr-v75-tzen18a}, Uniform Deviation Guarantees] \label{lem:unif-deviation}
Under $(i)$ and $(ii)$ of Assumption \ref{assumptions},
there exists an absolute constant $c_{0}$ such
that for:
\begin{align*}
&c:=c_{0}(1\vee\log((M\vee L\vee(B+MR))R\sigma_{0}/\delta)),
\\
&\sigma_{0}:=(A+(B+MR)R)\vee(B+MR)\vee(C+LR),
\end{align*}
we have, if $n\geq cd\log d$, then with probability at least $1-\delta$:
\begin{align*}
&\sup_{x\in\mathbb{R}^{d}}\left|F(x)-\overline{F}(x)\right|\leq\sigma_{0}\sqrt{\frac{cd\log n}{n}},
\\
&\sup_{x\in\mathbb{R}^{d}}\left\Vert\nabla F(x)-\nabla \overline{F}(x)\right\Vert\leq\sigma_{0}\sqrt{\frac{cd\log n}{n}},
\\
&\sup_{x\in\mathbb{R}^{d}}\left\Vert\nabla^{2}F(x)-\nabla^{2}\overline{F}(x)\right\Vert
\leq\sigma_{0}\sqrt{\frac{cd\log n}{n}}.
\end{align*}
\end{lemma}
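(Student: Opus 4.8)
The plan is to prove the three uniform deviation bounds by the standard empirical-process route used for Lemma~6 of \cite{pmlr-v75-tzen18a}: record pointwise envelope and Lipschitz bounds for $f$, $\nabla f$, $\nabla^2 f$; control each centered average at a single point by a concentration inequality; and then uniformize by a covering (net) argument, with the constant $c$ and the scale $\sigma_0$ emerging from the bookkeeping. First I would collect the consequences of Assumption~\ref{assumptions}(i)--(ii) together with Lemma~\ref{lem:gradient-bound}: since $\nabla f(\cdot,z)$ is $M$-Lipschitz one has $\|\nabla^2 f(x,z)\|\le M$ for all $x,z$, and $\|\nabla^2 f(x,z)\|\le C+L\|x\|$, while $\|\nabla f(x,z)\|\le B+M\|x\|$ and $|f(x,z)|\le A+B\|x\|+\tfrac M2\|x\|^2$. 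Writing $\Delta_F(x):=F(x)-\overline F(x)$, $\Delta_\nabla(x):=\nabla F(x)-\nabla\overline F(x)$ and $\Delta_H(x):=\nabla^2F(x)-\nabla^2\overline F(x)$, each is an average of $n$ i.i.d.\ centered terms; $\Delta_H$ has the \emph{uniformly bounded} envelope $\|\Delta_H(x)\|\le 2M$ and is $2L$-Lipschitz in operator norm, $\Delta_\nabla$ is $2M$-Lipschitz, and $\Delta_F$ has locally Lipschitz increments with constant $2(B+M\|x\|)$. Moreover, by Taylor's theorem $\Delta_\nabla(x)=\Delta_\nabla(0)+\int_0^1\Delta_H(tx)x\,dt$ and $\Delta_F(x)=\Delta_F(0)+\langle\Delta_\nabla(0),x\rangle+\int_0^1(1-t)\langle x,\Delta_H(tx)x\rangle\,dt$, so on any ball $\{\|x\|\le\rho\}$ all three suprema are governed by $|\Delta_F(0)|$, $\|\Delta_\nabla(0)\|$, and $\sup_{\|x\|\le\rho}\|\Delta_H(x)\|$. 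The relevant radius is $\rho=\mathcal{O}(R)$ with $R=\sqrt{b/m}$, which is where dissipativity forces every local minimum and the Langevin trajectory to live and is precisely why $\sigma_0$ is written in terms of $R$: the definition $\sigma_0:=(A+(B+MR)R)\vee(B+MR)\vee(C+LR)$ is, up to absolute constants, an upper bound for the envelopes of $f$, $\nabla f$, $\nabla^2 f$ on this ball.

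Next I would establish the pointwise estimates. For fixed $x$, $\Delta_F(x)$ is an average of $n$ i.i.d.\ variables bounded in magnitude by $2\bar f(x)$ with $\bar f(x)\lesssim\sigma_0$ on the ball, so Hoeffding's inequality gives $|\Delta_F(x)|\lesssim\sigma_0\sqrt{\log(1/\delta)/n}$ with probability $1-\delta$; a vector Hoeffding/Bernstein bound handles $\Delta_\nabla(x)$ at the same scale, and a matrix Bernstein inequality handles $\Delta_H(x)$, giving $\|\Delta_H(x)\|\lesssim M\sqrt{\log(d/\delta)/n}\lesssim\sigma_0\sqrt{\log(d/\delta)/n}$ (this is the only step that is genuinely matrix-valued, and it is where the dimension $d$ enters; using a Bernstein rather than a Hoeffding bound keeps the rate proportional to $\sigma_0$ rather than its square). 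To uniformize, take a minimal $\epsilon$-net $\mathcal N_\epsilon$ of the ball of radius $\rho=\mathcal{O}(R)$, so $|\mathcal N_\epsilon|\le(3\rho/\epsilon)^d$; apply the three pointwise estimates over $\mathcal N_\epsilon$ with per-point failure probability $\delta/(3|\mathcal N_\epsilon|)$ and union-bound, which costs an extra $\log|\mathcal N_\epsilon|=d\log(3\rho/\epsilon)$ inside each concentration exponent; then pass from $\mathcal N_\epsilon$ to the whole ball using the Lipschitz constants from the first step. Choosing $\epsilon\asymp n^{-1/2}$ makes the net cardinality contribute $d\log\big(\mathcal{O}(\rho\sqrt n)\big)=\mathcal{O}(d\log n)$ and makes the discretization error $\mathcal{O}(\epsilon)$ of lower order, so each of the three quantities is at most $\sigma_0\sqrt{\mathcal{O}(d\log n)\cdot(1\vee\log(\cdot/\delta))/n}$; absorbing the logarithmic-in-$1/\delta$ and Lipschitz factors into $c=c_{0}(1\vee\log((M\vee L\vee(B+MR))R\sigma_{0}/\delta))$ gives exactly $\sigma_0\sqrt{cd\log n/n}$, and the hypothesis $n\ge cd\log d$ guarantees we are in the regime where this bound is nontrivial (and where the net argument is informative).

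The main obstacle is the tension between the quadratic/linear \emph{growth} of the envelopes of $f$ and $\nabla f$ and the demand for a uniform-in-$x$ bound: one must either restrict (as above) to the $\mathcal{O}(R)$-ball that all downstream uses of the lemma actually require --- which is what the $R$-dependence of $\sigma_0$ encodes --- or run the covering argument at geometrically increasing radii and verify that the contributions telescope. A secondary but real difficulty is balancing the three parameters --- the net resolution $\epsilon$, the radius $\rho$, and the per-point failure level --- so that the final exponent is $\Theta(d\log n)$ and is not contaminated by stray factors of $d$, $\log(1/\delta)$, or $\log\rho$ in the wrong place; the Taylor reduction of the first step (so that only $\Delta_H$ needs a dimension-sensitive matrix bound) together with Bernstein-type inequalities (so that the variance proxy, not the squared envelope, sets the rate) is what keeps the accounting clean.
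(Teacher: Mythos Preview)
The paper does not prove this lemma: it is quoted verbatim from \cite{pmlr-v75-tzen18a} (their Lemma~6, itself drawing on \cite{MBM}) and used as a black box in Section~\ref{sec:population}. So there is no ``paper's own proof'' to compare against; your sketch is the standard empirical-process argument that underlies the cited result.

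Your outline is broadly sound, and you correctly flag the one genuine subtlety: the statement as written claims suprema over all of $\mathbb{R}^d$, yet $f(\cdot,z)$ grows quadratically and $\nabla f(\cdot,z)$ linearly, so the envelopes are not globally bounded by $\sigma_0$. The Taylor reduction you describe --- controlling $\Delta_F$ and $\Delta_\nabla$ through $\Delta_F(0)$, $\Delta_\nabla(0)$, and $\sup\|\Delta_H\|$ --- only yields bounds that \emph{grow} with $\|x\|$, not the uniform constant the lemma asserts. The resolution in \cite{MBM,pmlr-v75-tzen18a} is essentially the one you suggest: the stated bounds are really meant on the ball of radius $\mathcal{O}(R)$ where all local minima and all iterates live (this is exactly why $\sigma_0$ is written in terms of $R$), and all downstream applications in Section~\ref{sec:population} only evaluate at such points. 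If you want a literal proof of the statement over $\mathbb{R}^d$, your ``geometrically increasing radii'' alternative does not recover a uniform bound independent of $\|x\|$; rather, one should read the lemma as shorthand for the ball version, which your covering-plus-Bernstein argument handles cleanly.
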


\begin{lemma}[Proposition 7 in \cite{pmlr-v75-tzen18a}]\label{lem:eigen}
If the population risk $\overline{F}(x)$ is $(2\varepsilon_{0},2m)$-strongly Morse,
then provided that $n\geq cd\log d$ and $\frac{n}{d\log n}\geq\frac{c\sigma_{0}^{2}}{(\varepsilon_{0}\wedge m)^{2}}$,
the empirical risk $F(x)$ is $(\varepsilon_{0},m)$-strongly Morse with probability 
at least $1-\delta$.
\end{lemma}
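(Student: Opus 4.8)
The statement to prove is Lemma \ref{lem:eigen} (Proposition 7 in \cite{pmlr-v75-tzen18a}), which asserts that strong Morse-ness transfers from the population risk $\overline F$ to the empirical risk $F$ with high probability under the given sample-size conditions.

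\medskip

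\textbf{Plan of the proof.} The strategy is to use the uniform deviation bounds from Lemma \ref{lem:unif-deviation} to control the discrepancy between $F$ and $\overline F$ in both gradient and Hessian, and then to argue pointwise that the defining property of being strongly Morse is preserved under small perturbations. First I would invoke Lemma \ref{lem:unif-deviation}: provided $n \geq cd\log d$, with probability at least $1-\delta$ we have simultaneously $\sup_x \Vert \nabla F(x) - \nabla \overline F(x)\Vert \leq \sigma_0\sqrt{(cd\log n)/n}$ and $\sup_x \Vert \nabla^2 F(x) - \nabla^2 \overline F(x)\Vert \leq \sigma_0\sqrt{(cd\log n)/n}$. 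The extra hypothesis $\frac{n}{d\log n} \geq \frac{c\sigma_0^2}{(\varepsilon_0\wedge m)^2}$ is exactly what forces $\sigma_0\sqrt{(cd\log n)/n} \leq \varepsilon_0 \wedge m$, so on this high-probability event both the gradient error and the Hessian error are at most $\varepsilon_0 \wedge m \leq \min\{\varepsilon_0, m\}$.

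\medskip

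\textbf{Key steps.} Work on the above event. Fix any $x$ with $\Vert \nabla F(x)\Vert \leq \varepsilon_0$; we must show $\min_{j\in[d]}|\lambda_j(\nabla^2 F(x))| \geq m$. By the triangle inequality, $\Vert \nabla \overline F(x)\Vert \leq \Vert \nabla F(x)\Vert + \Vert \nabla F(x) - \nabla \overline F(x)\Vert \leq \varepsilon_0 + \varepsilon_0 = 2\varepsilon_0$. Since $\overline F$ is $(2\varepsilon_0, 2m)$-strongly Morse, this yields $\min_{j\in[d]}|\lambda_j(\nabla^2 \overline F(x))| \geq 2m$. Next, by Weyl's inequality (the eigenvalues of a symmetric matrix are $1$-Lipschitz in the spectral norm of the perturbation), each eigenvalue of $\nabla^2 F(x)$ differs from the corresponding ordered eigenvalue of $\nabla^2 \overline F(x)$ by at most $\Vert \nabla^2 F(x) - \nabla^2 \overline F(x)\Vert \leq m$. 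Because every eigenvalue $\lambda$ of $\nabla^2\overline F(x)$ has $|\lambda|\geq 2m$, i.e.\ lies in $(-\infty,-2m]\cup[2m,\infty)$, a perturbation of size at most $m$ keeps it in $(-\infty,-m]\cup[m,\infty)$; hence $\min_{j\in[d]}|\lambda_j(\nabla^2 F(x))|\geq m$. This establishes that $F$ is $(\varepsilon_0,m)$-strongly Morse on the event of probability at least $1-\delta$. Combining the two sample-size conditions $n\geq cd\log d$ and $\frac{n}{d\log n}\geq \frac{c\sigma_0^2}{(\varepsilon_0\wedge m)^2}$ completes the argument.

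\medskip

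\textbf{Main obstacle.} There is essentially no deep obstacle here: the proof is a short concatenation of the uniform concentration estimate (which is cited, not re-proved) with the elementary perturbation fact that eigenvalues move continuously and at controlled rate under spectral-norm perturbations (Weyl). The one point requiring a little care is bookkeeping the constants so that the hypothesis $\frac{n}{d\log n}\geq \frac{c\sigma_0^2}{(\varepsilon_0\wedge m)^2}$ indeed gives $\sigma_0\sqrt{(cd\log n)/n}\leq \varepsilon_0\wedge m$, and ensuring the factor-of-two slack in the strong-Morse parameters ($2\varepsilon_0\to\varepsilon_0$ and $2m\to m$) is consistent with the two separate uses of the deviation bound (once for the gradient, once for the Hessian). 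Since these slacks are built into the statement, the verification is routine. Thus I would present the proof as: apply Lemma \ref{lem:unif-deviation}, deduce the two error bounds are $\leq \varepsilon_0\wedge m$, then run the triangle-inequality-plus-Weyl argument pointwise.
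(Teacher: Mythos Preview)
Your proposal is correct. The paper does not give its own proof of this lemma: it is stated as a direct citation of Proposition~7 in \cite{pmlr-v75-tzen18a} and used as a black box. Your argument---invoke the uniform deviation bounds of Lemma~\ref{lem:unif-deviation}, use the sample-size condition to make the gradient and Hessian errors at most $\varepsilon_0\wedge m$, then apply the triangle inequality on the gradient and Weyl's inequality on the Hessian eigenvalues---is exactly the standard proof and is what the cited reference does.
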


\end{document}